\def\Rep{{\rm Rep}}
\def\ac{{\mathcal{AC}}}
\def\Schub{{\mathcal S}}
\def\lleq{\prec}
\def\Orb{{\mathcal O}}
\def\longratto{\longrightarrow} 
\def\revddots{\mathinner{\mkern1mu\raise\p@\vbox{\kern7\p@\hbox{.}}\mkern2mu\raise4\p@\hbox{.}\mkern2mu\raise7\p@\hbox{.}\mkern1mu}}
\newcommand\eqref[1]{(\ref{#1})}
\newenvironment{proof}{{\noindent\bf Proof.}}{\hfill $\square$}
\newcounter{paragrafsubsub}[subsubsection]
\renewcommand{\theparagrafsubsub}{%
\thesubsubsection.\roman{paragrafsubsub}}
\newcommand{\paragrafsubsub}{%
\refstepcounter{paragrafsubsub}
{\bf \theparagrafsubsub}\hspace{0.2em}--- }
\newcounter{paragrafsub}[subsection]
\renewcommand{\theparagrafsub}{\thesubsection.\arabic{paragrafsub}}
\newcommand{\paragrafsub}{%
\refstepcounter{paragrafsub}
{\bf \theparagrafsub}\hspace{0.2em}--- }
\newcounter{paragraf}[section]
\renewcommand{\theparagraf}{\thesection.\arabic{paragraf}}
\newcommand{\paragraf}{%
\refstepcounter{paragraf}
{\bf \theparagraf}\hspace{0.2em}--- }
\newcommand\paragraphe{%
\par \indent
\ifcase\value{subsection} %
\paragraf
\else
\ifcase\value{subsubsection}\paragrafsub %
\else\paragrafsubsub
\fi\fi
}
\def\longto{\longrightarrow}
\def\div{{\rm div}}
\def\Part{{\mathcal P}}\def\lr{{\mathcal LR}}
\def\Sch{{\mathcal S}}
\def\PP{{\mathbb P}}
\def\RR{{\mathbb R}}\def\QQ{{\mathbb Q}}\def\ZZ{{\mathbb Z}}\def\NN{{\mathbb N}}
\def\CC{{\mathbb C}}
\def\Fl{{\mathcal Fl}}\def\Face{{\mathcal F}}
\def\Gr{{\mathbb G}}
\def\Sl{{\rm Sl}}
\def\IG{{\Gr_\omega}}
\def\SO{{\rm SO}}
\def\Pic{\rm Pic}
\def\base{{\mathcal B}}
\def\diag{{\rm diag}}
\def\lh{{\mathfrak h}}\def\lu{{\mathfrak u}}
\def\lu{{\mathfrak u}}
\def\lt{{\mathfrak t}}
\def\kbprod{{\odot_0}}
\def\SL{{\rm SL}}\def\Sp{{\rm Sp}}
\def\SO{{\rm SO}}
\def\Hom{{\rm Hom}}
\def\tr{{\rm tr}}
\def\Li{{\mathcal{L}}}
\def\Mi{{\mathcal{M}}}
\def\quot{/\hspace{-.5ex}/}
\def\GL{{\rm GL}}
\newtheorem{lemma}{Lemma}
\newtheorem{prop}{Proposition}
\newtheorem{theo}{Theorem}
\newtheorem{coro}{Corollary}
\newtheorem{defin}{Definition}
\newtheorem{remark}{Remark}
\begin{document}
\title{A cohomology free description of eigencones\\ 
in type A, B and C}
\author{N. Ressayre}

\maketitle
\begin{abstract}
Let $K$ be a compact connected  Lie group.
The triples $(\Orb_1,\,\Orb_2,\,\Orb_3)$ of adjoint $K$-orbits such that 
 $\Orb_1+\Orb_2+\Orb_3$ contains $0$ are parametrized by a closed
 convex polyhedral cone, called the eigencone of $K$.
For $K$ simple of type $A$, $B$ or $C$ we give an inductive cohomology free parametrization 
of the minimal set of linear inequalities which characterizes the
eigencone of $K$.
\end{abstract}

\section{Introduction}

\paragraphe
We first explain the Horn conjecture which answers the following elementary
question:
\begin{center}
What can be said about the eigenvalues of a sum of two Hermitian matrices, 
in terms of the eigenvalues of the summands?
\end{center}

If $A$ is a Hermitian $n$ by $n$ matrix, we will denote by 
$\lambda(A)=(\lambda_1\geq\cdots\geq\lambda_n)\in \RR^n$ its spectrum. 
Consider the following set:
$$
{\rm Horn}_\RR(n)=\{(\lambda(A),\lambda(B),\lambda(C))\in\RR^{3n}\,:\,
\begin{array}{l}

A, B, C {\rm\ are\ 3\ Hermitian\ matrices}\\
{\rm s.t.\ }A+B+C=0\}.  
\end{array}
$$
It turns out that ${\rm Horn}_\RR(n)$ is a closed convex polyhedral cone
in $\RR^{3n}$. 
We now want to explain the Horn conjecture which describes inductively a  list of linear inequalities which 
characterizes this cone.
Let $\Part(r,n)$ denote the set of parts of $\{1,\cdots,n\}$ with $r$ elements. 
Let $I=\{i_1<\cdots<i_r\}\in\Part(r,n)$. 
We set:
$\lambda^I =(i_r- r, i_{r-1}- (r- 1), \cdots , i_2- 2, i_1- 1).$
We will denote by $1^r$ the vector $(1,\cdots,1)$ in $\RR^r$.

\begin{theo}\label{th:Hornconj}
  Let $(\lambda,\mu,\nu)$ be a triple of non-increasing sequences of $n$ real numbers.
Then, $(\lambda,\mu,\nu)\in{\rm Horn}_\RR(n)$ if and only if 
\begin{eqnarray}
  \label{eq:trace}
  \sum_i\lambda_i+\sum_j\mu_j+\sum_k\nu_k=0
\end{eqnarray}
 and
for any $r=1,\cdots,n-1$, for any $(I,J,K)\in \Part(r,n)^3$ such that 
\begin{eqnarray}
  \label{eq:induction}
(\lambda^I,\lambda^J,\lambda^K-2(n-r)1^r)\in{\rm Horn}_\RR(r),
\end{eqnarray}

we have:
\begin{eqnarray}
  \label{eq:ineqIJK}
\sum_{i\in I}\lambda_i+\sum_{j\in J}\mu_j+\sum_{k\in K}\nu_k\leq 0.
\end{eqnarray}
\end{theo}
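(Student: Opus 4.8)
The plan is to deduce Theorem~\ref{th:Hornconj} from the Hermitian eigenvalue problem via the standard dictionary with the representation theory of $\GL_n$ and, ultimately, from the general theory of the $G$-ample cone / eigencone for $G=\GL_n$ (or $\SL_n$). First I would recall that ${\rm Horn}_\RR(n)$ is the eigencone of the unitary group $\UN(n)$: a triple of spectra $(\lambda,\mu,\nu)$ lies in it exactly when the irreducible $\GL_n$-modules $V_\lambda\otimes V_\mu\otimes V_\nu$ contain a nonzero invariant for infinitely many dilates, equivalently when $0$ lies in the sum of the three coadjoint orbits. The trace condition~\eqref{eq:trace} simply records that the determinant characters must cancel, i.e. that the relevant weight is in the root lattice direction, so it is a necessary linear constraint cutting out the affine span; the content is the inequalities~\eqref{eq:ineqIJK}.

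Next I would invoke the description of the eigencone by \emph{Schubert-type} inequalities: for $G=\GL_n$ acting on a Grassmannian $\Gr(r,n)$, each inequality is indexed by a triple $(I,J,K)\in\Part(r,n)^3$ of Schubert classes whose structure constant in $\H^*(\Gr(r,n))$ is nonzero, and the inequality reads $\sum_{i\in I}\lambda_i+\sum_{j\in J}\mu_j+\sum_{k\in K}\nu_k\le 0$ after the appropriate normalization of indices (this is where the shift by $1^r$ and the reindexing $\lambda^I$ enters — it converts a subset $I$ to a partition fitting in an $r\times(n-r)$ box). So the first half of the argument is: $(\lambda,\mu,\nu)\in{\rm Horn}_\RR(n)$ iff \eqref{eq:trace} holds and \eqref{eq:ineqIJK} holds for every $(I,J,K)$ with $c_{IJK}\neq 0$ in $\H^*(\Gr(r,n))$ for all $r$. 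The key remaining step — and the heart of the Horn recursion — is to replace the cohomological condition $c_{IJK}\neq 0$ by the inductive membership condition~\eqref{eq:induction}. This is precisely the statement that a Littlewood--Richardson coefficient for $\Gr(r,n)$ is nonzero if and only if the associated triple of partitions lies in ${\rm Horn}_\RR(r)$; the translation $\lambda^K\mapsto\lambda^K-2(n-r)1^r$ accounts for the codimension twist relating the ample cone of $\Gr(r,n)$ to the eigencone of $\UN(r)$ (each of the $n-r$ ``missing'' rows contributes a shift, and the factor $2$ comes from the two partitions $I,J$ whose boxes must be complemented against $K$).

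I would carry this out as follows: (1) set up the identification of ${\rm Horn}_\RR(n)$ with the $\UN(n)$-eigencone and record that it is a closed convex polyhedral cone; (2) prove necessity of~\eqref{eq:trace} and of each~\eqref{eq:ineqIJK} with $c_{IJK}\neq0$ by the standard Schubert-intersection / Belkale transversality argument — restrict to a triple of general points in the corresponding Schubert varieties and take traces; (3) prove sufficiency by the Klyachko/Belkale direction, producing a triple of Hermitian matrices once all the Schubert inequalities hold, using saturation only implicitly via the polyhedrality already established; (4) finally, run the induction: show by downward induction on $r$ that $c_{IJK}\neq0$ in $\H^*(\Gr(r,n))$ is equivalent to~\eqref{eq:induction}, using the Klyachko/Belkale characterization of nonvanishing LR coefficients together with the inductive hypothesis applied in rank $r<n$. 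The main obstacle is step~(4): matching the combinatorial normalizations so that ``nonvanishing structure constant on $\Gr(r,n)$'' becomes exactly ``lies in ${\rm Horn}_\RR(r)$ after the $-2(n-r)1^r$ shift,'' and checking that the recursion bottoms out correctly at $r=1$ (where ${\rm Horn}_\RR(1)$ is just the trace hyperplane, so every $(I,J,K)\in\Part(1,n)^3$ with $i+j+k=2n+1$ is allowed). I expect the rest to be bookkeeping, but this identification of the two inductive structures is the crux and must be done with care about the index conventions fixed by $\lambda^I$ and $1^r$ above.
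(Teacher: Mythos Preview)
The paper does not give its own proof of Theorem~\ref{th:Hornconj}; it simply attributes the result to the combination of Klyachko's work and the Knutson--Tao saturation theorem, and your outline is precisely that standard deduction (Klyachko's Theorem~\ref{th:Kly} for your steps (1)--(3), then saturation to pass from the cohomological condition $c_{IJK}\neq 0$ to the inductive condition~\eqref{eq:induction}).

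One correction to your step (4): no induction on $r$ is needed or useful. The equivalence $c_{IJK}\neq 0 \iff (\lambda^I,\lambda^J,\lambda^K-2(n-r)1^r)\in{\rm Horn}_\RR(r)$ follows directly from Lesieur's identity $c_{IJK}=c^r_{\lambda^I\,\lambda^J\,\lambda^K-2(n-r)1^r}$ (Paragraph~\ref{par:lambdaI}) together with the saturation theorem (Theorem~\ref{th:KTsat}); the ``inductive hypothesis in rank $r<n$'' plays no role, and saturation is the genuine hard input --- it is \emph{not} a consequence of polyhedrality of the eigencone, contrary to what you suggest in step (3). Your heuristic for the shift $-2(n-r)1^r$ is also off: it is there simply to enforce $|\lambda^I|+|\lambda^J|+|\lambda^K-2(n-r)1^r|=0$, matching the codimension condition $|\lambda^I|+|\lambda^J|+|\lambda^K|=2r(n-r)$ for a top-degree product on $\Gr(r,n)$, not because ``two partitions are complemented against a third.''
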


Note that if starting with a point in ${\rm Horn}_\RR(r)$, one adds $1^r$ to one 
factor add $-1^r$ to another one, one stays in ${\rm Horn}_\RR(r)$. This remark
implies that condition~\eqref{eq:induction} is symmetric in $I$, $J$ and $K$.

In 1962, Horn \cite{Horn:conj} conjectured Theorem~\ref{th:Hornconj}.
This conjecture was proved by combining works by Klyachko \cite{Kly:stable} and
Knutson-Tao \cite{KT:saturation} (see also \cite{Fulton:survey} for a survey).  
Despite the proof,  the statement of Theorem~\ref{th:Hornconj} is as elementary
as the Horn problem is. 
Note that $I$, $J$ and $K$ are  sets of indexes in inequality~\eqref{eq:ineqIJK} 
whereas $\lambda^I$, $\lambda^J$ and $\lambda^K$ are eigenvalues of Hermitian 
matrices in condition~\eqref{eq:induction}.
This very curious remark certainly contributed  to the success of the Horn conjecture.\\

As pointed out by C. Woodward, Theorem~\ref{th:Hornconj} has a weakness. 
Indeed, it gives redundant inequalities. To describe a minimal set of inequalities,
we need to introduce some notation.
Let $\Gr(r,n)$ be the Grassmann variety of $r$-dimensional subspaces  of  $\CC^n$.
Consider its cohomology ring ${\rm H}^*(\Gr(r,n),\ZZ)$. 
To any $I\in\Part(r,n)$ is associated a Schubert class 
$\sigma_I\in{\rm H}^*(\Gr(r,n),\ZZ)$. 
There are two usual ways, obtained one from each other composing by
Poincar\'e duality, to assign a Schubert class $\sigma_I$ to $I$. Our
choice is detailed in Paragraph \ref{par:SchubGr}.
Let $[{\rm pt}]\in{\rm H}^{2r(n-r)}(\Gr(r,n),\ZZ)$ denote the Poincar\'e dual class
of the point. Belkale proved in \cite{Belk:P1} the following:

\begin{theo}\label{th:Belk}
  Let $(\lambda,\mu,\nu)$ be a triple of non-increasing sequences of $n$ real numbers.
Then, $(\lambda,\mu,\nu)\in{\rm Horn}_\RR(n)$ if and only if 
\begin{eqnarray}
\label{eq:traceBelk}
  \sum_i\lambda_i+\sum_j\mu_j+\sum_k\nu_k=0
\end{eqnarray}
 and
for any $r=1,\cdots,n-1$, for any $(I,J,K)\in \Part(r,n)^3$ such that 
\begin{eqnarray}
  \label{eq:cohomcond}
\sigma_I.\sigma_J.\sigma_K=[{\rm pt}]\in {\rm H}^*(\Gr(r,n),\ZZ),
\end{eqnarray}

we have:
\begin{eqnarray}
  \label{eq:ineqIJKBelk}
\sum_{i\in I}\lambda_i+\sum_{j\in J}\mu_j+\sum_{k\in K}\nu_k\leq 0.
\end{eqnarray}
\end{theo}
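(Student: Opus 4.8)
The plan is to deduce Theorem~\ref{th:Belk} from Theorem~\ref{th:Hornconj}. Since Theorem~\ref{th:Hornconj} already gives a correct (if redundant) list of inequalities, it suffices to prove two things: first, that the cohomological condition~\eqref{eq:cohomcond} is \emph{stronger} than the inductive condition~\eqref{eq:induction}, so that the inequalities~\eqref{eq:ineqIJKBelk} retained by Belkale form a \emph{subset} of those in Theorem~\ref{th:Hornconj} and hence are all valid on ${\rm Horn}_\RR(n)$; and second, that this smaller subset still \emph{suffices} to cut out the cone, i.e.\ every inequality from Theorem~\ref{th:Hornconj} that is genuinely needed (defines a facet of ${\rm Horn}_\RR(n)$) already appears among the cohomological ones. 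The first point gives ``$\Rightarrow$'' and the ``only if'' direction; the combination gives the full equivalence.

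For the first point, the key input is the translation between the two languages. Recall that $\sigma_I\cdot\sigma_J\cdot\sigma_K$ is a nonnegative multiple of $[{\rm pt}]$, and that multiple is the Littlewood--Richardson coefficient $c_{\lambda^I\lambda^J}^{\,(\text{complement of }\lambda^K)}$ (with the appropriate shift); the condition $\sigma_I\cdot\sigma_J\cdot\sigma_K=[{\rm pt}]$ says this coefficient equals $1$. On the other hand, by the Klyachko/Knutson--Tao side of Theorem~\ref{th:Hornconj}, membership of $(\lambda^I,\lambda^J,\lambda^K-2(n-r)1^r)$ in ${\rm Horn}_\RR(r)$ is equivalent to the nonvanishing of the corresponding Littlewood--Richardson coefficient (the saturation theorem of \cite{KT:saturation} is what makes the rational/integral passage work, so nonvanishing at the given integral point is exactly nonvanishing of the LR coefficient). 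Thus $\sigma_I\cdot\sigma_J\cdot\sigma_K=[{\rm pt}]$ implies the LR coefficient is nonzero, which implies~\eqref{eq:induction}. So Belkale's list is contained in Horn's list, and each inequality~\eqref{eq:ineqIJKBelk} holds on ${\rm Horn}_\RR(n)$.

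For the second point — the reverse inclusion up to facets — I would argue that if $(I,J,K)$ satisfies~\eqref{eq:induction} but \emph{not}~\eqref{eq:cohomcond}, then the inequality~\eqref{eq:ineqIJK} attached to $(I,J,K)$ is implied by the cohomological ones and hence redundant. Concretely, when $\sigma_I\cdot\sigma_J\cdot\sigma_K$ is a nontrivial nonnegative combination $\sum_L m_L\,[{\rm pt}]$ with total degree forcing, one uses the fact (from the geometric invariant theory / GIT interpretation of the eigencone, as in Klyachko and Belkale) that the inequality for $(I,J,K)$ is then a \emph{positive} combination of the inequalities for triples $(I',J',K')$ with $\sigma_{I'}\cdot\sigma_{J'}\cdot\sigma_{K'}=[{\rm pt}]$; this is a deformation/associativity argument on the Grassmannian (``dominant'' triples decompose into products of minimal ones). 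Therefore dropping the non-cohomological triples does not enlarge the cone, and Theorem~\ref{th:Belk} follows.

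The main obstacle is the second point: proving that the non-minimal Horn inequalities are genuinely redundant and that no facet is lost. The clean way is to invoke Belkale's transversality result — that for $(I,J,K)$ with $\sigma_I\cdot\sigma_J\cdot\sigma_K=[{\rm pt}]$ the corresponding Schubert cells meet transversally in exactly one point, and that these are precisely the triples giving the extremal rays of the dual cone — which is exactly the content established in \cite{Belk:P1}; the surrounding material in the excerpt (Paragraph~\ref{par:SchubGr} fixing the convention for $\sigma_I$) is there to make the bookkeeping in this step unambiguous. I would present the equivalence ``$c=1$'' $\Leftrightarrow$ ``facet'' as the technical heart, citing saturation for the ``$c\neq 0\Leftrightarrow$ \eqref{eq:induction}'' equivalence and Belkale's geometric argument for ``$c=1\Leftrightarrow$ facet''.
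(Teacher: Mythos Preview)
The paper does not prove Theorem~\ref{th:Belk}; it is quoted from \cite{Belk:P1} (``Belkale proved in \cite{Belk:P1} the following'') and used as input throughout. So there is no proof in the paper to compare your proposal against.

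On the merits of your sketch: the ``only if'' direction is fine --- condition~\eqref{eq:cohomcond} says the Littlewood--Richardson coefficient equals $1$, hence is nonzero, hence (by saturation) implies~\eqref{eq:induction}; so Belkale's inequalities are among Horn's and hold on the cone. The ``if'' direction, however, is not proved in your proposal. Your key sentence is confused: in the correct degree, $\sigma_I\cdot\sigma_J\cdot\sigma_K$ is a single integer multiple $c\,[{\rm pt}]$, not ``a nontrivial nonnegative combination $\sum_L m_L\,[{\rm pt}]$'', so the suggested decomposition into ``products of minimal ones'' has no obvious meaning at the level of cohomology classes. You then retreat to ``invoke Belkale's transversality result \ldots\ which is exactly the content established in \cite{Belk:P1}'' --- but that is the theorem you are supposed to be proving. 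The actual argument in \cite{Belk:P1} is a geometric one (tangent-space/transversality analysis showing that when $c\geq 2$ the corresponding hyperplane cannot support a facet of the cone), and nothing in your outline reproduces it. If your intent was simply to situate the statement and record that it is due to Belkale, that matches what the paper does; but as a self-contained proof, the hard implication is missing.
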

 
The statement of Theorem~\ref{th:Belk} is not elementary, but as proved by Knutson-Tao-Woodward in 
\cite{KTW} it is optimal:

\begin{theo}\label{th:KTW}
  In Theorem~\ref{th:Belk}, no inequality can be omitted.
\end{theo}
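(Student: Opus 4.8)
The plan is to prove that each inequality appearing in Theorem~\ref{th:Belk} actually defines a facet of the cone ${\rm Horn}_\RR(n)$, so that none can be dropped. Since ${\rm Horn}_\RR(n)$ spans the hyperplane cut out by the trace equation~\eqref{eq:traceBelk}, the assertion amounts to: for every $r$ and every triple $(I,J,K)\in\Part(r,n)^3$ with $\sigma_I\cdot\sigma_J\cdot\sigma_K=[{\rm pt}]$, the face
\[
\Face_{I,J,K}=\{(\lambda,\mu,\nu)\in{\rm Horn}_\RR(n)\,:\,\textstyle\sum_{i\in I}\lambda_i+\sum_{j\in J}\mu_j+\sum_{k\in K}\nu_k=0\}
\]
has codimension exactly one in ${\rm Horn}_\RR(n)$, equivalently dimension $3n-2$. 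The first step is to translate the Hermitian problem into the representation-theoretic / geometric setting: ${\rm Horn}_\RR(n)$ is (the real cone generated by) the set of dominant weights $(\lambda,\mu,\nu)$ for $\GL_n$ such that $(V_\lambda\otimes V_\mu\otimes V_\nu)^{\SL_n}\neq 0$, and an inequality is saturated precisely when the corresponding Littlewood--Richardson coefficient stays "extremal." Geometrically, a triple lies on $\Face_{I,J,K}$ iff there is a choice of three flags in general position for which the associated Schubert varieties $\Om_I,\Om_J,\Om_K$ in $\Gr(r,n)$ meet, and the cohomological condition~\eqref{eq:cohomcond} guarantees this intersection is a single reduced point.

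The core of the argument is a dimension count producing a full-dimensional family inside $\Face_{I,J,K}$. I would invoke Belkale's deformation/degeneration technique from \cite{Belk:P1} (and the refinement in \cite{KTW}): the condition $\sigma_I\cdot\sigma_J\cdot\sigma_K=[{\rm pt}]$ is equivalent to a \emph{transversality} statement — for generic flags $F_\bullet,F'_\bullet,F''_\bullet$, the intersection $\Om_I(F_\bullet)\cap\Om_J(F'_\bullet)\cap\Om_K(F''_\bullet)$ is a single point $W\in\Gr(r,n)$ at which the three Schubert cells meet transversally. Fixing such a configuration, one produces eigenvalue triples on the boundary face by a surgery construction: one prescribes the spectra of $A|_W, B|_W, C|_W$ (restrictions of the Hermitian forms) and of $A|_{W^\perp}$, etc., building $A,B,C$ with $A+B+C=0$ whose full spectra $(\lambda,\mu,\nu)$ satisfy the equality $\sum_I\lambda_i+\sum_J\mu_j+\sum_K\nu_k=0$ because that sum records exactly the "$W$-part" of the trace identity. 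The number of free parameters is: $r$ eigenvalues for each of the three restrictions to $W$ subject to one trace relation, plus $n-r$ eigenvalues for each restriction to the complement subject to one trace relation — but the interaction of the two blocks along the flag constraints must be analyzed carefully. Counting gives exactly $3n-2$ independent directions, hence the face is a facet.

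The main obstacle — and where I expect to spend the most effort — is showing that this construction yields a family of the \emph{full} codimension-one dimension rather than something smaller, i.e. ruling out unexpected coincidences among the constructed inequalities and verifying genuine transversality of the Schubert intersection. Concretely one must check (i) that the map from the parameter space of "compatible block data" to $\RR^{3n}$ has image of dimension $3n-2$, which requires that varying the internal flags moves the constructed triple off every \emph{other} facet; and (ii) that two distinct cohomologically-tight triples $(I,J,K)$ and $(I',J',K')$ give distinct facets, which follows once one knows each defines a facet together with the fact that the linear functionals $\sum_{i\in I}\lambda_i+\cdots$ are pairwise non-proportional modulo the trace hyperplane — and non-proportionality is elementary since $I,J,K$ are determined by the functional. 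Part (i) is the crux: it is precisely the place where the hypothesis $\sigma_I\cdot\sigma_J\cdot\sigma_K=[{\rm pt}]$ (as opposed to the weaker $\deg=0$ or a nonzero structure constant) is essential, because only a \emph{simple} intersection point gives the rigidity needed to deform freely in all remaining directions. Once the dimension count is in place, the conclusion of Theorem~\ref{th:KTW} is immediate: a polyhedral cone is the intersection of the halfspaces supported on its facets, and no proper subfamily of these halfspaces suffices, so no inequality in Theorem~\ref{th:Belk} can be omitted.
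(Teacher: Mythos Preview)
The paper does not give its own proof of Theorem~\ref{th:KTW}; it is quoted from \cite{KTW}, whose argument is purely combinatorial (puzzles). The generalization to arbitrary semisimple $G$ is recorded later as Theorems~\ref{th:GITEigen} and~\ref{th:eigen1}, with proof deferred to \cite{GITEigen} (GIT methods), and Corollary~\ref{cor:face1} is the precise form in which irredundancy is actually invoked in this paper. So there is no ``paper's own proof'' to compare against beyond these citations.

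Your outline has the right target but a genuine gap at exactly the point you yourself flag as ``the crux.'' The surgery/block construction you sketch --- choose $(\alpha,\beta,\gamma)\in{\rm Horn}_\RR(r)$ and $(\alpha',\beta',\gamma')\in{\rm Horn}_\RR(n-r)$ and merge --- does land on the hyperplane $\{\varphi_{IJK}=0\}$ and does carry $3n-2$ parameters, but this much goes through verbatim for \emph{any} $(I,J,K)$ with $\sigma_I\sigma_J\sigma_K\neq 0$, not only when the product equals $[{\rm pt}]$. Since the inequalities with structure constant $\geq 2$ are known to be redundant (this is the content of Theorem~\ref{th:Belk} versus Theorem~\ref{th:Kly}), the naive parameter count cannot by itself force $\Face_{I,J,K}$ to be a facet. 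What is missing is the verification that the constructed family contains \emph{strictly dominant} points: that the strict interleaving of $\alpha$ into positions $I$, $\beta$ into $J$, $\gamma$ into $K$ is simultaneously compatible with membership in ${\rm Horn}_\RR(r)\times{\rm Horn}_\RR(n-r)$. That compatibility holds precisely when $c_{IJK}=1$ and fails when $c_{IJK}\geq 2$ (this is exactly Corollary~\ref{cor:face1} specialized to $\GL_n$), so proving it \emph{is} the theorem. Your sentence ``only a simple intersection point gives the rigidity needed to deform freely'' is an assertion, not an argument; transversality of a single Schubert intersection does not by itself manufacture regular Hermitian triples, and nothing you cite from \cite{Belk:P1} closes this gap without substantial further input (e.g.\ the factorization of Theorem~\ref{th:prod}, the GIT description of the face in \cite{GITEigen}, or the puzzle machinery of \cite{KTW}).
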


\bigskip\paragraphe
Klyachko obtained in \cite{Kly:stable} a variation of Theorem
\ref{th:Belk}:

\begin{theo}\label{th:Kly}
  Theorem \ref{th:Belk} holds replacing condition \eqref{eq:cohomcond}
  by
\begin{eqnarray}
  \label{eq:cohomcond2}
\sigma_I.\sigma_J.\sigma_K=c[{\rm pt}], {\rm\ for\ a\ nonzero\
  integer\ c.}
\end{eqnarray}
\end{theo}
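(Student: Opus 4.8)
The plan is to deduce Theorem~\ref{th:Kly} from Theorem~\ref{th:Belk} by showing that the two systems of inequalities define the same cone. One inclusion is trivial: condition~\eqref{eq:cohomcond} is the special case $c=1$ of condition~\eqref{eq:cohomcond2}, so the cone cut out by the Klyachko inequalities is contained in ${\rm Horn}_\RR(n)$. The real content is the reverse inclusion, namely that every inequality~\eqref{eq:ineqIJKBelk} attached to a triple $(I,J,K)$ with $\sigma_I.\sigma_J.\sigma_K=c[{\rm pt}]$, $c>0$, is already a consequence of the inequalities coming from triples whose product is exactly $[{\rm pt}]$ (together with the trace equality). Equivalently, I must show that such an inequality is valid on all of ${\rm Horn}_\RR(n)$ — then Theorem~\ref{th:Belk} finishes the job, since the Belkale inequalities already characterize ${\rm Horn}_\RR(n)$.

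So the heart of the argument is: if $(I,J,K)\in\Part(r,n)^3$ satisfies $\sigma_I.\sigma_J.\sigma_K=c[{\rm pt}]$ for some integer $c\geq 1$, then for every $(\lambda,\mu,\nu)\in{\rm Horn}_\RR(n)$ we have $\sum_{i\in I}\lambda_i+\sum_{j\in J}\mu_j+\sum_{k\in K}\nu_k\leq 0$. The natural way to see this is via the Schubert calculus / intersection-theoretic origin of these inequalities. First I would recall the geometric meaning: an inequality $\sum_{i\in I}\lambda_i+\sum_{j\in J}\mu_j+\sum_{k\in K}\nu_k\leq 0$ holds on the eigencone precisely when, for generic complete flags $F_\bullet, G_\bullet, H_\bullet$ of $\CC^n$, the intersection of the three Schubert varieties $X_I(F_\bullet)\cap X_J(G_\bullet)\cap X_K(H_\bullet)$ in $\Gr(r,n)$ is nonempty — this is the Kleiman-transversality / Klyachko-type translation between nonvanishing of intersections and validity of eigenvalue inequalities. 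The condition $\sigma_I.\sigma_J.\sigma_K=c[{\rm pt}]$ with $c\geq 1$ says exactly that this triple product is a nonzero multiple of the point class, hence (the three Schubert varieties having complementary codimension summing to $\dim\Gr(r,n)$, since the product lands in top degree) their generic intersection is a finite set of $c\geq 1$ points, in particular nonempty. Therefore the corresponding inequality holds on ${\rm Horn}_\RR(n)$, as required.

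I would organize the write-up as follows. Step one: observe that $\sigma_I.\sigma_J.\sigma_K=c[{\rm pt}]$ with $c$ a nonzero integer forces $c>0$ (Schubert classes are effective, so their product has nonnegative coefficients; a nonzero integer multiple of $[{\rm pt}]$ occurring in the expansion of a product of Schubert classes must have positive coefficient), and forces $\codim\sigma_I+\codim\sigma_J+\codim\sigma_K=\dim\Gr(r,n)=r(n-r)$, equivalently $\sum(i_s-s)+\sum(j_s-s)+\sum(k_s-s)=r(n-r)$ for the relevant index sets. Step two: by Kleiman's transversality theorem, for generic flags the three Schubert varieties meet transversally in $c$ reduced points; since $c\geq 1$ the intersection is nonempty. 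Step three: invoke the standard dictionary (as used in the proofs of Theorems~\ref{th:Hornconj} and~\ref{th:Belk}) that nonemptiness of this generic intersection is equivalent to the inequality~\eqref{eq:ineqIJKBelk} being satisfied by every point of ${\rm Horn}_\RR(n)$. Step four: conclude that each Klyachko inequality is implied by the Belkale inequalities plus~\eqref{eq:traceBelk}, so the cone they cut out equals ${\rm Horn}_\RR(n)$; combined with the trivial inclusion from $c=1$, this proves Theorem~\ref{th:Kly}.

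The main obstacle, and the point needing the most care, is Step three: making precise and justifying the equivalence ``generic intersection of the three Schubert varieties is nonempty $\iff$ the eigenvalue inequality holds on the whole Horn cone.'' This is where the substantive input lies — it rests on Klyachko's semistability analysis (GIT on $\Gr(r,n)$, or equivalently on the relation between Schubert positions, Harder-Narasimhan filtrations, and the stability of triples of Hermitian matrices summing to zero), and one has to be careful that it is genuinely an equivalence and not merely one implication, and that it applies uniformly for all $r<n$ and all triples of complementary codimension. Once that dictionary is in hand, everything else is a short, essentially formal, argument about effectivity and dimension counts in $\H^*(\Gr(r,n),\ZZ)$.
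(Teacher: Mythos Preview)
The paper does not supply its own proof of Theorem~\ref{th:Kly}; it simply attributes the result to Klyachko \cite{Kly:stable}. So there is no proof in the paper to compare against directly. That said, your deduction of Theorem~\ref{th:Kly} from Theorem~\ref{th:Belk} is correct, and it is worth noting that the one nontrivial ingredient you flag --- Step three, the assertion that whenever $\sigma_I.\sigma_J.\sigma_K\neq 0$ the associated inequality~\eqref{eq:ineqIJKBelk} holds on all of ${\rm Horn}_\RR(n)$ --- is precisely the $\GL_n$ specialization of Theorem~\ref{th:facedom} later in the paper (stated there for arbitrary $G/P_\alpha$). So within this paper's framework you could simply invoke Theorem~\ref{th:facedom} and be done.

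Two small remarks on your write-up. First, you only need one implication in Step three, not the equivalence: you need ``$\sigma_I.\sigma_J.\sigma_K\neq 0$ $\Rightarrow$ inequality valid on ${\rm Horn}_\RR(n)$'', and that is exactly what Theorem~\ref{th:facedom} (or Klyachko's original argument via Hilbert--Mumford on $\Gr(r,n)$) provides. The reverse implication is irrelevant for this proof. Second, historically Theorem~\ref{th:Kly} predates Theorem~\ref{th:Belk}; Klyachko proved it directly via GIT, and Belkale later sharpened the list. Your argument runs the logic in the opposite direction, which is perfectly fine once Theorem~\ref{th:Belk} is available, but this explains why the paper simply cites Klyachko rather than giving the derivation you outline.
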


If $\lambda$, $\mu$ and $\nu$ are three partitions then we denote by
$c_{\lambda\,\mu}^\nu\in\ZZ^+$ the associated Littlewood-Richardson
coefficient.
These integers control the cup product in
the cohomology of the Grassmanians.  For example, condition
\eqref{eq:cohomcond2} is equivalent to the nonzeroness of some
Littlewood-Richardson coefficient. 

If $\nu=(\nu_1\geq\cdots\geq \nu_n)$, we set $\nu^\vee=(-\nu_n\geq\cdots\geq -\nu_1)$.
A consequence of Knutson-Tao's saturation theorem is that

\begin{theo}
  \label{th:KTsat}
  The coefficient $c_{\lambda\,\mu}^\nu$ is nonzero if and only if  
$(\lambda,\mu,\nu^\vee)\in {\rm Horn}_\RR(n)$.
\end{theo}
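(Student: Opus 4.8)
\medskip

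\noindent\emph{Proof proposal.}
The plan is to deduce the statement from the saturation theorem of Knutson--Tao \cite{KT:saturation}, combined with the classical identification of ${\rm Horn}_\RR(n)$ with the cone spanned by the triples of partitions having nonzero Littlewood--Richardson coefficient. Writing $V_\xi$ for the irreducible rational $\GL_n$-module of non-increasing integral highest weight $\xi$, one has $V_{\nu^\vee}\cong V_\nu^*$, so that
\[
c_{\lambda\,\mu}^\nu=\dim\Hom_{\GL_n}\bigl(V_\nu,\,V_\lambda\otimes V_\mu\bigr)
=\dim\bigl(V_\lambda\otimes V_\mu\otimes V_{\nu^\vee}\bigr)^{\GL_n}.
\]
The semigroup of triples of partitions $(\lambda,\mu,\xi)$ with $\bigl(V_\lambda\otimes V_\mu\otimes V_\xi\bigr)^{\GL_n}\neq 0$ is finitely generated, being the semigroup of highest weights occurring in a finitely generated invariant ring (the multihomogeneous coordinate ring of a product of partial flag varieties); hence the convex cone it spans is closed, rational and polyhedral, and by the geometry of the moment map this cone is the eigencone of $U(n)$. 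Under the identification of Hermitian matrices with $\mathfrak u(n)^*$ via the trace form, that eigencone is exactly ${\rm Horn}_\RR(n)$, with $(\lambda,\mu,\nu^\vee)$ the point attached to $c_{\lambda\,\mu}^\nu$; this is essentially Klyachko's result \cite{Kly:stable} (see also \cite{Fulton:survey}). Consequently
\[
(\lambda,\mu,\nu^\vee)\in{\rm Horn}_\RR(n)
\qquad\Longleftrightarrow\qquad
c_{N\lambda\,N\mu}^{N\nu}\neq 0\ \mbox{ for some integer }N\geq 1 .
\]

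Granting this equivalence, the theorem is immediate. If $c_{\lambda\,\mu}^\nu\neq 0$, the case $N=1$ gives $(\lambda,\mu,\nu^\vee)\in{\rm Horn}_\RR(n)$. Conversely, if $(\lambda,\mu,\nu^\vee)\in{\rm Horn}_\RR(n)$, the equivalence above yields some $N\geq 1$ with $c_{N\lambda\,N\mu}^{N\nu}\neq 0$, and then the saturation theorem \cite{KT:saturation} forces $c_{\lambda\,\mu}^\nu\neq 0$.

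The only genuinely deep input is the saturation theorem, which I would invoke as a black box. In the last displayed equivalence the implication ``$\Leftarrow$'' is the elementary direction: a nonzero invariant vector in $V_{N\lambda}\otimes V_{N\mu}\otimes V_{N\nu^\vee}$ produces a $U(n)$-fixed point of the projectivization, whose moment value $0$ lies in the compact---hence closed---sum of the three coadjoint orbits scaled by $N$, and one divides by $N$; alternatively this half can be run combinatorially, since $c_{\lambda\,\mu}^\nu\neq 0$ forces a hive (Berenstein--Zelevinsky) polytope to be non-empty and hence the Horn inequalities of Theorem~\ref{th:Hornconj} to hold for $(\lambda,\mu,\nu^\vee)$. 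The implication ``$\Rightarrow$'' is the substantive half of this classical dictionary, and is where Klyachko's moment-map/GIT argument enters. What remains needs care rather than ideas: keeping the duality $\nu\mapsto\nu^\vee$ consistent throughout, and matching the relation $\sum_i\lambda_i+\sum_j\mu_j+\sum_k(\nu^\vee)_k=0$ defining ${\rm Horn}_\RR(n)$ with the degree identity $|\lambda|+|\mu|=|\nu|$ that $c_{\lambda\,\mu}^\nu\neq 0$ imposes.
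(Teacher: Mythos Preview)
Your proposal is correct and matches the paper's treatment: the paper does not give a proof of this theorem at all, simply stating it as ``a consequence of Knutson-Tao's saturation theorem'' in the introduction. Your write-up in fact supplies more detail than the paper, spelling out the identification of ${\rm Horn}_\RR(n)$ with the rational cone generated by triples with nonzero invariant and then invoking saturation for the nontrivial direction, which is exactly the intended deduction.
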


Compare  Theorems \ref{th:Belk} and \ref{th:Kly}. The advantage of
Theorem \ref{th:Belk} is obvious: it gives exactly the minimal set
of linear inequalities needed to characterized ${\rm Horn}_\RR(n)$.
The advantage of Theorems \ref{th:Kly} is that by Theorem
\ref{th:KTsat},  condition \eqref{eq:cohomcond2} can be
reinterpreted in terms of Horn cones.
 This is the key point to explain the inductive nature of Theorem
 \ref{th:Hornconj}. Finally,  Theorem
 \ref{th:Hornconj} gives an inductive algorithm to decide if a given
 point belongs to ${\rm Horn}_\RR(n)$. Equivalently, it gives an inductive
 algorithm to decide if a given Littlewood-Richardson is zero or positive.

\bigskip\paragraphe
In this work, we give an 
{\bf inductive algorithm to decide if a given Littlewood-Richardson coefficient equals 
to one or not}.
More precisely, our algorithm proceeds inductively and decides if a
given Littlewood-Richardson coefficient is equal to zero or if it is
equal to one or  it  is greater than one.
In particular, our algorithm decides if condition~\eqref{eq:cohomcond} is fulfilled.
The combination of this algorithm with Theorems~\ref{th:Belk} and \ref{th:KTW}
gives an  inductive description of the minimal set of inequalities 
of ${\rm Horn}_\RR(n)$.
Note that our algorithm uses  Derksen-Weyman's one (see~\cite{DW:algo}) as a procedure.

Our algorithm will be detailed in  Section~\ref{sec:algo}.
 We now explain the main point of the algorithm. 
First, Horn's conjecture allows to decide if a given Littlewood-Richardson 
coefficient is zero or not. 
So, the remaining question is to decide if a  given Littlewood-Richardson 
coefficient is less or equal to one or not.

First, we have a geometrical construction to obtain 
 such coefficients.
Indeed, consider a product $Y=G/P\times G/Q\times G/R$ of three compact 
$G=\GL_n(\CC)$-homogeneous spaces which contains a dense $G$-orbit.
Let $\Li$ be a $G$-linearized line bundle on $Y$. 
Then, the dimension of the space $H^0(Y,\Li)^G$ of $G$-invariant sections
 of $\Li$ is a Littlewood-Richardson coefficient $c$. 
The fact that $Y$ contains a dense $G$-orbit implies that
\begin{eqnarray}
  \label{eq:homdonne1}
  c\leq 1.
\end{eqnarray}

On the other hand, a Derksen-Weyman's theorem (see \cite{DW:comb,Roth:red,ReductionRule}) shows that some Littlewood-Richardson  coefficients $c$ are  equal to a product of two other ones $c_1$ and $c_2$ 
(associated to smaller partitions); $c=c_1.c_2$.
The obvious remark is that 
\begin{eqnarray}
  \label{eq:DWdonne1}
  c=1\ \iff\  c_1=c_2=1.
\end{eqnarray}
Roughly speaking, our algorithm is based on the fact that {\it 
any Littlewood-Richardson coefficient $c$ equal to one,  can be obtained 
applying finitely many times assertions~\eqref{eq:homdonne1} and 
\eqref{eq:DWdonne1}}.
In particular, the LR-coefficients associated to regular partitions equal to 
 one can be obtained using the fact that $SL_2(\CC)$ has a dense orbit 
in $(\PP^1)^3$ and Derksen-Weyman's theorem.

\bigskip\paragraphe
We now want to explain a generalization of the Horn problem.
Let $G$ (e.g. $G=\GL_n(\CC)$) be a reductive complex group and  $U$
(e.g. $U=U_n(\CC)$) be a maximal compact subgroup.
Let $\lu$ denote its Lie algebra.
We are interested in the following problem:
what are the triples $(\Orb_1,\Orb_2,\Orb_3)$ of adjoint orbits such that 
$\Orb_1+\Orb_2+\Orb_3$ contains $0$.

Let $T$ be a maximal torus of $G$ such that  $T\cap U$ is a Cartan subgroup of $U$.
Let $\lt$ denote its  Lie algebras and $\lt^+$ be a fixed Weyl chamber of $\lt$.
It turns out that the triples of orbits as above are  parametrized by a closed convex 
polyhedral cone contained in $(\lt^+)^3$ (see Section~\ref{sec:eigencone} for details).
We will denote by $\Gamma(U)$ this cone.
Using the Cartan-Killing form one can identify $\Gamma(U(n))$ with
${\rm Horn}_\RR(n)$.\\

We now introduce notation to describe a minimal set of inequalities for $\Gamma(U)$.

Let $\alpha$ be a simple root of $G$ and let  $\omega_\alpha$ be the
corresponding fundamental weight.
We consider the  standard maximal parabolic subgroup 
$P_\alpha$ associated to $\alpha$.
Let $W$ denote the Weyl group of $G$. The Weyl group $W_\alpha$ of $P_\alpha$ is also
the stabilizer of $\omega_{\alpha}$.

Consider now the cohomology group ${\rm H}^*(G/P_\alpha,\ZZ)$: it is freely generated
by the Schubert classes $\sigma_w$ parametrized by the cosets $w\in W/W_\alpha$.
In \cite{BK}, Belkale-Kumar defined a new product denoted $\kbprod$ on 
${\rm H}^*(G/P_\alpha,\ZZ)$. We can now state the main result of \cite{BK} which 
generalizes Theorem~\ref{th:Belk}:

\begin{theo}\label{thi:BK}
We assume that $U$ is semisimple.
  Let $(\xi,\,\zeta,\,\eta)\in(\lt^+)^3$. Then, $(\xi,\,\zeta,\,\eta)$ belongs to 
$\Gamma(U)$ if and only if for any simple root $\alpha$ and any triple of
Schubert classes $\sigma_u,\,\sigma_v$ and $\sigma_w$ in ${\rm H}^*(G/P_\alpha,\ZZ)$
such that
\begin{eqnarray}
  \label{eq:BKcond}
  \sigma_u\,\kbprod\,\sigma_v\,\kbprod\,\sigma_w=[{\rm pt}],
\end{eqnarray}
we have:
\begin{eqnarray}
  \label{eq:ineqGamma}
 \omega_{\alpha}(u^{-1}\xi)+\omega_{\alpha}(v^{-1}\zeta)+\omega_{\alpha}(w^{-1}\eta)\leq 0.
\end{eqnarray}
\end{theo}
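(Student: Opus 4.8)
The plan is to recast membership in $\Gamma(U)$ as a geometric invariant theory (GIT) statement and then to describe the codimension-one faces of the eigencone. By the standard dictionary between moment maps and GIT (Kempf--Ness, Mumford, Ness), after clearing denominators one has $(\xi,\zeta,\eta)\in\Gamma(U)$ if and only if $\XssL\neq\emptyset$, where $X=G/B\times G/B\times G/B$ and $\Li$ is the ample $G$-linearized line bundle on $X$ determined by $(\xi,\zeta,\eta)$; one may work with $G/B$ because an interior point of a codimension-one face of $\Gamma(U)$ that does not lie on a wall of $(\lt^+)^3$ has all three components regular dominant. Since $\Gamma(U)$ is a full-dimensional closed convex polyhedral cone inside the polyhedral cone $(\lt^+)^3$, Theorem~\ref{thi:BK} is equivalent to the conjunction of: (a) every inequality~\eqref{eq:ineqGamma} attached to a triple satisfying~\eqref{eq:BKcond} holds on all of $\Gamma(U)$; and (b) every codimension-one face of $\Gamma(U)$ not contained in a wall of $(\lt^+)^3$ is supported by such an inequality.

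For (a) I would use the Hilbert--Mumford criterion. If $\sigma_u\kbprod\sigma_v\kbprod\sigma_w=[{\rm pt}]$ then, as each nonzero structure constant of $\kbprod$ coincides with the corresponding cup-product one, we also have $\sigma_u\cdot\sigma_v\cdot\sigma_w=[{\rm pt}]\neq 0$ in ${\rm H}^*(G/P_\alpha,\ZZ)$. Kleiman transversality then produces a point $x\in X$ lying in a generic triple of translated Schubert cells whose Hilbert--Mumford numerical invariant with respect to a conjugate of a dominant one-parameter subgroup $\tau_\alpha$ attached to $\alpha$ (one with $P(\tau_\alpha)=P_\alpha$) computes the left-hand side of~\eqref{eq:ineqGamma}; semistability of $x$ then forces it to be $\leq 0$. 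This is the Berenstein--Sjamaar/Klyachko-type estimate, and I would keep it short.

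The substance is (b), for which I would use the GIT description of the faces of $\Gamma(U)$. A codimension-one face is governed by a pair $(\tau,C)$, where $\tau$ is a dominant one-parameter subgroup of $G$ and $C$ is an irreducible component of the fixed locus $X^\tau$, and such a face is exposed by a \emph{non-redundant} inequality exactly when the pair is \emph{well-covering}, i.e. the collapsing map $G\times_{P(\tau)}C^+\to X$ from the contracted product of Bialynicki--Birula cells is dominant and birational onto its image. Because the face has codimension one and avoids the walls of $(\lt^+)^3$, the parabolic $P(\tau)$ is a maximal parabolic $P_\alpha$, so $\tau$ is a positive multiple of $\tau_\alpha$; pushing $(\tau,C)$ forward along $(G/B)^3\to(G/P_\alpha)^3$ yields a triple of Schubert varieties $C_u,C_v,C_w$ in $G/P_\alpha$, and the inequality read off from the well-covering pair is precisely~\eqref{eq:ineqGamma} for $(u,v,w)$. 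It remains to match the well-covering condition with~\eqref{eq:BKcond}, and here I would invoke Belkale--Kumar's theory of Levi-movability: dominance of $G\times_{P_\alpha}(C_u\times C_v\times C_w)\to X$ amounts to $\codim\sigma_u+\codim\sigma_v+\codim\sigma_w=\dim(G/P_\alpha)$ together with transversality of generic Levi-translates of the three cells at a common point, while birationality amounts to the Levi of $P_\alpha$ acting simply transitively on the set of such transverse configurations; Belkale and Kumar proved that these two properties together are equivalent to $\sigma_u\kbprod\sigma_v\kbprod\sigma_w=[{\rm pt}]$. Conversely any triple satisfying~\eqref{eq:BKcond} produces a well-covering pair, hence a non-redundant face, which finishes (b).

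I expect the main obstacle to be (b), and inside it the identification of the birationality of the collapsing map with the cohomological condition~\eqref{eq:BKcond}. This relies on two delicate inputs: the infinitesimal transversality analysis of a triple of Schubert cells under the Levi action --- the point where the ``two-pointed'' nature of $\kbprod$ enters, through the vanishing of the relevant relative obstruction in the tangent bundle --- and the degree-one count, showing that once the dimensions add up and a transverse configuration exists, the Levi orbit through it is dense and the collapsing map is generically one-to-one exactly in the Levi-movable case. A second, more routine but necessary, point is the reduction showing that only maximal parabolics, hence only simple roots $\alpha$, contribute new facets, and that each such facet can be exposed by a one-parameter subgroup normalized to $\tau_\alpha$; this is a convexity-plus-GIT argument that guarantees completeness of the list in the theorem.
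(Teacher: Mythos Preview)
The paper does not prove this theorem. It is stated in the introduction explicitly as ``the main result of \cite{BK}'' and is later restated in Section~\ref{section:Eigencone} as Theorem~\ref{th:BK}, again with the attribution ``We now state the main result of \cite{BK}''. There is no proof in the paper to compare your proposal against; the result is imported as a black box and then used (together with Theorem~\ref{th:eigen1} from \cite{GITEigen}) as input to the paper's own contributions.

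That said, your sketch is a coherent outline of a proof, but it is closer to Ressayre's GIT/well-covering approach in \cite{GITEigen} than to the original argument in \cite{BK}. Belkale and Kumar do not phrase (b) in terms of well-covering pairs and birationality of a collapsing map; they work directly with Levi-movability and a numerical/dimension criterion (their Theorem~15 and Corollary~29), reducing the sufficiency of the inequalities to the Berenstein--Sjamaar framework via a character computation rather than via a face-by-face GIT analysis. Your identification of birationality of $G\times_{P_\alpha}C^+\to X$ with $\sigma_u\kbprod\sigma_v\kbprod\sigma_w=[{\rm pt}]$ is the content of Ressayre's later refinement, which is what yields the irredundancy statement (Theorem~\ref{th:eigen1} here); Belkale--Kumar themselves only prove that the list of inequalities is \emph{sufficient} and do not claim minimality. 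So if you intend to prove exactly Theorem~\ref{thi:BK} as stated, you do not need the birationality/well-covering step: it suffices to show that the $\kbprod$-inequalities contain all the facet inequalities coming from the Berenstein--Sjamaar list, which is how \cite{BK} proceeds.
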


In \cite{GITEigen}, the following generalization of Theorem~\ref{th:KTW} is
obtained:

\begin{theo}\label{th:GITEigen}
 In Theorem~\ref{thi:BK}, no inequality can be omitted.  
\end{theo}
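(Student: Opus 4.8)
The plan is to show that each inequality~\eqref{eq:ineqGamma} coming from a triple $(\sigma_u,\sigma_v,\sigma_w)$ with $\sigma_u\kbprod\sigma_v\kbprod\sigma_w=[{\rm pt}]$ actually supports a codimension-one face of the cone $\Gamma(U)$, so that it cannot be dropped without enlarging the cone. The natural framework, following \cite{GITEigen}, is geometric invariant theory: the cone $\Gamma(U)$ is (after identification via the Cartan--Killing form) the set of $\Li$-weights for which the GIT semistable locus $X^{\rm ss}(\Li)$ is nonempty, where $X=G/B\times G/B\times G/B$ (or the relevant product of flag varieties), and the walls of $\Gamma(U)$ are detected by one-parameter subgroups $\tau$ of $G$ together with the Kempf--Ness/Hilbert--Mumford numerical function. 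The dictionary is that a pair $(\tau,$ well-chosen point$)$ produces a linear inequality on $(\lt^+)^3$, and the Belkale--Kumar condition~\eqref{eq:BKcond} is exactly the condition that this inequality is \emph{irredundant}, i.e. that the associated fixed-point component in $X^\tau$ contains an $\Li$-semistable point for the Levi action — equivalently, a statement about the Belkale--Kumar product being $[{\rm pt}]$.

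The key steps I would carry out are: (1) recall the GIT description of $\Gamma(U)$ and the bijection between its facets and certain pairs $(\alpha,\tau)$ with $\tau$ a one-parameter subgroup adapted to the maximal parabolic $P_\alpha$; (2) translate, via Belkale--Kumar's transversality results, the numerical condition that the Levi $L_\alpha$-action on the fixed component $C=X^\tau$ has a semistable point into the cohomological condition $\sigma_u\kbprod\sigma_v\kbprod\sigma_w=[{\rm pt}]$ — here the Belkale--Kumar product appears precisely because of the ``no infinitesimal motion out of $C$'' condition that forces the tangent-space weights to cancel, which is the defining feature of $\kbprod$; (3) for such a pair, explicitly construct, using the GIT picture, a point $(\xi,\zeta,\eta)\in\Gamma(U)$ lying on the hyperplane~\eqref{eq:ineqGamma} but strictly inside all other defining half-spaces — this is the heart of showing the face is genuinely codimension one; (4) conclude that removing this inequality from the list in Theorem~\ref{thi:BK} would admit points violating it but satisfying all the others, so no inequality is redundant.

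The main obstacle is step~(3), the construction of a point on the prescribed wall and strictly off every other wall. This requires a careful analysis of the local structure of $\Gamma(U)$ near the relevant face: one must produce semistable points for the Levi subgroup $L_\alpha$ acting on the fixed-point component $C=X^\tau$ that are, moreover, \emph{stable} (so that the corresponding point of $\Gamma(U)$ is interior to the face and not on a sub-face), and then deform them back into $X$ in a way that does not hit any other wall. Establishing this will lean on the inductive structure — the Levi $L_\alpha$ is a product of smaller classical/general-linear groups, so one may assume an analogous irredundancy statement for its eigencone — together with a dimension count showing that the locus of $L_\alpha$-stable points in $C$ is nonempty exactly when $\sigma_u\kbprod\sigma_v\kbprod\sigma_w=[{\rm pt}]$. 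The rest of the argument (steps 1, 2, 4) is essentially bookkeeping with the Hilbert--Mumford criterion and the Belkale--Kumar formalism already available from \cite{BK} and \cite{GITEigen}.
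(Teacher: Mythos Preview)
The paper does not actually prove this theorem: it is quoted from \cite{GITEigen} (see the sentence immediately preceding the statement, and also the restatement as Theorem~\ref{th:eigen1} later in the paper, again attributed to \cite{GITEigen}). So there is no ``paper's own proof'' to compare against; the result is imported.

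As for your proposal itself: what you have written is a plan, not a proof. You correctly identify the target --- that each Belkale--Kumar inequality supports a genuine codimension-one face of $\lr(G)$ (equivalently $\Gamma(U)$), and that this is what Theorem~\ref{th:eigen1} asserts --- and you correctly locate the problem in the GIT framework of \cite{GITEigen}. Your steps (1), (2), (4) are indeed bookkeeping once the machinery is in place. But you explicitly flag step~(3) as ``the main obstacle'' and do not resolve it: producing a point that lies on the given wall and strictly off every other wall is precisely the content of the theorem, and your paragraph only restates what would need to be done (find $L_\alpha$-stable points on the fixed component $C$, deform back, control dimensions) without carrying it out. The suggestion to proceed by induction on the Levi is plausible in type $A$ but is not how \cite{GITEigen} proceeds in general, and you give no argument that such an induction closes. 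In short, your outline is compatible with the approach of \cite{GITEigen}, but it stops exactly where the actual work begins; as written it is a description of the difficulty rather than a proof.
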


\bigskip\paragraphe {\bf For $U$ simple of type B or C}, in Theorems~\ref{th:ppalIG} and 
\ref{th:ppalSO} below, we prove that each condition~\eqref{eq:BKcond} is equivalent to
the fact that two  Littlewood-Richardson coefficients 
(for ordinary grassmanian !) are equal to one.
The combination of Algorithm~\ref{sec:algo} and these results gives a
{\bf cohomology free   description of the minimal set of inequalities for $\Gamma(U)$}.
Note that in \cite{BK:typeBC}, Belkale-Kumar gave a redundant cohomology free description 
of $\Gamma(U)$.

\bigskip\paragraphe
The paper is organized as follows.
In Section~\ref{sec:Horn}, we introduce basic material about the Littlewood-Richardson coefficients and
the Horn cone.
In Section~\ref{sec:quiver}, we recall some useful results about quiver representations. 
 In Section~\ref{sec:algogen}, we state and prove our inductive algorithm to decide if
a given  Littlewood-Richardson coefficient equals to one or not.
In Section~\ref{sec:schub}, we introduce a parametrization of the Schubert classes of any complete 
rational homogeneous space and give some examples. In Section~\ref{sec:levimov}, we recall from \cite{BK}
the notion of Levi-movability. 
In Section~\ref{section:Eigencone}, we recall some results about eigencones.
In Sections~\ref{sec:cohomSp} and \ref{sec:cohomSO}, we prove our results about the cohomology
of isotropic and odd orthogonal Grassmannians.

\bigskip
\noindent{\bf Acknowledgments.}
I thank N. Perrin and M. Brion for useful discussions.
\section{The Horn cone}
\label{sec:Horn}

\subsection{The Littlewood-Richardson coefficients}

\paragraphe{\bf Schubert Calculus.}
\label{par:SchubGr}
Let $\Gr(r,n)$ be the Grassmann variety of $r$-dimensional subspaces $L$ of a 
fixed $n$-dimensional vector space $V$.
Let $F_\bullet$: $\{0\}=F_0\subset F_1\subset 
F_2\subset\cdots\subset F_{n}=V$ be a complete flag of $V$.

If $a\leq b$, we will denote by $[a;b]$ the set of integers between $a$ and $b$.
Let $\Part(r,n)$ denote the set of subsets of $[1;n]$ with $r$ elements.
For any $I=\{i_1<\cdots<i_r\}\in\Part(r,n)$,
the Schubert variety $\Omega_I(F_\bullet)$ in $\Gr(r,n)$  is defined by
$$
\Omega_I(F_\bullet)=\{L\in\Gr(r,n)\,:\,
\dim(L\cap F_{i_j})\geq j {\rm\ for\ }1\leq j\leq r\}.
$$
The Poincar\'e dual of the homology class of $\Omega_I(F_\bullet)$ does not depend on $F_\bullet$;
it is denoted by $\sigma_I$.
The $\sigma_I$'s form a $\ZZ$-basis for the cohomology ring of $\Gr(r,n)$. 
The class associated to $[1;r]$ is the class of the point; it will be denoted by $[{\rm pt}]$.
It follows that for any subsets $I,\,J\in\Part(r,n)$, there is a unique expression
$$
\sigma_I.\sigma_J=\sum_{K\in \Part(r,n)}c_{IJ}^K\sigma_K,
$$
for integers $c_{IJ}^K$. We define $K^\vee$ by: $i\in K^\vee$ if and only if $n+1-i\in K$.
Then, $\sigma_K$ and $\sigma_{K^\vee}$ are Poincar\'e dual.
So, if the sum of the codimensions of $\Omega_I(F_\bullet)$, $\Omega_J(F_\bullet)$ and 
$\Omega_K(F_\bullet)$ equals the dimension of $\Gr(r,n)$,  we have
$$
\sigma_I.\sigma_J.\sigma_K=c_{IJ}^{K^\vee}[{\rm pt}].
$$
We set 
$$
c_{IJK}:=c_{IJ}^{K^\vee}.
$$
Note that $c_{IJK}=c_{JIK}=c_{IKJ}=\dots$

\bigskip\paragraphe
Recall that the irreducible representations of $G=\GL_n(\CC)$ are indexed by sequences 
$\lambda=(\lambda_1\geq\cdots\geq\lambda_n)\in\ZZ^n$. 
Let us denote by $\Lambda_n^+$ the set of such sequences.
We set $|\lambda|=\lambda_1+\cdots+\lambda_n$.
Denote   the representation corresponding to $\lambda$ by $V_\lambda$.
For example, the representation $V_{1^n}$ is the determinant representation of $\GL_n(\CC)$.
Define the Littlewood-Richardson coefficients $c_{\lambda\,\mu}^\nu\in\NN$ by:
$$
V_\lambda\otimes V_\mu=\sum_{\nu\in\Lambda^+_n} c_{\lambda\,\mu}^\nu V_\nu.
$$
For $\nu=(\nu_1\geq\cdots\geq\nu_n)$, we set: 
$\nu^\vee=(-\nu_n\geq\cdots\geq -\nu_1)$. Then, 
$V_{\nu^\vee}$ is the dual of $V_\nu$. Finally, we set
$$c^n_{\lambda\mu\nu}=c_{\lambda\mu}^{\nu^\vee}.$$
Note that $c^n_{\lambda\mu\nu}$ is the dimension of the subspace
$(V_\lambda\otimes V_\mu\otimes V_\nu)^{G}$ of $G$-invariant vectors in 
$V_\lambda\otimes V_\mu\otimes V_\nu$.
Consider:
$$
{\rm Horn}(n):=\{(\lambda,\mu,\nu)\in(\Lambda_n^+)^3\,:\,c^n_{\lambda\mu\nu}\neq 0\}.
$$

\bigskip\paragraphe \label{par:lambdaI}
We will use the standard correspondence between elements $I=\{i_1<\cdots<i_r\}$ of $\Part(r,n)$ and 
partitions  $\lambda^I\in\Lambda_r^+$
such that $\lambda_1\leq n-r$ and $\lambda_r\geq 0$.
This correspondence is obtained by defining
$$
\lambda^I =(i_r- r, i_{r-1}- (r- 1), \cdots , i_2- 2, i_1- 1).
$$
Note that the dimension of $\Omega_I$ equals $|\lambda^I|$ and that
the degree of $\sigma_I$ is $2(r(n-r)-|\lambda^I|)$.
Authors use the base $\theta_I=\sigma_I^\vee$ for the cohomology group
of the Grassmanian. In this case, the degree of $\theta_I$ is
$2|\lambda^I|$.  
For $I,\,J$ and $K$ in $\Part(r,n)$, Lesieur showed in 1947 (see \cite{Lesieur}) that:
$$
\theta_I.\theta_J.\theta_{K^\vee}=c_{\lambda^I\lambda^J}^{\lambda^K}[{\rm
  pt}].
$$
Applying Poincar\'e duality, we deduce that 
$$
c_{IJK}=c^r_{\lambda^I\lambda^J\lambda^K-2(n-r)1^r}.
$$




\subsection{The Horn cone}

Let $I=\{i_1<\cdots<i_r\}\in\Part (r,n)$ and $\lambda\in\Lambda^+_n$.
We set 
$$
\lambda_I=(\lambda_{i_1}\geq\cdots\geq\lambda_{i_r}).
$$
In particular, $|\lambda_I|=\sum_{i\in I}\lambda_i$.
Let $I,J,K\in\Part(r,n)$. We define the ``linear form'' $\varphi_{IJK}$
 on $(\Lambda_n^+)^3$ by:
$$
\varphi_{IJK}(\lambda,\mu,\nu)=|\lambda_I|+|\mu_J|+|\nu_K|.
$$

Combining \cite{Belk:P1} and \cite{KT:saturation}, we obtain the following description of
${\rm Horn}(n)$:

\begin{theo}
\label{th:Horn}
Let $(\lambda,\mu,\nu)\in(\Lambda_n^+)^3$.
The point $(\lambda,\mu,\nu)$ belongs to ${\rm Horn}(n)$ if and only if 
$$
|\lambda|+|\mu|+|\nu|=0,
$$
and for any $r\in [1;n-1]$, for any $(I,J,K)\in\Part(r,n)$ such that
$\sigma_I\sigma_J\sigma_K=[{\rm pt}]$, we have:
$$
\varphi_{IJK}(\lambda,\mu,\nu)\leq 0.
$$
\end{theo}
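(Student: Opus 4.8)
Prove Theorem~\ref{th:Horn}: a triple $(\lambda,\mu,\nu)\in(\Lambda_n^+)^3$ lies in ${\rm Horn}(n)$ iff $|\lambda|+|\mu|+|\nu|=0$ and $\varphi_{IJK}(\lambda,\mu,\nu)\le 0$ for all $r\in[1;n-1]$ and all $(I,J,K)\in\Part(r,n)^3$ with $\sigma_I\sigma_J\sigma_K=[{\rm pt}]$.

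The plan is to obtain Theorem~\ref{th:Horn} by combining the saturation theorem (Theorem~\ref{th:KTsat}) with Belkale's cohomological description of the real Horn cone (Theorem~\ref{th:Belk}); the only work is to reconcile the two notational frameworks. In a nutshell: Theorem~\ref{th:KTsat} identifies ${\rm Horn}(n)$ with ${\rm Horn}_\RR(n)\cap(\Lambda_n^+)^3$, while Theorem~\ref{th:Belk} describes ${\rm Horn}_\RR(n)$ by precisely the inequalities appearing in the statement.

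First I would dispose of the trace condition. If $|\lambda|+|\mu|+|\nu|\neq 0$, the centre of $G=\GL_n(\CC)$ acts by a nontrivial character on $V_\lambda\otimes V_\mu\otimes V_\nu$, so that $c^n_{\lambda\mu\nu}=\dim(V_\lambda\otimes V_\mu\otimes V_\nu)^G=0$ and $(\lambda,\mu,\nu)\notin{\rm Horn}(n)$; on the other side the identity $|\lambda|+|\mu|+|\nu|=0$ also fails, so both sides of the asserted equivalence are false. Hence one may assume $|\lambda|+|\mu|+|\nu|=0$ throughout.

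Next comes the reduction to the real cone. Since $c^n_{\lambda\mu\nu}=c_{\lambda\mu}^{\nu^\vee}$ and $(\nu^\vee)^\vee=\nu$, Theorem~\ref{th:KTsat} gives $c^n_{\lambda\mu\nu}\neq 0\iff(\lambda,\mu,\nu)\in{\rm Horn}_\RR(n)$, that is, ${\rm Horn}(n)={\rm Horn}_\RR(n)\cap(\Lambda_n^+)^3$. One then applies Theorem~\ref{th:Belk} to $(\lambda,\mu,\nu)$ viewed merely as a triple of non-increasing real sequences --- $\Lambda_n^+$ being a subset of these --- which characterizes membership in ${\rm Horn}_\RR(n)$ by the trace identity together with the inequalities $\sum_{i\in I}\lambda_i+\sum_{j\in J}\mu_j+\sum_{k\in K}\nu_k\leq 0$ for all $r\in[1;n-1]$ and all $(I,J,K)\in\Part(r,n)^3$ with $\sigma_I\sigma_J\sigma_K=[{\rm pt}]$ in $\H^*(\Gr(r,n),\ZZ)$. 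The last step is to recognize this as Theorem~\ref{th:Horn}: the trace identity is $|\lambda|+|\mu|+|\nu|=0$, and by the definitions of $\lambda_I$ and of the form $\varphi_{IJK}$ the displayed inequality is exactly $\varphi_{IJK}(\lambda,\mu,\nu)\leq 0$.

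I do not expect a genuine obstacle here --- all the substance is imported from \cite{Belk:P1} and \cite{KT:saturation} --- so the only delicate points are bookkeeping. One must check that the Schubert-class convention used in Theorem~\ref{th:Belk} is the one fixed in Paragraph~\ref{par:SchubGr}, so that $[{\rm pt}]=\sigma_{[1;r]}$ denotes the same class in both statements; that the passage from $c^n_{\lambda\mu\nu}$ to $c_{\lambda\mu}^{\nu^\vee}$ inverts Poincar\'e duality in the correct direction; and that Theorem~\ref{th:KTsat}, stated above for partitions, applies to all triples in $(\Lambda_n^+)^3$, which is the general $\GL_n$ form of the Knutson-Tao saturation theorem.
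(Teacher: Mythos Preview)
Your proposal is correct and matches the paper's own treatment: the paper does not give a detailed argument but simply states that Theorem~\ref{th:Horn} is obtained by combining \cite{Belk:P1} (Theorem~\ref{th:Belk}) and \cite{KT:saturation} (the saturation theorem, Theorem~\ref{th:KTsat}), which is precisely the route you take. Your bookkeeping remarks are apt, and the reduction from $\Lambda_n^+$ to partitions via a twist by a power of the determinant is standard.
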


\section{Quivers}
\label{sec:quiver}

In this section, we  explain how a Derksen-Weyman's algorithm 
(see \cite{DW:algo}) on quiver representations 
can be applied to decide if a product of three flag manifolds
contains a dense orbit or not.
Let us first introduce standard notation on quivers.

\subsection{Definitions}
\label{sec:defcarquois}

Let $Q$ be a quiver (that is, a finite oriented graph) with vertexes $Q_0$ and arrows $Q_1$.
We assume that $Q$ has no oriented cycle.
An arrow $a\in Q_1$ has initial vertex $ia$ and terminal one $ta$.
A representation $R$ of $Q$ is a family $(V(s))_{s\in Q_0}$ of finite dimensional vector spaces and 
a family of linear maps $u(a)\in {\rm Hom}(V(ia),V(ta))$ indexed by $a\in Q_1$.
The dimension vector of $R$ is the family $(\dim(V(s)))_{s\in Q_0}\in \NN^{Q_0}$.

Let us fix $\alpha\in \NN^{Q_0}$ and  a vector space $V(s)$ of dimension $\alpha(s)$ for each $s\in Q_0$. 
Set
$$
\Rep(Q,\alpha)=\bigoplus_{a\in Q_1}{\rm Hom}(V(ia),V(ta)).
$$
The group
$
\GL(\alpha)=\prod_{s\in Q_0}\GL(V(s))
$
acts naturally on $\Rep(Q,\alpha)$.

\subsection{Our question in terms of quivers}

Let $\{a_1<\cdots<a_p\}$ be a part of $[1;n-1]$.
We will consider the following flag variety:
$$
\Fl_n(a_1,\cdots,a_p):=\{(V_1\subset\cdots\subset V_p)\}\subset\Gr(a_1,n)\times
\cdots\times\Gr(a_p,n).
$$
Let also $\{b_1<\cdots<b_q\}$ and  $\{c_1<\cdots<c_r\}$ be two other parts
 of $[1;n]$. Recall that we want to decide if
$\Fl_n(a_1,\cdots,a_{p})\times\Fl_n(b_1,\cdots,b_{q})\times\Fl_n(c_1,\cdots,c_{r})$ contains a dense $\GL_n$-orbit.

Consider the following quiver $T_{pqr}$ with $p+q+r+1$ vertexes and  $p+q+r$ arrows:

\begin{diagram}[size=2em]
   &    &x_1&\rTo&x_2&\rTo&x_3&\rDots&x_{p}\\
   &    &   &    &   &    &   &      &       &\rdTo\\
y_1&\rTo&y_2&\rTo&y_3&    &   &\rDots&y_{q}&\rTo&x_{p+1}=y_{q+1}=z_{r+1}.\\
 &&   &    &   &    &   &      &       &\ruTo\\
   && & &z_1&\rTo&z_2&\rDots   &z_{r}\\
\end{diagram}

Consider the following vector dimension $\alpha$ of $T_{pqr}$:
$$
\alpha=\left\{
\begin{diagram}[size=2em]
   &    &a_1&\rTo&a_2&\rTo&a_3&\rDots&a_{p}\\
   &    &   &    &   &    &   &      &       &\rdTo\\
b_1&\rTo&b_2&\rTo&b_3&    &   &\rDots&b_{q}&\rTo&n.\\
 &&   &    &   &    &   &      &       &\ruTo\\
   && & &c_1&\rTo&c_2&\rDots   &c_{r}\\
\end{diagram}\right.
$$
We have the well known

\begin{lemma}\label{lem:quiverflag}
We recall that $\alpha$ is increasing on each harm.
  Then, the following are equivalent:
  \begin{enumerate}
  \item $\Fl_n(a_1,\cdots,a_{p})\times\Fl_n(b_1,\cdots,b_{q})\times\Fl_n(c_1,\cdots,c_{r})$ 
contains a dense $\GL_n$-orbit;
\item $\Rep(T_{pqr},\alpha)$ contains a dense $\GL(\alpha)$-orbit.
  \end{enumerate}
\end{lemma}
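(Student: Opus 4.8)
The plan is to identify both dense-orbit conditions with the dense-orbit question for a single reductive group acting on a space of quiver representations, by rewriting the flag variety $\Fl_n(a_1,\dots,a_p)$ as the variety of surjections in a way compatible with the arrow structure of $T_{pqr}$. First I would observe that a point of $\Fl_n(a_1,\dots,a_p)$ is the same datum as a chain of injective linear maps $V_1\hookrightarrow V_2\hookrightarrow\cdots\hookrightarrow V_p\hookrightarrow \CC^n$ with $\dim V_i=a_i$, modulo the action of $\prod_i\GL(V_i)$; equivalently, once we fix reference spaces $V(x_i)$ of dimension $a_i$, it is the set of tuples $(u(a))_a\in\bigoplus_i\Hom(V(x_i),V(x_{i+1}))$ (with $V(x_{p+1})=\CC^n$) all of whose composites are injective, modulo $\prod_{i=1}^p\GL(V(x_i))$. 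The open condition ``all composites injective'' cuts out a dense open $\GL(\alpha)'$-stable subset of the corresponding $\Rep$ summand, where $\GL(\alpha)'$ is the product of the $\GL(V(s))$ over the non-central vertices. Doing this for all three arms of $T_{pqr}$ and observing that the central vertex $x_{p+1}=y_{q+1}=z_{r+1}$ carries $\CC^n$ with $\GL_n$ acting, one sees that
$$
\Fl_n(\mathbf a)\times\Fl_n(\mathbf b)\times\Fl_n(\mathbf c)
$$
is the geometric quotient of an open dense $\GL(\alpha)$-stable subset $U\subseteq\Rep(T_{pqr},\alpha)$ by the subgroup $\prod_{s\neq \text{center}}\GL(V(s))$, and the residual $\GL_n$-action on the quotient is the one in part (i).

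Next I would invoke the general principle that, for a quotient map $\pi\colon U\to U/H$ by a normal-complemented subgroup $H$ of $G=\GL(\alpha)$ with $G/H\cong\GL_n$, a $G$-orbit is dense in $U$ (hence in $\Rep(T_{pqr},\alpha)$, since $U$ is dense) if and only if its image is a dense $\GL_n$-orbit in $U/H=\Fl_n(\mathbf a)\times\Fl_n(\mathbf b)\times\Fl_n(\mathbf c)$. Concretely: density of a $G$-orbit $G\cdot x$ in the irreducible variety $\Rep(T_{pqr},\alpha)$ is equivalent to $\dim G\cdot x=\dim\Rep(T_{pqr},\alpha)$, and since $\dim\Rep(T_{pqr},\alpha)=\dim H+\dim(\Fl_n(\mathbf a)\times\Fl_n(\mathbf b)\times\Fl_n(\mathbf c))$ while the $H$-action on fibres of $\pi$ is free (injectivity of the composites forces trivial stabiliser in each $\GL(V(x_i))$ etc.), the fibres of $\pi$ are single $H$-orbits of dimension $\dim H$; hence $\dim G\cdot x=\dim H+\dim \GL_n\cdot\pi(x)$, and the orbit is dense iff $\GL_n\cdot\pi(x)$ is dense. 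Running this equivalence in both directions gives (i) $\Leftrightarrow$ (ii).

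The step I expect to be the main obstacle is the bookkeeping that makes the fibre of $\pi\colon U\to \Fl_n(\mathbf a)\times\cdots$ a single free $H$-orbit: one must check that an arm of injective composites $V(x_1)\to\cdots\to V(x_p)\to\CC^n$ is determined up to the $\prod_{i=1}^p\GL(V(x_i))$-action precisely by the flag of images in $\CC^n$, with trivial stabiliser, and that this is uniform across the three arms and compatible at the common central vertex. This is where the hypothesis that $\alpha$ is (strictly) increasing along each arm is used — it guarantees that ``all composites injective'' is a non-empty open condition and that the images form a genuine flag of the stated dimensions. Everything else is a dimension count using irreducibility of $\Rep(T_{pqr},\alpha)$ and of the flag product, together with the standard fact that a homogeneous space $G/H$ has a dense $G$-orbit under an induced action on $U/H$ exactly when the total space has a dense $G$-orbit.
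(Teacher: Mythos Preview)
Your argument is correct and is essentially the same approach as the paper's own proof: both pass through the open dense locus of representations with injective arrow maps, build the flag $(\xi_x,\xi_y,\xi_z)$ from the images of the composites, and identify the product of flag varieties with the quotient of this open locus by $H=\prod_{s\neq\mathrm{center}}\GL(V(s))$. The paper's proof is much terser---it simply writes down the flag and then asserts ``it is easy to see'' that the two density conditions are equivalent---whereas you have unpacked that sentence into the explicit fibre/dimension analysis (free $H$-action on $U$, single $H$-orbit fibres, and the resulting identity $\dim G\cdot x=\dim H+\dim\GL_n\cdot\pi(x)$), which is exactly what justifies the paper's claim.
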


\begin{proof}
Let $R$ be a general representation of $T_{pqr}$ of dimension $\alpha$.
If $s$ is a vertex of $T_{pqr}$, $V(s)$ denotes the vector space of $R$ at $s$ and
$u(s)$ the linear map (if there exists) associated to the arrow $a$ in $T_{pqr}$
such that $ia=s$.
Since $\alpha$ is increasing on each harm, for any arrow $a$,
 the linear map $u(a)$ is injective.
In particular, the flag:
$$
\xi_x=\bigg (V(x_{p+1})\supset u(x_{p})(V(x_{p}))\supset (u(x_{p})\circ u(x_{p-1}))(V(x_{p-1}))\supset\cdots\bigg )
$$
has dimension $n> a_{p}> a_{p-1}\cdots$.
So, we obtain a point $(\xi_x,\xi_y,\xi_z)$ in 
$\Fl_n(a_1,\cdots,a_{p})\times\Fl_n(b_1,\cdots,b_{q})\times\Fl_n(c_1,\cdots,c_{r})$.
It is easy to see that $\GL(\alpha).R$ is dense in $\Rep(T_{pqr},\alpha)$ if and only if
$\GL_n.(\xi_x,\xi_y,\xi_z)$ is dense in 
$\Fl_n(a_1,\cdots,a_{p})\times\Fl_n(b_1,\cdots,b_{q})\times\Fl_n(c_1,\cdots,c_{r})$.
\end{proof}

\subsection{A Kac theorem}

We follow notation of Section~\ref{sec:defcarquois}.
We now recall a Kac's theorem which gives a criterion to decide if
$\Rep(Q,\alpha)$ contains a dense $\GL(\alpha)$-orbit.

We call $\alpha = \alpha_1+\cdots+\alpha_s$
the {\it canonical decomposition of $\alpha$} if a general
representation of dimension $\alpha$ is a direct sum of
 indecomposable representations of dimensions $\alpha_1$, $\alpha_2,\cdots,\alpha_s$.

For $\alpha,\beta\in\NN^{Q_0}$, the Ringle form is defined by:
$$
\langle\alpha,\beta\rangle=\sum_{s\in Q_0}\alpha(s)\beta(s)-
\sum_{a\in Q_1}\alpha(ia)\beta(ta).
$$

\begin{theo}(see \cite[Proposition~4]{Kac:carquois2})\label{th:Kac}
Let $\alpha=\alpha_1+\cdots+\alpha_s$ be the canonical decomposition of $\alpha$. 
Then $\Rep(Q,\alpha)$ contains a dense $\GL(\alpha)$-orbit 
if and only if for any $i=1,\cdots,s$ we have $\langle \alpha_i,\alpha_i\rangle=1$.
\end{theo}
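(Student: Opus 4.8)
The plan is to reduce the statement to the classical principle that \emph{a dense orbit corresponds to a rigid generic representation}, and then to feed in the structural properties of the canonical decomposition.

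First I would recall that, since $Q$ has no oriented cycle, its path algebra is hereditary, so for any representation $R$ of dimension $\alpha$ there is a four term exact sequence
$$
0\longto \Hom(R,R)\longto \bigoplus_{s\in Q_0}\Hom(V(s),V(s))\stackrel{d\rho_R}{\longto}\Rep(Q,\alpha)\longto \mathrm{Ext}^1(R,R)\longto 0,
$$
where $d\rho_R$ is the differential at the identity of the orbit map $\GL(\alpha)\to\GL(\alpha)\cdot R$, sending $(\phi_s)_s$ to $(\phi_{ta}u(a)-u(a)\phi_{ia})_a$. Hence the codimension of $\GL(\alpha)\cdot R$ in $\Rep(Q,\alpha)$ equals $\dim\mathrm{Ext}^1(R,R)$, and $\Rep(Q,\alpha)$ has a dense $\GL(\alpha)$-orbit if and only if a general representation $R$ of dimension $\alpha$ is rigid, that is $\mathrm{Ext}^1(R,R)=0$. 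Writing $\mathrm{ext}(\beta,\gamma)$ for the generic value of $\dim\mathrm{Ext}^1$ between general representations of dimensions $\beta$ and $\gamma$ (well defined by semicontinuity), the criterion reads $\mathrm{ext}(\alpha,\alpha)=0$.

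Next I would bring in the canonical decomposition $\alpha=\alpha_1+\cdots+\alpha_s$: a general representation of dimension $\alpha$ is a general direct sum $R_1\oplus\cdots\oplus R_s$ with each $R_i$ a general representation of dimension $\alpha_i$, so additivity of $\mathrm{Ext}^1$ along direct sums gives $\mathrm{ext}(\alpha,\alpha)=\sum_{i,j}\mathrm{ext}(\alpha_i,\alpha_j)$. By the defining property of the canonical decomposition, $\mathrm{ext}(\alpha_i,\alpha_j)=0$ whenever $i\neq j$, so $\mathrm{ext}(\alpha,\alpha)=\sum_i\mathrm{ext}(\alpha_i,\alpha_i)$. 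Moreover each $\alpha_i$ is a Schur root, meaning the general representation of dimension $\alpha_i$ has endomorphism ring $\CC$; since the algebra is hereditary the Ringel form computes $\langle\alpha_i,\alpha_i\rangle=\dim\Hom(R_i,R_i)-\dim\mathrm{Ext}^1(R_i,R_i)=1-\mathrm{ext}(\alpha_i,\alpha_i)$. In particular $\langle\alpha_i,\alpha_i\rangle\leq 1$, with equality precisely when $\mathrm{ext}(\alpha_i,\alpha_i)=0$.

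Combining these facts, $\Rep(Q,\alpha)$ contains a dense $\GL(\alpha)$-orbit if and only if $\sum_i\bigl(1-\langle\alpha_i,\alpha_i\rangle\bigr)=0$, and since each summand is nonnegative this holds if and only if $\langle\alpha_i,\alpha_i\rangle=1$ for every $i$. The soft, conceptual part is the reduction to rigidity in the first step; the genuine content — additivity of generic $\mathrm{Ext}^1$ over the canonical decomposition, the vanishing $\mathrm{ext}(\alpha_i,\alpha_j)=0$ for $i\neq j$, and the Schur-root inequality $\langle\alpha_i,\alpha_i\rangle\leq 1$ — is exactly Kac's theory of the canonical decomposition (with later refinements of Schofield), so the main obstacle is to assemble these ingredients correctly rather than to reprove them here.
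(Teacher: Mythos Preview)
The paper does not give its own proof of this theorem: it is quoted verbatim as \cite[Proposition~4]{Kac:carquois2} and used as a black box, with no argument supplied. So there is nothing in the paper to compare your proposal against.

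That said, your sketch is the standard route and is essentially correct. The reduction to rigidity via the Ringel exact sequence, the splitting $\mathrm{ext}(\alpha,\alpha)=\sum_{i,j}\mathrm{ext}(\alpha_i,\alpha_j)$, the vanishing of the off-diagonal terms coming from the canonical decomposition, and the identity $\langle\alpha_i,\alpha_i\rangle=1-\mathrm{ext}(\alpha_i,\alpha_i)$ for Schur roots are exactly the ingredients in Kac's original argument. One point worth being precise about if you write this out in full: the canonical decomposition may repeat a summand, and you should check that the off-diagonal vanishing $\mathrm{ext}(\alpha_i,\alpha_j)=0$ for $i\neq j$ is really part of the definition or characterization you are invoking (it is, in Kac's and Schofield's formulations), rather than something you are silently assuming.
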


In \cite{DW:algo}, Derksen-Weyman describe an efficient algorithm to compute the canonical decomposition
of a vector dimension. With Theorem~\ref{th:Kac}, this gives {\bf an algorithm to decide if $\Rep(Q,\alpha)$ contains a dense $\GL(\alpha)$-orbit.}
Combining this remark with Lemma~\ref{lem:quiverflag}, we obtain

\begin{prop}\label{prop:DWalgo}
Derksen-Weyman's algorithm allows to decide if the variety  
$$\Fl_n(a_1,\cdots,a_{p})\times\Fl_n(b_1,\cdots,b_{q})\times\Fl_n(c_1,\cdots,c_{r})$$
contains a dense $\GL_n$-orbit.
\end{prop}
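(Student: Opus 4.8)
\ref{prop:DWalgo}

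The plan is to assemble the proposition from the two ingredients already in hand: Lemma~\ref{lem:quiverflag}, which translates the existence of a dense $\GL_n$-orbit on the triple product of flag varieties into the existence of a dense $\GL(\alpha)$-orbit on $\Rep(T_{pqr},\alpha)$ for the explicit quiver $T_{pqr}$ and dimension vector $\alpha$ described above; and Theorem~\ref{th:Kac} (Kac), which characterizes the latter density in terms of the canonical decomposition $\alpha=\alpha_1+\cdots+\alpha_s$, namely that $\Rep(Q,\alpha)$ has a dense orbit iff $\langle\alpha_i,\alpha_i\rangle=1$ for every $i$. The only remaining input is that Derksen--Weyman's algorithm (\cite{DW:algo}) computes the canonical decomposition of $\alpha$ effectively; this is cited as a known fact.

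The proof then runs as follows. First I would fix the data $\{a_1<\cdots<a_p\}\subset[1;n-1]$, $\{b_1<\cdots<b_q\}$, $\{c_1<\cdots<c_r\}$ and form the quiver $T_{pqr}$ with the dimension vector $\alpha$ as displayed. Note that $T_{pqr}$ has no oriented cycle, so Kac's theorem applies. Next, run Derksen--Weyman's algorithm on $(T_{pqr},\alpha)$ to produce the canonical decomposition $\alpha=\alpha_1+\cdots+\alpha_s$; this is a finite procedure. Then compute, for each summand, the Ringel form $\langle\alpha_i,\alpha_i\rangle=\sum_{s\in (T_{pqr})_0}\alpha_i(s)^2-\sum_{a\in(T_{pqr})_1}\alpha_i(ia)\alpha_i(ta)$, a finite arithmetic check. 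By Theorem~\ref{th:Kac}, $\Rep(T_{pqr},\alpha)$ contains a dense $\GL(\alpha)$-orbit if and only if all these values equal $1$. Finally, invoke Lemma~\ref{lem:quiverflag}: since $\alpha$ is increasing along each arm of $T_{pqr}$ (because the $a_i$, $b_j$, $c_k$ are strictly increasing and bounded by $n$), the hypothesis of the lemma is met, and the dense-orbit condition on $\Rep(T_{pqr},\alpha)$ is equivalent to the dense-orbit condition on $\Fl_n(a_1,\cdots,a_p)\times\Fl_n(b_1,\cdots,b_q)\times\Fl_n(c_1,\cdots,c_r)$. Chaining these equivalences yields a decision procedure, which is exactly the assertion of the proposition.

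There is essentially no hard mathematical obstacle here, since the substantive work is done by the cited results; the proposition is a bookkeeping corollary. The only point that needs a word of care is checking the hypothesis of Lemma~\ref{lem:quiverflag} is literally satisfied by the constructed $\alpha$, i.e. that $\alpha$ is increasing on each ``arm'' of $T_{pqr}$ and that the terminal vertex carries dimension $n$. This is immediate from the strict monotonicity of the index sets and from $a_p,b_q,c_r\le n$ (indeed $a_p\le n-1<n$). So the proof is a two-line chain of ``if and only if'' statements once the setup is recalled, and it can be written essentially as: ``By Lemma~\ref{lem:quiverflag} the variety contains a dense $\GL_n$-orbit iff $\Rep(T_{pqr},\alpha)$ contains a dense $\GL(\alpha)$-orbit; by Theorem~\ref{th:Kac} and the effectiveness of Derksen--Weyman's algorithm this last condition is decidable.''
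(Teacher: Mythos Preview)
Your proof is correct and follows exactly the paper's approach: the proposition is stated as an immediate corollary obtained by combining Lemma~\ref{lem:quiverflag} with Theorem~\ref{th:Kac} and the fact that Derksen--Weyman's algorithm computes the canonical decomposition. Your additional remark verifying that $\alpha$ is increasing along each arm is a reasonable bit of care, though the paper simply takes this as given from the setup.
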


\begin{remark}
  It would be interesting to have a classification of the  triples of parabolic subgroups $(P,Q,R)$
of $G=\GL_n$ such that $G/P\times G/Q\times G/R$ contains an open $G$-orbit; instead an algorithm to 
decide if it is. In \cite{MWZ}, Magyar-Weyman-Zelevinsky gives a classification of such triples such that
 $G/P\times G/Q\times G/R$ contains finitely many orbits. If one of $P$, $Q$, $R$ is a Borel subgroup
these two conditions are actually equivalent. Indeed, if  $G/B\times
G/Q\times G/R$ contains an open $G$-orbit,
$G/Q\times G/R$ is a spherical $G$-variety and contains by \cite{Br:BGHfini} finitely many $B$-orbits.
The case when $P=Q=R$ is maximal was obtained in \cite{Popov:gentrans}.
\end{remark}

\section{An algorithm}
\label{sec:algogen}

\subsection{Description of the algorithm}
\label{sec:algo}

In this section, we give an inductive (on $n$) algorithm to decide if
a given element of $(\Lambda_n^+)^3$ belong to 
the following subset of  ${\rm Horn}(n)$:

$$
{\rm Horn}^1(n):=\{(\lambda,\mu,\nu)\in(\Lambda_n^+)^3\,:\,c^n_{\lambda\mu\nu}=1\}.
$$

If $\lambda\in\Lambda_n^+$, we define the type of $\lambda$ by 
$$
{\rm type}(\lambda):=\{j=1,\cdots,n-1\ |\ \lambda_j\neq\lambda_{j+1}\}.
$$

 The following procedure {\tt IsLR$01$} takes an integer
 $n\geq 1$ and three elements $\lambda,\mu$ and $\nu$ in $\Lambda_n^+$
 as arguments.
It answers either $c=c^n_{\lambda\mu\nu}=0$ or $c=1$ or $c\geq 2$.

\bigskip
{\tt IsLR$01$ $(n,\lambda,\mu,\nu)$
\begin{enumerate}
\item  \begin{tabular}[t]{lll} 
If &$n=1$ then\\ 
& (if $\lambda_1+\mu_1+\nu_1=0$& 
 
    then decide $c=1$
  and stop\\
 &&else decide $c=0$ and stop)
  \end{tabular}

\item\label{casei} 
  \begin{tabular}[t]{rl}
    For every& $r=1,\cdots,n-1$
           and  $(I,\,J,\,K)$ in $\Part(r,n)$  s.t.\\&
           IsLR$01(r,\lambda^I,\lambda^J,\lambda^K-2(n-r)1^r)=1$
           do
  \end{tabular}

\begin{enumerate}
\item
Compute $\phi=\varphi_{IJK}(\lambda,\mu,\nu)$.
\item\label{caseib}
If $\phi>0$ then decide $c=0$ and stop.
\item \label{caseic}If 
  \begin{tabular}[t]{l}
$\phi=0$ then\\
Using recursively  IsLR$01$ with $r$\\ 
\ \ \ decide if $c_1=c_{\lambda_{I},\mu_{J},\nu_{K}}^r=0, 1$ or is $\geq 2$.\\
Using recursively  IsLR$01$ with $n-r$\\
\ \ \ decide if
 $c_2= c_{\lambda_{I^c},\mu_{J^c},\nu_{K^c}}^{n-r}=0, 1$ or is $\geq 2$.\\
\begin{tabular}[t]{ll}
  If $c_1=0$ or $c_2=0$ then& decide $c=0$ and stop.\\ 
If $c_1=1$ and $c_2=1$ then &decide $c=1$ and stop.\\ 
Otherwise  & decide $c\geq 2$ and stop.\\ 
\end{tabular}
\end{tabular}
\end{enumerate}
\item \label{caseii} If step \ref{casei} was inconclusive, then 
check if $$\Fl_n({\rm type}(\lambda))\times \Fl_n({\rm type}(\mu))
\times \Fl_n({\rm type}(\nu))$$ 
contains a dense $\GL_n$-orbit (using the algorithm  
of Section~\ref{sec:quiver}).\\
\begin{tabular}[l]{ll}
  If it does &then decide $c=1$ and stop \\ &else decide $c\geq 2$ and stop.
\end{tabular}
\end{enumerate}
}

\bigskip
The proof of the algorithm need some preparation.

\subsection{Modularity and GIT}

\paragraphe\label{par:nonstandardGIT}{\bf Non-standard GIT.}\ 
Let $G$ be a reductive group acting on an irreducible projective variety $X$.  
Let $\Pic^G(X)$ denote the group of $G$-linearized line bundles on $X$.
For $\Li\in\Pic^G(X)$, we denote by ${\rm H}^0(X,\Li)$ the $G$-module of regular sections of $\Li$ and
by  ${\rm H}^0(X,\Li)^G$  the subspace of $G$-invariant sections.
For any $\Li\in\Pic^G(X)$, 
we set 
$$
X^{\rm ss}(\Li)=\{x\in X \,:\,\exists n>0{\rm\ and\ }\sigma\in{\rm H}^0(X,\Li^{\otimes n})^G 
\  {\rm s.\,t.}\ \sigma(x)\neq 0\}.
$$
Note that this definition of $X^{\rm ss}(\Li)$ is as in \cite{GIT} if $\Li$ is ample
but not in general.
We consider the following projective variety:
$$
X^{\rm ss}(\Li)\quot G:={\rm Proj}\bigoplus_{n\geq 0}{\rm H}^0(X,\Li^{\otimes n})^G,
$$
and the natural $G$-invariant morphism
$$
\pi\,:\,X^{\rm ss}(\Li)\longto X^{\rm ss}(\Li)\quot G.
$$
If $\Li$ is ample $\pi$ is a good quotient. 

\bigskip\paragraphe
We denote by ${\rm mod}(X,G)$ the minimal codimension of 
$G$-orbits in $X$.  Recall that $X$ is projective, but note that the notation 
${\rm mod}(X,G)$ will be used for any irreducible $G$-variety $X$.

\begin{prop}\label{prop:modmax}
We assume that $X$ is smooth.
  The maximal of the dimensions of the varieties
$X^{\rm ss}(\Li)\quot G$ for $\Li\in\Pic^G(X)$ is equal to
  ${\rm mod}(X,G)$.
\end{prop}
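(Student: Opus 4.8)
The plan is to prove both inequalities separately. For the inequality $\dim(X^{\rm ss}(\Li)\quot G)\leq {\rm mod}(X,G)$ for every $\Li\in\Pic^G(X)$, I would argue as follows. The morphism $\pi\,:\,X^{\rm ss}(\Li)\longto X^{\rm ss}(\Li)\quot G$ is $G$-invariant and dominant, so its general fibre has dimension at least $\dim X^{\rm ss}(\Li)-\dim(X^{\rm ss}(\Li)\quot G)$. On the other hand, each fibre of $\pi$ is a union of $G$-orbits (it is $G$-stable), and on a dense open subset of $X^{\rm ss}(\Li)$ the orbit dimension is $\dim X-{\rm mod}(X,G)$ (note $X^{\rm ss}(\Li)$ is dense in $X$ when nonempty, since $X$ is irreducible and $X^{\rm ss}(\Li)$ is open). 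Hence a general fibre contains an orbit of dimension $\dim X-{\rm mod}(X,G)$, giving $\dim X-\dim(X^{\rm ss}(\Li)\quot G)\leq\dim X-{\rm mod}(X,G)$, i.e.\ the desired bound. (When $X^{\rm ss}(\Li)$ is empty the variety $X^{\rm ss}(\Li)\quot G$ is empty and there is nothing to prove.)

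For the reverse inequality, the task is to exhibit a single $\Li$ with $\dim(X^{\rm ss}(\Li)\quot G)={\rm mod}(X,G)$. The natural strategy is to take a generic orbit $\Orb=G\cdot x$ of minimal codimension $m:={\rm mod}(X,G)$ and produce $G$-invariant sections of powers of some line bundle that separate points of a transverse slice to $\Orb$. Concretely, I would first reduce to the case where $X$ carries an ample $G$-linearized line bundle (embed $X$ $G$-equivariantly in $\PP(V)$ for a $G$-module $V$), so that standard GIT applies and $\pi$ is a good quotient with $\dim(X^{\rm ss}(\Li)\quot G)$ equal to the maximal dimension of a $G$-orbit closure meeting $X^{\rm ss}(\Li)$... actually the cleanest route is: choose $\Li$ ample and a point $x$ in a minimal-codimension orbit that is moreover $\Li$-semistable for a suitable twist; then $\dim(X^{\rm ss}(\Li)\quot G)\geq\dim\overline{G\cdot x}-\dim G\cdot x = \codim_X(G\cdot x)$ is false in general — so instead I would invoke that for $\Li$ ample the quotient dimension equals $\dim X^{\rm ss}(\Li)-(\dim X - m')$ where $m'$ is the generic orbit codimension \emph{within} $X^{\rm ss}(\Li)$, and choose $\Li$ so that $X^{\rm ss}(\Li)$ is dense, forcing $m'=m$. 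The existence of such an $\Li$ (with $X^{\rm ss}(\Li)$ dense) is the crux: one takes $\Li$ to be a sufficiently general ample $G$-linearized bundle, or equivalently a general point of the ample cone in $\Pic^G(X)_\QQ$, and uses that a generic orbit is then semistable.

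The main obstacle I anticipate is precisely this last point: guaranteeing that some ample $\Li$ has $X^{\rm ss}(\Li)$ dense in $X$, equivalently that a generic orbit is semistable for a well-chosen linearization. This can fail for a fixed $\Li$ but the claim is that it holds for some $\Li$; the standard tool is the Hilbert--Mumford numerical criterion together with the fact that the set of $\Li$ for which a fixed generic $x$ is semistable is a nonempty (open, or at least nonempty) subset of the $G$-ample cone — this is where one would cite or adapt results from \cite{GIT} and the GIT-of-eigencone circle of ideas (e.g.\ the structure of the $G$-ample cone and its semistability chambers). Once density of $X^{\rm ss}(\Li)$ is secured, the dimension count $\dim(X^{\rm ss}(\Li)\quot G)=\dim X^{\rm ss}(\Li)-\dim(\text{generic fibre})=\dim X-(\dim X-m)=m$ closes the argument, matching the upper bound from the first paragraph.
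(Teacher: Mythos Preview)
Your upper bound is fine in spirit (though note that as written your displayed inequality ``$\dim X-\dim(X^{\rm ss}(\Li)\quot G)\leq\dim X-{\rm mod}(X,G)$'' points the wrong way; the correct chain is that the generic fibre has dimension \emph{equal} to $\dim X-\dim(X^{\rm ss}(\Li)\quot G)$ and \emph{at least} $\dim X-m$, whence $\dim(X^{\rm ss}(\Li)\quot G)\leq m$). The paper does this in one line: $\pi$ is $G$-invariant, so $\dim(X^{\rm ss}(\Li)\quot G)\leq{\rm mod}(X^{\rm ss}(\Li),G)={\rm mod}(X,G)$.

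The real gap is in your reverse inequality. You propose to find an \emph{ample} $\Li$ with $X^{\rm ss}(\Li)$ dense and then run a fibre-dimension count. Two problems. First, such an ample $\Li$ need not exist: for $X=G/B$ with the left $G$-action, every ample $\Li_\nu$ has ${\rm H}^0(X,\Li_\nu^{\otimes n})^G=0$, so $X^{\rm ss}(\Li)=\emptyset$ for all ample $\Li$; the paper's $X^{\rm ss}$ is deliberately defined for non-ample bundles precisely to avoid this. Second, even when $X^{\rm ss}(\Li)$ is dense and $\pi$ is a good quotient, the generic fibre need not be a single orbit (it is a single \emph{closed} orbit together with all orbits specializing to it), so your identity $\dim(\text{generic fibre})=\dim X-m$ requires the generic orbit to be \emph{stable}, which you have not arranged. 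You yourself flag the first point as ``the crux'' and leave it to unspecified chamber arguments; that is exactly where the argument breaks.

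The paper takes a completely different, constructive route that bypasses ampleness and stability altogether. By Rosenlicht, $m={\rm mod}(X,G)$ equals the transcendence degree of $\CC(X)^G$; pick algebraically independent $f_1,\dots,f_m\in\CC(X)^G$. Writing $\div(f_i)=D_i^0-D_i^\infty$ with $D_i^0,D_i^\infty$ effective (here smoothness of $X$ is used so that these Weil divisors are Cartier), one sets $\Li_i={\mathcal O}(D_i^0)$ with the unique $G$-linearization making the section $\sigma_i^0$ cutting out $D_i^0$ invariant; then $\sigma_i^\infty:=\sigma_i^0/f_i$ is also $G$-invariant. With $\Li=\Li_1\otimes\cdots\otimes\Li_m$ one forms $m+1$ invariant sections $\tau_0,\dots,\tau_m$ (suitable monomials in the $\sigma_i^0,\sigma_i^\infty$) and the resulting rational map $\theta:X^{\rm ss}(\Li)\dashrightarrow\PP^m$, which has $\tau_i/\tau_0=f_i$ and is therefore dominant. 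Since $\theta$ is defined by $G$-invariant sections of $\Li$ it factors through $\pi$, forcing $\dim(X^{\rm ss}(\Li)\quot G)\geq m$. No ampleness, no stability, no chamber structure needed; and this is also where the smoothness hypothesis actually enters, which your plan never uses.
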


\begin{proof}
  Let  $\Li\in\Pic^G(X)$.
Since $\pi$ is $G$-invariant, we have:
$$
\dim (X^{\rm ss}(\Li)\quot G)\leq{\rm mod}(X^{\rm ss}(\Li),G)={\rm mod}(X,G).
$$
Conversely, set $m={\rm mod}(X,G)$. It remains to construct $\Li$ such that 
$\dim(X^{\rm ss}(\Li)\quot G)\geq m$.
It is well known that $m$ is the transcendence degree of the field
$\CC(X)^G$ of $G$-invariant rational functions on $X$.
Let $f_1,\cdots,f_m$ be algebraically independent elements of $\CC(X)^G$.
For each $i=1,\cdots,m$, consider the two effective divisors $D_i^0$ and $D_i^\infty$ such that 
$\div(f_i)=D_i^0-D_i^\infty$.
Consider the line bundle $\Li_i={\mathcal O}(D_i^0)={\mathcal O}(D_i^\infty)$.
Let $\sigma_i^0$ be a regular section of $\Li_i$ such that $\div(\sigma_i^0)=D_i^0$.
Since $D_i^0$ is $G$-stable, there exists a unique $G$-linearization of  $\Li_i$ such that
$\sigma_i^0$ is $G$-invariant; we now consider $\Li_i$ endowed with this linearization.
There exists a unique section $\sigma_i^\infty$ of $\Li_i$ such that $f_i=\sigma_i^0/\sigma_i^\infty$;
since $f_i$ and $\sigma_i^0$ are $G$-invariant, so is $\sigma_i^\infty$. 

Set $\Li=\Li_1\otimes\cdots\otimes\Li_m$. Consider the following
$G$-invariant sections of $\Li$:
$$
\begin{array}{l}
\tau_i=\sigma_1^\infty\otimes\cdots\otimes\sigma_{i-1}^\infty\otimes\sigma_{i}^0\otimes
\sigma_{i+1}^\infty\otimes\cdots\otimes\sigma_m^\infty\ \ \ \forall i=1,\cdots,m,\\
\tau_0=\sigma_1^\infty\otimes\cdots\otimes\sigma_{m}^\infty.
\end{array}
$$
Consider now the rational map
$$
\begin{array}{rcl}
  \theta\,:\,X^{\rm ss}(\Li)&\longratto&\PP^m\\
x&\longmapsto&[\tau_0(x):\cdots:\tau_m(x)].
\end{array}
$$
Since $f_1,\cdots,f_m$ are algebraically independent, $\theta$ is
dominant. 
Since $\theta$ is defined by $G$-invariant sections of $\Li$, it  factors
by $\pi\,:\,X^{\rm ss}(\Li)\longto X^{\rm ss}(\Li)\quot G$. It follows that 
$\dim(X^{\rm ss}(\Li)\quot G)\geq m$.
\end{proof}

\bigskip\paragraphe
We assume here that $\Pic^G(X)$ has finite rank and consider the rational vector
space $\Pic^G(X)_\QQ:=\Pic^G(X)\otimes_\ZZ\QQ$.
Since $X^{\rm ss}(\Li)=X^{\rm ss}(\Li^{\otimes n})$ for any positive integer $n$, 
one can define $X^{\rm ss}(\Li)$ for any element in  $\Pic^G(X)_\QQ$.
The set of ample line bundles in $\Pic^G(X)$ generates an open convex cone 
$\Pic^G(X)_\QQ^+$ in  $\Pic^G(X)_\QQ$.  The following cone was defined in 
\cite{DH} and will be called the {\it ample GIT-cone}:

$$
\ac^G(X):=\{\Li\in\Pic^G(X)_\QQ^+\,:\,X^{\rm ss}(\Li)\neq\emptyset\}.
$$
Indeed, since the product of two nonzero $G$-invariant sections of two line 
bundles is a nonzero $G$-invariant section of the tensor product of the two line bundles, 
$\ac^G(X)$ is convex.
The following result is certainly well-known and can be deduced from 
\cite[Proposition~1.1]{GITEigen2}:

\begin{prop}
\label{prop:dimquotconst}
The dimension of $X^{\rm ss}(\Li)\quot G$ does not depend on $\Li$ in the relative 
interior of $\ac^G(X)$.
\end{prop}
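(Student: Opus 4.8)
The plan is to combine Proposition~\ref{prop:modmax} with a semicontinuity/convexity argument. First I would recall that for $\Li$ in the relative interior of $\ac^G(X)$, the semistable locus $X^{\rm ss}(\Li)$ is nonempty and open, and that $\dim(X^{\rm ss}(\Li)\quot G)={\rm mod}(X^{\rm ss}(\Li),G)$ whenever $\pi$ is a good quotient, i.e. when $\Li$ is ample (which holds on all of $\ac^G(X)$ by definition). So it suffices to show that ${\rm mod}(X^{\rm ss}(\Li),G)$ is constant on the relative interior. The key observation is that ${\rm mod}(X^{\rm ss}(\Li),G)$ equals the transcendence degree of $\CC(X^{\rm ss}(\Li))^G$, and since $X^{\rm ss}(\Li)$ is a dense open subset of $X$, this transcendence degree equals ${\rm mod}(X,G)$ as soon as $X^{\rm ss}(\Li)\neq\emptyset$. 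Thus the dimension is already constant --- equal to ${\rm mod}(X,G)$ --- on the whole set where semistability is nonempty and $\pi$ is a good quotient, which in particular includes the relative interior of $\ac^G(X)$.

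More carefully, the steps I would carry out are: (1) observe that $X^{\rm ss}(\Li)$ is a nonempty $G$-stable open subset of the irreducible variety $X$, hence irreducible with the same function field as $X$ after taking $G$-invariants in the sense that $\CC(X^{\rm ss}(\Li))^G=\CC(X)^G$; (2) invoke the fact (used already in the proof of Proposition~\ref{prop:modmax}) that ${\rm mod}$ of an irreducible $G$-variety equals the transcendence degree of its field of invariant rational functions, so ${\rm mod}(X^{\rm ss}(\Li),G)={\rm mod}(X,G)$; (3) note that for $\Li$ ample, $\pi:X^{\rm ss}(\Li)\to X^{\rm ss}(\Li)\quot G$ is a good quotient, and a good quotient of an irreducible variety has $\dim(X^{\rm ss}(\Li)\quot G)$ equal to $\dim X^{\rm ss}(\Li)-\dim(\text{general orbit})={\rm mod}(X^{\rm ss}(\Li),G)$; (4) conclude that $\dim(X^{\rm ss}(\Li)\quot G)={\rm mod}(X,G)$ for every $\Li$ with $X^{\rm ss}(\Li)\neq\emptyset$, in particular for all $\Li$ in $\ac^G(X)$, a fortiori in its relative interior.

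The one subtlety --- and the step I expect to require the cited reference \cite[Proposition~1.1]{GITEigen2} rather than the naive argument above --- is whether one really wants the statement only for the relative interior because on the boundary of $\ac^G(X)$ the quotient can jump down in dimension even though $X^{\rm ss}(\Li)$ is still nonempty (the generic orbit in $X^{\rm ss}(\Li)$ need not be closed in $X^{\rm ss}(\Li)$, so $\pi$ need not separate generic orbits there, and $\CC(X^{\rm ss}(\Li)\quot G)$ can be a proper subfield of $\CC(X)^G$). The clean way to handle this uniformly is to use that $X^{\rm ss}(\Li)$ is \emph{independent} of $\Li$ throughout the relative interior of $\ac^G(X)$ (this is the content of \cite[Proposition~1.1]{GITEigen2}: the relative interior is a single GIT-class), so in fact $X^{\rm ss}(\Li)\quot G$ is literally the same variety, not merely of the same dimension, for all such $\Li$. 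Hence the main obstacle is not a computation but correctly citing and applying the structure of GIT-classes: once one knows $X^{\rm ss}(\Li)$ is constant on the relative interior, the dimension statement is immediate.
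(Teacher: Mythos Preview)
The paper gives no argument here beyond the pointer to \cite[Proposition~1.1]{GITEigen2}, so there is little to compare against directly. Your plan, however, contains a genuine gap.

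You correctly note that steps (1)--(4) overshoot: if they worked they would give $\dim(X^{\rm ss}(\Li)\quot G)={\rm mod}(X,G)$ on all of $\ac^G(X)$, not just on the relative interior, and you rightly diagnose that step (3) fails whenever the generic orbit is not closed in $X^{\rm ss}(\Li)$ (so that $\CC(X^{\rm ss}(\Li)\quot G)$ is a proper subfield of $\CC(X)^G$). That diagnosis is sound.

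The gap is in your proposed repair. You assert that $X^{\rm ss}(\Li)$ is \emph{constant} throughout the relative interior of $\ac^G(X)$, and you attribute this to the cited proposition. That claim is false in general. The Dolgachev--Hu GIT fan typically has walls lying strictly in the interior of $\ac^G(X)$, across which $X^{\rm ss}$ jumps and the quotients differ by honest birational modifications (flips or contractions), not isomorphisms. A concrete example is $X=(\PP^1)^4$ with the diagonal $\SL_2$-action: the hyperplanes $a_i+a_j=a_k+a_l$ are interior walls of $\ac^G(X)$, and the coincidence loci $\{p_i=p_j\}$ move in and out of $X^{\rm ss}$ as one crosses them. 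So ``the quotients are literally the same variety'' is simply not available.

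What \emph{is} true, and what the cited reference presumably supplies, is that the \emph{dimension} stays constant across these interior walls even though $X^{\rm ss}$ does not. One way to see why is closer in spirit to the argument the paper uses immediately afterwards in Lemma~\ref{lem:modularityflag}: from an interior wall one can perturb into an adjacent maximal chamber $\Li'$ with $X^{\rm ss}(\Li')\subset X^{\rm ss}(\Li)$, obtaining a dominant (hence surjective) morphism $X^{\rm ss}(\Li')\quot G\to X^{\rm ss}(\Li)\quot G$; together with the trivial bound $\dim(X^{\rm ss}(\Li)\quot G)\le{\rm mod}(X,G)$ this pins the dimension. But that is not the argument you wrote. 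As it stands, your fix rests on a misidentification of what the GIT-chamber structure actually gives you.
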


\bigskip\paragraphe
We now consider the case when $X$ is a product of flag manifolds:

\begin{lemma}\label{lem:modularityflag}
We assume that $X$ is a product of flag manifolds for $G$ and that  $\ac^G(X)$ is nonempty.
For any $\Li$ in the relative interior of $\ac^G(X)$, the dimension of $X^{\rm ss}(\Li)\quot G$
equals ${\rm mod}(X,G)$.
\end{lemma}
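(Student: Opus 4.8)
The plan is to deduce Lemma~\ref{lem:modularityflag} from the two preceding results, Proposition~\ref{prop:modmax} and Proposition~\ref{prop:dimquotconst}, by showing that for a product of flag manifolds the maximal dimension of the GIT quotients is in fact attained at every $\Li$ in the relative interior of $\ac^G(X)$. Since $X$ is smooth (being a product of flag manifolds), Proposition~\ref{prop:modmax} gives that the maximum over $\Li\in\Pic^G(X)$ of $\dim(X^{\rm ss}(\Li)\quot G)$ equals ${\rm mod}(X,G)$. On the other hand, Proposition~\ref{prop:dimquotconst} tells us that this dimension is a constant, say $d$, on the relative interior of $\ac^G(X)$. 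So the only thing to check is that $d$ equals the maximum, i.e.\ that no $\Li$ (ample or not) can do strictly better than a generic ample one.

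The key step is therefore to argue that the $\Li$ constructed in the proof of Proposition~\ref{prop:modmax} can be taken ample, or that its quotient dimension is matched by an ample one. Here I would use that on a product of flag manifolds $X=G/P_1\times\cdots\times G/P_k$ every line bundle is a tensor product of pullbacks of line bundles from the factors, the ample cone is the full product of the (open, full-dimensional) ample cones of the factors, and crucially $\Pic^G(X)=\Pic(X)$ since $G$ is connected semisimple (or because each $G/P_i$ has a canonical $G$-linearization, the linearization being unique). Thus $\ac^G(X)$ is a full-dimensional cone in $\Pic^G(X)_\QQ$ whenever it is nonempty. Now given the (possibly non-ample) bundle $\Li_0$ with $\dim(X^{\rm ss}(\Li_0)\quot G)={\rm mod}(X,G)$ produced by Proposition~\ref{prop:modmax}, pick any $\Li_1$ in the relative interior of $\ac^G(X)$ and consider $\Li_t=\Li_0^{\otimes(1-t)}\otimes\Li_1^{\otimes t}$ for small $t>0$: since $\ac^G(X)$ is convex and $\Li_1$ is interior, $\Li_t$ lies in the (open, since full-dimensional) ample GIT-cone's interior for $t>0$ small, and because $X^{\rm ss}(\Li_1)\subseteq X^{\rm ss}(\Li_t)$ — a nonzero invariant section of a power of $\Li_0$ times one of a power of $\Li_1$ is a nonzero invariant section of a power of $\Li_t$ — the $G$-invariant rational map defining $\theta$ for $\Li_0$ still defines a dominant rational map on $X^{\rm ss}(\Li_t)$ factoring through $\pi$, so $\dim(X^{\rm ss}(\Li_t)\quot G)\geq{\rm mod}(X,G)$. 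Combined with the reverse inequality from Proposition~\ref{prop:modmax}, we get equality for $\Li_t$, hence (by Proposition~\ref{prop:dimquotconst}) for all $\Li$ in the relative interior of $\ac^G(X)$.

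The main obstacle is making precise the claim that one can ``push'' the extremal bundle $\Li_0$ of Proposition~\ref{prop:modmax} into the ample cone without dropping the quotient dimension — in particular verifying the semistable-locus inclusion $X^{\rm ss}(\Li_1)\subseteq X^{\rm ss}(\Li_t)$ and that the relevant invariant sections $\tau_0,\dots,\tau_m$ (or suitable powers thereof, tensored with invariant sections of powers of $\Li_1$) remain well-defined and algebraically independent after the perturbation. An alternative, cleaner route that avoids reproving part of Proposition~\ref{prop:modmax}: observe directly that ${\rm mod}(X,G)$ is the transcendence degree of $\CC(X)^G$, that for $\Li$ ample and in the interior of $\ac^G(X)$ the quotient $X^{\rm ss}(\Li)\quot G$ is a good quotient with function field $\CC(X^{\rm ss}(\Li))^G=\CC(X)^G$ (the semistable locus being open dense in the irreducible $X$), hence its dimension is exactly the transcendence degree ${\rm mod}(X,G)$; Proposition~\ref{prop:dimquotconst} then extends this to the whole relative interior of $\ac^G(X)$. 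I expect the actual write-up to take this second route, invoking Proposition~\ref{prop:dimquotconst} only to pass from an ample interior point to the relative interior of $\ac^G(X)$.
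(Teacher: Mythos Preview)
Your first route is close in spirit to what the paper does, but two points need correction. The claim that $\ac^G(X)$ is full-dimensional in $\Pic^G(X)_\QQ$ is false in general: already for $X=(G/B)^2$ with $G$ semisimple, $\ac^G(X)$ lies in the half-dimensional subspace $\{(\lambda,-w_0\lambda)\}$. What you actually need, and have not established, is that the bundle $\Li_0$ produced by Proposition~\ref{prop:modmax} lies in the \emph{closure} of $\ac^G(X)$. This is exactly where the flag-manifold hypothesis enters, and the paper invokes \cite[Proposition~10]{GITEigen} for it: on a product of flag varieties, every $\Mi$ with $X^{\rm ss}(\Mi)\neq\emptyset$ lies in $\overline{\ac^G(X)}$. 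Once this is known, the paper proceeds slightly differently from your perturbation: by \cite[Lemma~7]{GITEigen} there exists an interior $\Li$ with $X^{\rm ss}(\Li)\subset X^{\rm ss}(\Mi)$ (note the direction of the inclusion), which yields a dominant, hence surjective, morphism $X^{\rm ss}(\Li)\quot G\to X^{\rm ss}(\Mi)\quot G$ and thus $\dim(X^{\rm ss}(\Li)\quot G)\geq\dim(X^{\rm ss}(\Mi)\quot G)$. Combined with Propositions~\ref{prop:modmax} and~\ref{prop:dimquotconst} this finishes the proof.

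Your second route contains a genuine error: the assertion that for ample $\Li$ the good quotient $X^{\rm ss}(\Li)\quot G$ has function field $\CC(X)^G$ is \emph{false} without further hypotheses. Take $G=\SL_2$ acting diagonally on $X=(\PP^1)^4$ with the ample linearization $\Li=\mathcal{O}(1,1,1,3)$. Then $\Li\in\ac^G(X)$ (there is a one-dimensional space of invariant sections), but it lies on the boundary; one checks that $X^{\rm ss}(\Li)=\{(p_1,p_2,p_3,p_4):p_4\notin\{p_1,p_2,p_3\}\}$ contains no stable points and that every orbit closure in $X^{\rm ss}(\Li)$ meets the single closed orbit $\{(p,p,p,q):p\neq q\}$, so the quotient is a point --- whereas ${\rm mod}(X,G)=1$. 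The equality of function fields you invoke holds only when the stable locus is nonempty, and proving that interior linearizations have stable points is not easier than the lemma itself.
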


\begin{proof}
The lemma is a consequence of Propositions~\ref{prop:dimquotconst} and \ref{prop:modmax}.
The only difficulty is that Proposition~\ref{prop:dimquotconst} deals with ample line bundles 
and Proposition~\ref{prop:modmax} concerns any line bundle.

  Let $\Mi\in \Pic^G(X)$ such that $X^{\rm ss}(\Mi)$ is not empty.
By \cite[Proposition~10]{GITEigen}, $\Mi$ belongs to the closure of   $\ac^G(X)$.
By \cite[Lemma~7]{GITEigen}, there exists $\Li$ in the relative interior of  $\ac^G(X)$
such that $X^{\rm ss}(\Li)\subset X^{\rm ss}(\Mi)$. Corresponding to this inclusion we have a
dominant (and so surjective) morphism  $X^{\rm ss}(\Li)\quot G\longto X^{\rm ss}(\Mi)\quot G$.
In particular, we have:
$$
\dim(X^{\rm ss}(\Li)\quot G)\geq \dim(X^{\rm ss}(\Mi)\quot G).
$$
With Proposition~\ref{prop:dimquotconst}, this implies that for any $\Li$ in the relative 
interior of $\ac^G(X)$, the dimension of $X^{\rm ss}(\Li)\quot G$ equals the maximal 
dimension of the varieties $X^{\rm ss}(\Mi)\quot G$ for $\Mi\in\Pic^G(X)$.
With Proposition~\ref{prop:modmax}, this implies the lemma.
\end{proof}

\subsection{Properties of the LR-coefficients}

\paragraphe{\bf Saturation.}
Let $(\lambda,\mu,\nu)\in\Lambda^+_n$ and $m$ be a positive integer. 
Knutson-Tao proved in \cite{KT:saturation}:
\begin{theo}\label{th:saturation}
If 
$c^n_{m\lambda\; m\mu\; m\nu}\neq 0$ then
$c^n_{\lambda \mu\nu}\neq 0$.
\end{theo}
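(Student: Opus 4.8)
The statement is the Knutson--Tao saturation theorem, and the route I would take is the hive/honeycomb model. The first step is to realize the Littlewood--Richardson coefficient as a count of lattice points of a rational polytope that scales linearly in $(\lambda,\mu,\nu)$. Concretely, to a triple $(\lambda,\mu,\nu)$ with $|\lambda|+|\mu|+|\nu|=0$ one attaches the polytope $P(\lambda,\mu,\nu)$ of real \emph{hives} of size $n$ whose three boundary edges encode the partial sums of $\lambda$, $\mu$ and $\nu$, cut out, inside this affine-linear boundary condition, by the homogeneous rhombus inequalities. A theorem of Berenstein--Zelevinsky, reformulated by Knutson--Tao, asserts that $c^n_{\lambda\mu\nu}$ is exactly the number of \emph{integral} hives in $P(\lambda,\mu,\nu)$. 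Since the boundary conditions are linear and the rhombus inequalities are homogeneous, one has the scaling identity $P(m\lambda,m\mu,m\nu)=m\,P(\lambda,\mu,\nu)$ for every positive integer $m$.

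\textbf{Reduction to an integrality statement.} Granting the hive description, the theorem reduces to a single point. If $c^n_{m\lambda\,m\mu\,m\nu}\neq 0$, then $m\,P(\lambda,\mu,\nu)$ contains an integral hive; dividing that hive by $m$ shows $P(\lambda,\mu,\nu)$ is nonempty as a real polytope. So it suffices to prove that \emph{if $P(\lambda,\mu,\nu)$ is nonempty over $\RR$ and $(\lambda,\mu,\nu)$ is integral, then $P(\lambda,\mu,\nu)$ contains an integral point}. It would in fact more than suffice to know that the integral-boundary hive polytopes have integral vertices, and this is essentially what the honeycomb analysis below delivers. This is the whole content of the theorem, and it is where the real work lies.

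\textbf{The main obstacle.} To settle the integrality statement I would pass to \emph{honeycombs}, the planar-graph objects dual to hives: a honeycomb with boundary $(\lambda,\mu,\nu)$ is a balanced configuration of segments in three fixed directions with prescribed boundary rays, and lattice honeycombs correspond to lattice hives. The key fact to establish is that a nonempty honeycomb space with integral boundary contains a honeycomb all of whose vertices are lattice points. Knutson--Tao obtain this by analysing how a general honeycomb degenerates, within its honeycomb space and keeping its boundary fixed, to a ``rigid'' one whose vertex positions are then forced, by unit-coefficient linear relations propagated from the boundary, to be integral once the boundary data are integral; the heart of the argument is a careful study of how the edges and the constant ``gap'' regions of a honeycomb interact along the boundary, together with the exclusion of the bad local configurations. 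I expect this combinatorial rigidity statement to be the only genuinely hard step; everything preceding it is bookkeeping with the polytope.

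\textbf{An alternative in the spirit of Section~\ref{sec:quiver}.} One can instead follow Derksen--Weyman. For a suitable dimension vector $\alpha$ of a quiver of the shape $T_{pqr}$, the coefficient $c^n_{\lambda\mu\nu}$ is the dimension of the space of semi-invariants of a weight $\sigma_{\lambda,\mu,\nu}$ on $\Rep(T_{pqr},\alpha)$; one then proves directly that the semigroup of weights carrying a nonzero semi-invariant is saturated, using Schofield's determinantal description of semi-invariants together with the observation that the defining condition (vanishing of $\mathrm{ext}$ against a complementary dimension vector, which is what yields the Horn-type inequalities) is invariant under rescaling the weight. Either way the non-formal ingredient is an integrality/rigidity phenomenon, and of the two I would write up the honeycomb version, isolating the rigidity lemma as the single hard input.
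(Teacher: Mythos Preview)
Your outline is a correct sketch of the original Knutson--Tao honeycomb argument, and the Derksen--Weyman alternative you mention is also a valid route. There is, however, nothing to compare it against: the paper does not prove this theorem. It simply records it as a known result of Knutson--Tao (with a reference to Belkale's geometric proof) and remarks that it is a corollary of Theorem~\ref{th:Horn}, the Horn description of ${\rm Horn}(n)$. That remark is immediate once one has Theorem~\ref{th:Horn}, since the conditions there are homogeneous linear equalities and inequalities, so $(m\lambda,m\mu,m\nu)\in{\rm Horn}(n)$ implies $(\lambda,\mu,\nu)\in{\rm Horn}(n)$; but Theorem~\ref{th:Horn} itself is only quoted (``combining \cite{Belk:P1} and \cite{KT:saturation}''), and its proof already uses saturation, so this observation is not an independent proof. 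In short: your proposal supplies an actual proof strategy where the paper gives only a citation.
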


A geometric proof is given in \cite{Bel:saturation}.
Note that this statement is a corollary (or a part) of
Theorem~\ref{th:Horn} and was already stated in the introduction.

\bigskip\paragraphe{\bf The Fulton conjecture.}
Let $(\lambda,\mu,\nu)\in\Lambda^+_n$ and $m$ be a positive integer.
 Knutson-Tao proved in \cite{KT:saturation} the following Fulton conjecture:
 \begin{theo}
   \label{th:Fultonconj}
If $c^n_{\lambda\mu\nu}=1$ then $c^n_{m\lambda\; m\mu\;m\nu}=1$ for
any positive integer $m$.
 \end{theo}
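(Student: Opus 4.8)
The statement to prove is Theorem (the Fulton conjecture): if $c^n_{\lambda\mu\nu}=1$ then $c^n_{m\lambda\,m\mu\,m\nu}=1$ for all positive integers $m$.

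The plan is to use the geometric interpretation of Littlewood-Richardson coefficients as dimensions of spaces of invariant sections, combined with the GIT-modularity machinery developed in Section~\ref{sec:algogen}. First I would recast $c^n_{\lambda\mu\nu}$ as $\dim \H^0(Y,\Li)^G$ where $Y=G/P\times G/Q\times G/R$ is the appropriate product of partial flag manifolds with $G=\GL_n(\CC)$, the parabolics $P,Q,R$ being determined by $\type(\lambda),\type(\mu),\type(\nu)$, and $\Li=\Li_{\lambda,\mu,\nu}$ the $G$-linearized line bundle whose fiber weights encode $(\lambda,\mu,\nu)$. Under this dictionary, replacing $(\lambda,\mu,\nu)$ by $(m\lambda,m\mu,m\nu)$ replaces $\Li$ by $\Li^{\otimes m}$, so $c^n_{m\lambda\,m\mu\,m\nu}=\dim\H^0(Y,\Li^{\otimes m})^G$. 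Since $\dim\H^0(Y,\Li^{\otimes m})^G$ is nondecreasing and eventually polynomial in $m$, the hypothesis forces $\Li$ to lie on the boundary of $\ac^G(Y)$ and, more precisely, it should force $X^{\rm ss}(\Li)\quot G$ to be a point: indeed if this quotient had positive dimension the section ring $\bigoplus_m\H^0(Y,\Li^{\otimes m})^G$ would grow, contradicting $c^n_{\lambda\mu\nu}=1$.

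The heart of the argument is then to show that once $X^{\rm ss}(\Li)\quot G$ is zero-dimensional (a single reduced point), the graded piece in every positive degree is exactly one-dimensional. This is where the smoothness of $Y$ and a vanishing/normality argument enter: one wants $\bigoplus_m\H^0(Y,\Li^{\otimes m})^G=\CC[t]$ as a graded ring, equivalently that the invariant section defining the point is not a proper power and that there are no ``extra'' invariants appearing in higher degree. The cleanest route is Knutson--Tao's honeycomb/hive model, but since the excerpt directs us to a cohomology-free viewpoint, the intended proof surely runs through the modularity statement: by Lemma~\ref{lem:modularityflag}, for $\Li$ in the relative interior of $\ac^G(Y)$ the dimension of $X^{\rm ss}(\Li)\quot G$ equals $\mathrm{mod}(Y,G)$; the condition $c^n_{\lambda\mu\nu}=1$ (via the Belkale/Derksen--Weyman analysis) translates to $\mathrm{mod}(Y,G)=0$, i.e.\ $Y$ has a dense $G$-orbit, which makes all the invariant rings in sight equal to a polynomial ring in one variable, whence $c^n_{m\lambda\,m\mu\,m\nu}=1$.

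The main obstacle I anticipate is the passage from ``the quotient is a point'' to ``every graded piece is exactly $1$-dimensional'': a priori a one-dimensional quotient-field could still come with a non-reduced structure or with the Veronese subring being a proper subring of $\CC[t]$, so one must rule out that $\H^0(Y,\Li^{\otimes m})^G=0$ for small $m$ and jumps later. Controlling this requires knowing that $\Li$ itself (not just some power) has a nonzero invariant section and that the section ring is integrally closed in its field of fractions, which one gets from normality of $Y^{\rm ss}(\Li)\quot G$ together with the fact that $Y$, being a product of flag manifolds, has rational singularities and $\Li$ is base-point free on the semistable locus. Assembling these ingredients—dense orbit $\Rightarrow$ trivial invariant ring in each degree—is the only genuinely delicate point; the rest is bookkeeping with the LR/section-dimension dictionary already set up in the paper.
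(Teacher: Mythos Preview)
The paper does not actually prove this theorem: it is stated as a known result, attributed to Knutson--Tao--Woodward with later geometric proofs cited (\cite{belkale:geomHorn,fulton,BKR}). More importantly, in this paper Fulton's conjecture is an \emph{input} to the main algorithm, not an output: look at the proof in Section~\ref{sec:algogen}, case~\ref{caseii}, where the implication ``no dense orbit $\Rightarrow c_{\lambda\mu\nu}^n\neq 1$'' is obtained precisely by invoking Fulton's conjecture.

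Your argument is circular at its central step. You write that if $X^{\rm ss}(\Li)\quot G$ had positive dimension ``the section ring $\bigoplus_m\H^0(Y,\Li^{\otimes m})^G$ would grow, contradicting $c^n_{\lambda\mu\nu}=1$.'' But $c^n_{\lambda\mu\nu}=1$ is a statement about the degree~$1$ piece only; growth of the section ring for large $m$ does not contradict $\dim R_1=1$. The assertion that it \emph{does} is exactly Fulton's conjecture. Equivalently, your claim that ``$c^n_{\lambda\mu\nu}=1$ translates to $\mathrm{mod}(Y,G)=0$'' via the modularity lemma is not something the paper's machinery gives you for free: Lemma~\ref{lem:modularityflag} tells you the quotient dimension for $\Li$ in the relative interior of $\ac^G(Y)$ equals $\mathrm{mod}(Y,G)$, but it says nothing about the degree~$1$ graded piece of the invariant ring. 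A priori one could have $\mathrm{mod}(Y,G)>0$, a positive-dimensional quotient, and still $\dim\H^0(Y,\Li)^G=1$ with $\dim\H^0(Y,\Li^{\otimes m})^G$ eventually large. Ruling this out is the whole content of the theorem.

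The obstacle you flag at the end (passing from ``the quotient is a point'' to ``every graded piece is $1$-dimensional'') is, once you have a nonzero degree~$1$ invariant and normality of the section ring, indeed manageable. But you never legitimately reach ``the quotient is a point'': that deduction already presupposes the theorem. The actual proofs (combinatorial via hives, or geometric via tangent-space/transversality arguments as in the cited references) supply genuinely new input at this step that the GIT bookkeeping alone does not provide.
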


Geometric proofs of this result are given in
\cite{belkale:geomHorn,fulton,BKR}.
 
\bigskip\paragraphe{\bf LR-coefficients on the boundary of ${\rm Horn}(n)$.}
The following theorem has been proved independently in \cite{KTT:factorLR} and \cite{DW:comb}.
Alternative proofs can be found in \cite{Roth:red,ReductionRule}.

\begin{theo}\label{th:prod}
  Let $(\lambda,\mu,\nu)\in(\Lambda_n^+)^3$. Let
  $(I,J,K)\in \Part(r,n)$ such that 
$$
\sigma_I.\sigma_J.\sigma_K=[{\rm pt}].
$$
If $\varphi_{IJK}(\lambda,\mu,\nu)=0$, then
$$
c^n_{\lambda\,\mu\,\nu}=c^r_{\lambda_I\,\mu_J\,\nu_K}\;.\; 
c^{n-r}_{\lambda_{I^c}\,\mu_{J^c}\,\nu_{K^c}}.
$$
\end{theo}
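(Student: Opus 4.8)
The plan is to realize the Littlewood-Richardson coefficient $c^n_{\lambda\mu\nu}$ as the dimension of a space of invariant sections of a line bundle on a product of flag manifolds, and then to use the geometric machinery of Section~\ref{par:nonstandardGIT} together with the condition $\sigma_I.\sigma_J.\sigma_K=[{\rm pt}]$ to split that geometric invariant-theoretic quotient into two smaller ones. Concretely, set $X=\GL_n/P\times\GL_n/Q\times\GL_n/R$ where $P$, $Q$, $R$ are the parabolic subgroups determined by the types of $\lambda$, $\mu$, $\nu$, and let $\Li=\Li_{\lambda,\mu,\nu}$ be the $\GL_n$-linearized line bundle with $\H^0(X,\Li)^{\GL_n}$ of dimension $c^n_{\lambda\mu\nu}$. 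The hypothesis $\varphi_{IJK}(\lambda,\mu,\nu)=0$ says precisely that the linearization is ``on the wall'' cut out by the one-parameter subgroup $\tau$ associated to $(I,J,K)$, i.e.\ the numerical function $\mu^{\Li}(x,\tau)$ vanishes on the relevant component of the semistable locus; this is the standard GIT interpretation of the Belkale–Kumar/Ressayre wall description of the eigencone, and it is exactly the regime in which a ``reduction rule'' is available.

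The key steps, in order, would be: \textbf{(1)} Record the dictionary between $c^n_{\lambda\mu\nu}$ and $\dim \H^0(X,\Li)^{\GL_n}$, and identify the dominant one-parameter subgroup $\tau=\tau_{IJK}$ together with its Levi subgroup $L=L(\tau)\subset\GL_n$; because $\sigma_I.\sigma_J.\sigma_K=[{\rm pt}]$, the associated parabolic $P(\tau)$ has a dense orbit on each Schubert cell, and $L$ is a product $\GL_r\times\GL_{n-r}$. \textbf{(2)} Apply the Luna-type/Kempf–Ness analysis of semistability along the wall: by the theory in \cite{GITEigen2} (cited just before, Proposition~\ref{prop:dimquotconst}) and the properties of the ample GIT-cone $\ac^G(X)$, the semistable points for $\Li$ on the wall concentrate, up to the flow of $\tau$, on the fixed-point locus $X^\tau$, and one gets an isomorphism (or at least a degree-one correspondence) of invariant section spaces $\H^0(X,\Li)^{\GL_n}\cong \H^0(X^\tau,\Li|_{X^\tau})^L$. \textbf{(3)} Observe that $X^\tau$ decomposes as a product of two flag varieties, one for $\GL_r$ and one for $\GL_{n-r}$, in such a way that the restricted line bundle is the external tensor product of the bundle attached to $(\lambda_I,\mu_J,\nu_K)$ on the $\GL_r$-factor and the one attached to $(\lambda_{I^c},\mu_{J^c},\nu_{K^c})$ on the $\GL_{n-r}$-factor; here one uses the explicit formula $\lambda^I=(i_r-r,\dots,i_1-1)$ from Paragraph~\ref{par:lambdaI} to match weights, and the fact that $\lambda_I$ collects the entries of $\lambda$ indexed by $I$. \textbf{(4)} Conclude $\H^0(X^\tau,\Li|_{X^\tau})^L\cong \H^0(\cdot)^{\GL_r}\otimes \H^0(\cdot)^{\GL_{n-r}}$, whose dimension is $c^r_{\lambda_I\mu_J\nu_K}\cdot c^{n-r}_{\lambda_{I^c}\mu_{J^c}\nu_{K^c}}$.

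The main obstacle is step~\textbf{(2)}: proving that passing to the $\tau$-fixed locus induces an \emph{isomorphism} on invariants, not merely an inequality of dimensions. The inequality $\dim\H^0(X,\Li)^{\GL_n}\le \dim\H^0(X^\tau,\Li|_{X^\tau})^L$ follows from semicontinuity/retraction along the flow of $\tau$ and $\mu^{\Li}(\cdot,\tau)\ge 0$ on the semistable locus; the reverse inequality requires that every $L$-invariant section on $X^\tau$ extends to a $\GL_n$-invariant section on $X$. This is where the cohomological hypothesis $\sigma_I.\sigma_J.\sigma_K=[{\rm pt}]$ is essential rather than decorative: it forces the parabolic $P(\tau)$ to act with a dense orbit on the product of Schubert cells transverse to $X^\tau$, so that the induced map $\GL_n\times_{P(\tau)}X^{\tau,{\rm ss}}\to X^{\rm ss}$ is birational, and invariant sections can be transported across. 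I would cite \cite{DW:comb,Roth:red,ReductionRule,KTT:factorLR} for the clean statement and either reproduce the short Kempf–Ness argument or reduce directly to \cite[Proposition~1.1]{GITEigen2} plus the density statement to keep the proof self-contained.

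\begin{proof}
See the discussion above; a complete argument along these lines (the ``factorization'' or ``reduction rule'') is carried out in \cite{KTT:factorLR,DW:comb,Roth:red,ReductionRule}.
\end{proof}
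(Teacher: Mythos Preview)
The paper does not give its own proof of this theorem: it is stated with attribution to \cite{KTT:factorLR,DW:comb}, with alternative proofs cited in \cite{Roth:red,ReductionRule}, and no argument is reproduced. Your proposal does exactly the same in the end (citing the identical four references), so in that sense it matches the paper's treatment.

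The sketch you add before the citation is a fair outline of the GIT ``reduction rule'' approach of \cite{Roth:red,ReductionRule}: realize $c^n_{\lambda\mu\nu}$ as $\dim\H^0(X,\Li)^{\GL_n}$, use $\varphi_{IJK}=0$ to place $\Li$ on the wall associated to the one-parameter subgroup with Levi $\GL_r\times\GL_{n-r}$, and invoke the hypothesis $\sigma_I.\sigma_J.\sigma_K=[{\rm pt}]$ to get the birational collapse onto the fixed locus that makes restriction of invariants an isomorphism rather than an inequality. You correctly identify step~(2) as the crux. Note that \cite{KTT:factorLR} and \cite{DW:comb} reach the same conclusion by combinatorial (puzzle/hive) and quiver-theoretic routes rather than GIT, so your sketch reflects only one of the available proofs; but since the paper merely cites the result, nothing more is required here.
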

\subsection{Proof of the algorithm}

\begin{theo}
The algorithm described in Section~\ref{sec:algo} decides if
$c^n_{\lambda\mu\nu}=0$, $1$ or if $c^n_{\lambda\mu\nu}\geq 2$.
\end{theo}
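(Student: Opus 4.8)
The plan is to prove correctness by induction on $n$, checking that each branch of {\tt IsLR$01$} returns the correct answer, using the structural results on Littlewood--Richardson coefficients collected in Section~\ref{sec:Horn} and the quiver/GIT material of Sections~\ref{sec:quiver} and \ref{sec:algogen}. The base case $n=1$ is trivial: $\Lambda_1^+=\ZZ$, $V_\lambda$ is one-dimensional, and $c^1_{\lambda\mu\nu}=1$ if $\lambda_1+\mu_1+\nu_1=0$ and $0$ otherwise, which is exactly what step~(i) returns. For the inductive step, assume the algorithm is correct for all integers smaller than $n$, and consider a call on $(n,\lambda,\mu,\nu)$ with $n\geq 2$.

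First I would analyse step~(ii). By the inductive hypothesis, the recursive calls {\tt IsLR$01(r,\lambda^I,\lambda^J,\lambda^K-2(n-r)1^r)$} correctly decide whether this triple equals $1$; by the identity $c_{IJK}=c^r_{\lambda^I\lambda^J\lambda^K-2(n-r)1^r}$ from Paragraph~\ref{par:lambdaI}, the condition tested in step~(ii) is exactly $c_{IJK}=1$, i.e. $\sigma_I.\sigma_J.\sigma_K=[{\rm pt}]$ (using that $\sigma_I.\sigma_J.\sigma_K$ is a nonnegative multiple of $[{\rm pt}]$, which vanishes iff $c_{IJK}=0$ and equals $[{\rm pt}]$ iff $c_{IJK}=1$). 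Now fix such a triple $(I,J,K)$ and set $\phi=\varphi_{IJK}(\lambda,\mu,\nu)$. If $\phi>0$, Theorem~\ref{th:Horn} gives $(\lambda,\mu,\nu)\notin{\rm Horn}(n)$, hence $c^n_{\lambda\mu\nu}=0$, so branch~(ii)(b) is correct. If $\phi=0$, Theorem~\ref{th:prod} yields the factorization $c^n_{\lambda\mu\nu}=c^r_{\lambda_I\mu_J\nu_K}\cdot c^{n-r}_{\lambda_{I^c}\mu_{J^c}\nu_{K^c}}$; the two recursive calls correctly classify each factor as $0$, $1$, or $\geq 2$ by the inductive hypothesis, and then the elementary arithmetic of a product of two nonnegative integers — it is $0$ iff a factor is $0$, and $1$ iff both factors are $1$ — justifies branch~(ii)(c). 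If $\phi<0$ nothing is decided, and we pass to the next triple; only if no triple produces a conclusion do we reach step~(iii).

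Next I would handle step~(iii), which is reached only when, for every $r\in[1;n-1]$ and every $(I,J,K)\in\Part(r,n)^3$ with $\sigma_I.\sigma_J.\sigma_K=[{\rm pt}]$, we have $\varphi_{IJK}(\lambda,\mu,\nu)<0$, i.e. $(\lambda,\mu,\nu)$ lies in the interior of ${\rm Horn}(n)$ relative to the face structure cut out by these inequalities, or more precisely on no facet of ${\rm Horn}(n)$ of the Belkale type. I also need to record that if $|\lambda|+|\mu|+|\nu|\neq 0$ then $c^n_{\lambda\mu\nu}=0$; this degenerate situation must be caught (one can check it holds automatically here, since step~(ii) with $r=1$ provides the relevant strictness when the trace vanishes and gives $\phi>0$ otherwise — or one adds it as a preliminary test). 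The heart of the matter is to show that under these hypotheses $c^n_{\lambda\mu\nu}\geq 1$, and that $c^n_{\lambda\mu\nu}=1$ if and only if $\Fl_n({\rm type}(\lambda))\times\Fl_n({\rm type}(\mu))\times\Fl_n({\rm type}(\nu))$ contains a dense $\GL_n$-orbit. The first half, positivity, follows from Theorem~\ref{th:Horn}: being on no Belkale facet and satisfying the trace equation places $(\lambda,\mu,\nu)$ inside ${\rm Horn}(n)$. For the equivalence, I would identify $c^n_{\lambda\mu\nu}$ with $\dim H^0(Y,\Li)^G$ for $Y=G/P_\lambda\times G/Q_\mu\times G/R_\nu$ the product of partial flag manifolds with stabilizer types ${\rm type}(\lambda)$, ${\rm type}(\mu)$, ${\rm type}(\nu)$ and $\Li$ the $G$-linearized line bundle attached to the three weights; then use that $(\lambda,\mu,\nu)$ lies in the relative interior of the GIT-cone $\ac^G(Y)$ (thanks to the strict inequalities, via the Belkale--Kumar / GIT description of the eigencone — this is the content we are allowed to assume from Theorems~\ref{th:Belk},~\ref{th:KTW} and the surrounding GIT machinery) so that Lemma~\ref{lem:modularityflag} gives $\dim(Y^{\rm ss}(\Li)\quot G)={\rm mod}(Y,G)$. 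Finally, invoking the Fulton conjecture (Theorem~\ref{th:Fultonconj}) together with the saturation theorem (Theorem~\ref{th:saturation}), $c^n_{\lambda\mu\nu}=1$ is equivalent to $\dim(Y^{\rm ss}(\Li)\quot G)=0$, i.e. to ${\rm mod}(Y,G)=0$, i.e. to $Y$ containing a dense $\GL_n$-orbit, which by Lemma~\ref{lem:quiverflag} and Proposition~\ref{prop:DWalgo} is exactly what the Derksen--Weyman subroutine decides.

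The main obstacle I anticipate is the last equivalence in step~(iii): showing rigorously that $c^n_{\lambda\mu\nu}=1$ iff ${\rm mod}(Y,G)=0$ under the standing hypothesis that $(\lambda,\mu,\nu)$ lies on no Belkale facet. The direction ``$Y$ has a dense orbit $\Rightarrow$ $c\leq 1$'' is the easy half (it is inequality~\eqref{eq:homdonne1} from the introduction: a $G$-invariant section is determined by its value at a point of the dense orbit). The converse requires knowing that when ${\rm mod}(Y,G)>0$, the homogeneous multiplicities $c^n_{m\lambda\,m\mu\,m\nu}$ grow, which combined with Fulton's conjecture rules out $c^n_{\lambda\mu\nu}=1$; making this precise is where Lemma~\ref{lem:modularityflag} and the interior-of-GIT-cone hypothesis do their work, and care is needed to check that we really are in the relative interior of $\ac^G(Y)$ rather than merely on its boundary — this is the point that forces step~(iii) to be invoked only after step~(ii) has been exhausted. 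Once this is in place, the termination of the algorithm is clear, since every recursive call strictly decreases the first argument.
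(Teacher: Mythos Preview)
Your proposal is correct and follows essentially the same route as the paper's own proof: induction on $n$, justification of step~(ii) via the identity $c_{IJK}=c^r_{\lambda^I\lambda^J\lambda^K-2(n-r)1^r}$ together with Theorems~\ref{th:Horn} and~\ref{th:prod}, and justification of step~(iii) by placing $\bar\Li$ in the relative interior of $\ac^G(X)$, applying Lemma~\ref{lem:modularityflag}, and then invoking saturation (Theorem~\ref{th:saturation}) and Fulton's conjecture (Theorem~\ref{th:Fultonconj}) to conclude. Your explicit flagging of the trace condition $|\lambda|+|\mu|+|\nu|=0$ is a point of rigor that the paper's proof passes over silently when it invokes Theorem~\ref{th:Belk} in case~(iii).
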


\begin{proof}
The case $n=1$ is obvious. Moreover, the procedure is used recursively
three times with strictly smaller $n$. So, the procedure finishes.

If algorithm stop in case~\ref{caseib}, we have 
$$
\varphi_{IJK}(\lambda,\mu,\nu)>0
$$
for some $(I,J,K)$ appearing in  Theorem~\ref{th:Belk}.
This implies that  $c^n_{\lambda\mu\nu} =0$. 

If the algorithm stop in case~\ref{caseic}, $\phi$ is equal to $0$.
Then  Theorem~\ref{th:prod} shows that $c=c_1.c_2$; and
the algorithm works in this case.

We now consider case~\ref{caseii}. In this case, 
for any $r=1,\cdots,n-1$ and for any $(I,J,K)\in \Part(r,n)$  such
that $\sigma_I.\sigma_J.\sigma_K=[{\rm pt}]$, we have:
$$
\varphi_{IJK}(\lambda,\mu,\nu)<0.
$$
So, Theorem \ref{th:Belk} shows that $(\lambda,\mu,\nu)\in {\rm Horn}_\RR(n)$.

Let $T$ and $B$ be the usual maximal torus and Borel subgroup of $\GL_n$.
Then, $\lambda$ corresponds to a character of $T$ or $B$.
The group $B$ fixes a unique point in $\Fl_n({\rm type}(\lambda))$ whose the
stabilizer in $G$ will be denoted by $P$.
Moreover, $\lambda$ extends to unique character of $P$.
Similarly, we can think about $\mu$ and $\nu$ as characters of
parabolic subgroups $Q$ and $R$.
Consider the $G=\GL_n^3$-variety $X=\Fl_n({\rm type}(\lambda))\times \Fl_n({\rm type}(\mu))
\times \Fl_n({\rm type}(\nu))=G/P\times G/Q\times G/R$.
Let $\Li$ be the $\GL_n^3$-linearized line bundle on $X$ associated to $(\lambda,\mu,\nu)$ 
(see Paragraph~\ref{par:Lnu} below for details).
It is well known that $\Li$ is ample and that 
$H^{\rm 0}(X,\Li^{\otimes m})=V^*_{m\lambda}\otimes V^*_{m\mu}\otimes V^*_{m\nu}$,
for any positive integer $m$.

Let $\bar \Li$ be the $\GL_n$-linearized line bundle on $X$ obtained by restriction the action of
$\GL_n^3$ to the diagonal.
Since each $\varphi_{IJK}(\lambda,\mu,\nu)<0$, Theorem~\ref{th:Belk} implies that
$\bar \Li$ belongs to the relative interior of $\ac^{G}(X)$. 
Now, Lemma~\ref{lem:modularityflag} implies that the dimension of 
$X^{\rm ss}(\bar \Li)\quot G$ is ${\rm mod}(X,G)$.

Assume now that Derksen-Weyman's algorithm decides that  $X$ does not
contain an open $G$-orbit; that is ${\rm mod}(X,G)>0$.
Since the dimension of $X^{\rm ss}(\bar \Li)\quot G$ is positive,
there exists a positive integer such that
$c_{m\lambda\,m\mu\,m\nu}^n\geq 2$.
Now, Fulton's conjecture implies that $c_{\lambda\mu\nu}^n\neq 1$.
But, Theorem \ref{th:saturation} implies that
$c_{\lambda\mu\nu}^n\neq 0$. Finally,  $c_{\lambda\mu\nu}^n\geq 2$.

Assume finally that ${\rm mod}(X,G)=0$.
Since $X^{\rm ss}(\bar \Li)\quot G$ is a point,
$c_{\lambda\mu\nu}^n\leq 1$.
But, Theorem \ref{th:saturation} implies that
$c_{\lambda\mu\nu}^n\neq 0$. Finally,  $c_{\lambda\mu\nu}^n=1$.
\end{proof}

\section{A parametrization of Schubert varieties}
\label{sec:schub}

In this section, we recall some properties about the inversion sets introduced by Kostant 
in \cite{Kostant:harmform1}.

\subsection{The general case}
\label{sec:paramschubert}

\paragraphe
Let $G$ be a complex reductive group.
Let $T\subset B$ be a maximal torus and a Borel subgroup of $G$.

Let $\Phi$ (resp. $\Phi^+$) denote the set of roots (resp. positive roots) of $G$.
Set $\Phi^-=-\Phi^+$.
Let $\Delta$ denote the set of simple roots.
Let us consider the set $X(T)^+$ of dominant characters of $T$.
Let $W$ denote its Weyl group.

\bigskip\paragraphe\label{par:Lambda}
Let $P$ be a standard (ie which contains $B$) parabolic subgroup of $G$
and $L$ denote its Levi subgroup containing $T$.
Let $W_L$ denote the Weyl group of $L$ and $\Phi_L$ denote the set of roots of $L$.
We consider the homogeneous space $G/P$. Its base point is denoted by $P$.

 For $w\in W/W_L$, we consider the associated Schubert variety 
$\Omega(w)$ which is the closure of $BwP/P$. 

If $G/P$ is a Grassmannian, the Schubert varieties are classically parametrized by partitions
(see Paragraphs~\ref{par:SchubGr} and \ref{par:lambdaI}).
We are going to generalize this parametrization.
The set of weights of $T$ acting on the tangent space $T_PG/P$ is $-(\Phi^+\backslash \Phi_L)$.
Set 
$$\Lambda(G/P)=-(\Phi^+\backslash \Phi_L).$$ 

Let $W^P$ denote the set of minimal length representatives of elements in $W/W_L$.
Let $w\in W^P$.
Consider $w^{-1}\Omega(w)$: it is a closed $T$-stable subvariety of $G/P$ containing $P$ and smooth at $P$.
The tangent space $T_{P}w^{-1}\Omega(w)$ is called {\it the centered tangent space of $\Omega(w)$}.
We set:
$$
\Lambda_w=\{\alpha\in\Lambda(G/P)\,:\, \alpha{\rm\ is\ not\ a\ weight\ of\ }T{\rm\ in\ }T_Pw^{-1}\Omega(w)\}.
$$
Let $\Part(\Lambda(G/P))$ denote the set of parts of $\Lambda(G/P)$. 
We have the following easy lemma (see~\cite{Bou}).

\begin{lemma}
\label{lem:partgen}
We have $\Lambda_w=\{\alpha\in\Lambda(G/P)\,:\,-w\alpha\in\Phi^+\}$, and
the map $W^P\longto \Part(\Lambda(G/P))$, $w\mapsto\Lambda_w$ is injective.
Moreover, the codimension of $\Omega(w)$ is the cardinality of $\Lambda_w$.   
\end{lemma}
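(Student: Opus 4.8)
The plan is to prove the three assertions of Lemma~\ref{lem:partgen} in turn: the description $\Lambda_w=\{\alpha\in\Lambda(G/P)\,:\,-w\alpha\in\Phi^+\}$, the injectivity of $w\mapsto\Lambda_w$, and the codimension formula. The key organizing principle is to understand the $T$-weights on the centered tangent space $T_P\,w^{-1}\Omega(w)$ by transporting the computation back to a neighborhood of the $T$-fixed point $wP$ in $\Omega(w)$.

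First I would set up the local picture. For $w\in W^P$, the Schubert cell $BwP/P$ is an affine space which is open in $\Omega(w)$ and contains the $T$-fixed point $wP$; as a $T$-variety it is isomorphic to the $T$-stable subspace of $T_{wP}(G/P)$ whose weights are $w(\Phi^-\setminus\Phi_L)\cap\Phi^-$, equivalently the weights $\beta\in w\,\Lambda(G/P)$ with $\beta\in\Phi^-$. Translating by $w^{-1}$ carries $wP$ to the base point $P$ and multiplies weights by $w^{-1}$, so the weights of $T$ on $T_P\,w^{-1}\Omega(w)$ are exactly those $\alpha\in\Lambda(G/P)$ with $w\alpha\in\Phi^-$, i.e.\ $-w\alpha\in\Phi^+$. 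Since $\Lambda_w$ was defined as the complement in $\Lambda(G/P)$ of this weight set, we get $\Lambda_w=\{\alpha\in\Lambda(G/P)\,:\,-w\alpha\notin\Phi^+\}$. The point needing care is that for $\alpha\in\Lambda(G/P)$ we have $-w\alpha\in\Phi$, and it cannot lie in $\Phi_L$ because $w\in W^P$ means $w$ maps $\Phi^+_L$ (hence $\Phi_L$) into... actually the clean statement is $w^{-1}(\Phi^+_L)\subset\Phi^+$, so $w(\Phi\setminus\Phi_L)$ misses... I would instead argue directly: $\alpha\in\Lambda(G/P)$ means $-\alpha\in\Phi^+\setminus\Phi_L$; one checks $w(\Phi^+\setminus\Phi_L)\cap\Phi_L=\emptyset$ using $w\in W^P$, so $-w\alpha\notin\Phi_L$ and therefore $-w\alpha\in\Phi^+\iff -w\alpha\notin\Phi^-$. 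Hence $\Lambda_w=\{\alpha\in\Lambda(G/P):-w\alpha\in\Phi^+\}$, as claimed.

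With this description in hand, injectivity and the codimension formula follow quickly. For the codimension: $\dim\Omega(w)=\ell(w)$ equals the number of weights of $T$ on $BwP/P$, which by the computation above equals $|\Lambda(G/P)|-|\Lambda_w|$; since $\dim G/P=|\Lambda(G/P)|$, we get $\codim\Omega(w)=|\Lambda_w|$. For injectivity: knowing $\Lambda_w$ is equivalent to knowing the "inversion set" $I(w^{-1})\cap(\Phi^+\setminus\Phi_L)$ where $I(w^{-1})=\{\beta\in\Phi^+:w^{-1}\beta\in\Phi^-\}$, and by the theory of minimal-length coset representatives (Kostant, or \cite{Bou}), the map $w\mapsto I(w)\cap(\Phi^+\setminus\Phi_L)$ from $W^P$ is injective because it determines $w$ up to $W_L$ and $w$ is the minimal representative. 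Concretely, $\Lambda_{w}$ determines $\ell(w)$ by the codimension formula, and one recovers $w$ by a standard induction: if $\Lambda_w$ is nonempty pick $\alpha\in\Lambda_w$ with $-\alpha$ a simple root $\beta$ not in $\Delta\cap\Phi_L$, use that $s_\beta w\in W^P$ with $\Lambda_{s_\beta w}=s_\beta(\Lambda_w\setminus\{\alpha\})$ and $\ell(s_\beta w)=\ell(w)-1$, and conclude by induction on $\ell(w)$.

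The main obstacle I anticipate is purely bookkeeping about which roots lie where under the action of $w\in W^P$ — in particular the claim $w(\Phi^+\setminus\Phi_L)\cap\Phi_L=\emptyset$ and the identification of the weights of $T$ on the Schubert cell $BwP/P$ with $w(\Phi^-\setminus\Phi_L)\cap\Phi^-$. These are standard facts about parabolic Bruhat cells, and the cleanest route is to cite the relevant statements in \cite{Bou} or \cite{Kostant:harmform1} rather than re-derive them; since the statement of the lemma itself already says "We have the following easy lemma (see~\cite{Bou})", I would keep the proof short, doing the weight bookkeeping once and then reading off all three conclusions.
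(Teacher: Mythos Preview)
The paper does not actually prove this lemma; it merely states it with a reference to \cite{Bou}. So there is no detailed argument to compare against. Your overall strategy---compute the $T$-weights on the centered tangent space via the Schubert cell and read off all three claims---is the right one and would work cleanly. But there is a genuine sign error that breaks the first part of your argument.

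You assert that the Schubert cell $BwP/P$, viewed at the fixed point $wP$, has $T$-weights $w(\Phi^-\setminus\Phi_L)\cap\Phi^-$. This is wrong: the weights lie in $\Phi^+$, not $\Phi^-$. Indeed $BwP/P=UwP/P$ with $U$ the unipotent radical of $B$, and the orbit map $u\mapsto uwP$ is $T$-equivariant for the adjoint action on $U$; so the tangent directions at $wP$ come from positive root spaces, and one finds the weight set is $\Phi^+\cap w(\Phi^-\setminus\Phi_L)=\Phi^+\cap w\Phi^-$ (using $w\in W^P$ to see $w\Phi_L^-\subset\Phi^-$). You can check this already in $\SL_2$: for $w=s$ the big cell $BsB/B$ is one-dimensional with weight $s(-\alpha)=\alpha\in\Phi^+$, whereas your formula $s\{-\alpha\}\cap\Phi^-=\{\alpha\}\cap\{-\alpha\}=\emptyset$ gives nothing.

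Because of this, after translating by $w^{-1}$ the weights on $T_P\,w^{-1}\Omega(w)$ are $\{\alpha\in\Lambda(G/P):w\alpha\in\Phi^+\}$, not $\{\alpha:w\alpha\in\Phi^-\}$ as you write. Taking the complement then gives directly $\Lambda_w=\{\alpha\in\Lambda(G/P):w\alpha\in\Phi^-\}=\{\alpha:-w\alpha\in\Phi^+\}$, with no need for the paragraph about $\Phi_L$. In your write-up that paragraph is trying to turn $\{\alpha:-w\alpha\notin\Phi^+\}$ into $\{\alpha:-w\alpha\in\Phi^+\}$, which is the opposite set; the discussion there does not accomplish that and cannot, since the two sets are genuinely complementary in $\Lambda(G/P)$.

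Once the sign is fixed, your arguments for the codimension formula and for injectivity go through essentially as written: $|\Lambda_w|=|\Lambda(G/P)|-\ell(w)=\codim\Omega(w)$, and $-\Lambda_w$ recovers $(\Phi^+\setminus\Phi_L)\setminus I(w)$, hence $I(w)$ itself (since $I(w)\cap\Phi_L=\emptyset$ for $w\in W^P$), and the inversion set determines $w$.
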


\paragraphe
We write $\alpha\lleq \beta$ if $\beta-\alpha$ is a non-negative combination of 
positive roots.

If $\lambda$ is a one parameter subgroup of $G$ then  the set of $g\in G$ such 
that $\lim_{t\to 0}\lambda(t)g\lambda(t^{-1})$ exists in $G$ is
a parabolic subgroup $P(\lambda)$ of $G$.
Moreover, any parabolic subgroup of $G$  can be obtained
in such a way.
Let us fix a one parameter subgroup $\lambda$ of $T$ such that $P=P(\lambda)$.
Let $\langle \cdot\,,\,\cdot\rangle$ denote the natural paring between one parameter subgroups and 
characters of $T$.

\begin{lemma}
\label{lem:orderideal}
  Let $\alpha\in\Lambda_w$ and $\beta\in\Lambda(G/P)$. We assume that
$\langle\lambda,\alpha\rangle=\langle\lambda,\beta\rangle$ and
$\beta\lleq\alpha$.

Then, $\beta\in\Lambda_w$.
\end{lemma}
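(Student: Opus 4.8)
The plan is to reduce everything to the combinatorial description of $\Lambda_w$ given in Lemma~\ref{lem:partgen}, namely $\Lambda_w=\{\alpha\in\Lambda(G/P)\,:\,-w\alpha\in\Phi^+\}$. So I must show: if $-w\alpha\in\Phi^+$, $\langle\lambda,\alpha\rangle=\langle\lambda,\beta\rangle$, and $\alpha-\beta$ is a non-negative combination of positive roots, then $-w\beta\in\Phi^+$. Since $\beta\in\Lambda(G/P)=-(\Phi^+\setminus\Phi_L)$, we know $-\beta\in\Phi^+$, so $-w\beta$ is a root; the content is that it is a \emph{positive} root.

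First I would examine the weights of $T$ on $\CC^2$-type subquotients, or more directly work with the parabolic structure. The parabolic $P=P(\lambda)$ is determined by $\lambda$, and $\Phi_L=\{\gamma\in\Phi\,:\,\langle\lambda,\gamma\rangle=0\}$; the roots $\gamma$ with $\langle\lambda,\gamma\rangle>0$ are exactly those in $\Phi^+\setminus\Phi_L$ (after choosing $\lambda$ compatibly with $B$), hence $\Lambda(G/P)$ consists of roots $\alpha$ with $\langle\lambda,\alpha\rangle<0$. So the hypothesis $\langle\lambda,\alpha\rangle=\langle\lambda,\beta\rangle$ says $\alpha$ and $\beta$ lie on the same ``level set'' for $\lambda$; in particular $\alpha-\beta$ lies in the span of $\Phi_L$, and being a non-negative combination of positive roots, $\alpha-\beta$ is a non-negative combination of \emph{positive roots of $L$} (any positive root appearing with positive coefficient must be orthogonal to $\lambda$, since the total is). Now $w\in W^P$ means $w$ is the minimal length representative, equivalently $w^{-1}\Phi_L^+\subset\Phi^+$, i.e. $w$ sends positive roots of $L$ to positive roots.

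With this in hand the argument is short: write $\alpha=\beta+\sum_i c_i\gamma_i$ with $c_i\geq 0$ and $\gamma_i\in\Phi_L^+$. Then $-w\alpha=-w\beta-\sum_i c_i(w\gamma_i)$, and each $w\gamma_i\in\Phi^+$. Since $-w\alpha\in\Phi^+$ by hypothesis, we get $-w\beta=-w\alpha+\sum_i c_i(w\gamma_i)$ is a sum of a positive root and non-negative multiples of positive roots, hence lies in the positive root lattice cone; but $-w\beta$ is itself a single root (as $-\beta\in\Phi^+$ and $W$ permutes $\Phi$), and a root that is a non-negative non-zero combination of positive roots must be positive. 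Therefore $-w\beta\in\Phi^+$, so $\beta\in\Lambda_w$ by Lemma~\ref{lem:partgen}.

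The main obstacle I anticipate is the bookkeeping around the compatibility of $\lambda$ with $B$ and with $W^P$: I need to be careful that ``$w\in W^P$'' really gives $w\Phi_L^+\subset\Phi^+$ with the sign conventions in force (note $\Omega(w)$ is the closure of $BwP/P$ and $\Lambda_w$ is defined via $w^{-1}\Omega(w)$, so there is a place where one could slip a sign or an inverse). Once the conventions are pinned down via Lemma~\ref{lem:partgen}, the decomposition of $\alpha-\beta$ into roots of $L$ using the level-set hypothesis is the only genuinely substantive point, and it is elementary. I would also double-check the degenerate case $\alpha=\beta$, which is trivial, and the case where some $\gamma_i$ could a priori fail to be in $\Phi_L$ — ruled out precisely because $\langle\lambda,\alpha-\beta\rangle=0$ forces every positive root occurring in $\alpha-\beta$ to satisfy $\langle\lambda,\gamma_i\rangle=0$, as $\langle\lambda,\cdot\rangle$ is non-negative on $\Phi^+$.
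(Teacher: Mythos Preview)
Your argument is correct and is essentially the same as the paper's: both use $\langle\lambda,\alpha-\beta\rangle=0$ together with $\beta\lleq\alpha$ to see that $\alpha-\beta$ is a non-negative combination of positive roots of $L$, then use $w\in W^P\Rightarrow w\Phi_L^+\subset\Phi^+$ (equivalently $w\Phi_L^-\subset\Phi^-$) to conclude $w\beta\lleq w\alpha\in\Phi^-$. The only cosmetic difference is that the paper works with $w\beta\in\Phi^-$ directly while you phrase everything via $-w\beta\in\Phi^+$.
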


\begin{proof}
  We have to prove that $w\beta\in\Phi^-$. 
But $w\beta=w\alpha+w(\beta-\alpha)$. 
Since $\langle\lambda,\beta-\alpha\rangle=0$, $\beta-\alpha$ belongs to the root lattice of $L$.
But, $\beta\lleq\alpha$; so, $\beta-\alpha$ is a non-negative combination of negative roots of $L$.
Since $w\in W^P$,  $w\Phi_L^-\subset\Phi^-$.
Finally, $  w(\beta-\alpha)$ is a  non-negative combination of negative roots.
If follows that $w\beta\lleq w\alpha$ and $w\beta\in\Phi^-$.
\end{proof}

\bigskip

Lemma~\ref{lem:orderideal} implies that $\Lambda_w$ is an order ideal on each strata given 
by $\lambda$. More precisely, Kostant characterized \cite[Proposition 5.10]{Kostant:harmform1} the parts of $\Lambda(G/P)$
equal to $\Lambda_w$ for some $w\in W^P$.

\subsection{The case $\SL_n$}
\paragraphe\label{par:defrootSL}
Let $V$ be a $n$-dimensional vector space and set $G=\SL(V)$.
Let $\base=(e_1,\cdots,e_{n})$ be a basis of $V$. Let $T$ be the maximal torus
of $G$ consisting of diagonal matrices in $\base$ and $B$ the Borel subgroup
of $G$ consisting of upper triangular matrices.
Let $\varepsilon_i$ denote the character of $T$ which maps 
$\diag(t_1,\cdots,t_{n})$ to $t_i$; we have 
$X(T)=\oplus_i\ZZ\varepsilon_i/\ZZ\sum_i\varepsilon_i$.
Here, we have:
$$
\begin{array}{l}
  \Phi^+=\{\varepsilon_i-\varepsilon_j\,:\,i<j\},\\
\Delta=\{\alpha_r=\varepsilon_r-\varepsilon_{r+1}\,:\,r=1,\cdots,n-1\}.
\end{array}
$$
The Weyl group $W$ of $G$ is the symmetric group $S_{n}$ acting  on $n$ letters.
We will denote by $F(r)$ the span of $e_1,\cdots,e_r$.\\

\paragraphe
Let $\alpha_r$ be a simple root, $P_r$ be the corresponding maximal standard parabolic subgroup of $G$ 
and $L_r$ be its Levi subgroup containing $T$.
The homogeneous space $G/P_r$ with base point $P_r$ is the Grassmannian $\Gr(r,n)$ of $r$-dimensional 
subspaces of $V$ with base point $F(r)$.
The tangent space $T_{F(r)}\Gr(r,n)$ identifies with ${\rm Hom}(F(r),V/F(r))$.
The natural action of $L_r$ which is isomorphic to ${\rm S}(\GL(F(r))\times\GL(V/F(r)))$ makes this
identification equivariant.

Consider $\Lambda(\Gr(r,n))=\Phi^-\backslash\Phi_{L_r}$ as in Paragraph~\ref{par:Lambda}:
$$
\Lambda(\Gr(r,n))=\{\varepsilon_i-\varepsilon_j\,:\, 1\leq j\leq r<i\leq n\}.
$$
We now represent $\Lambda(\Gr(r,n))$ by a rectangle with $r\times (n-r)$ boxes: the box at row $i$ and
the column $j$ represents the root $\varepsilon_{r+i}-\varepsilon_j$ (see Figure~\ref{fig:Gr1}).

\begin{figure}
\centering\psset{unit=0.5cm}
\begin{pspicture}(0,0)(6,5)
\multido{\n=0+1}{5}{
\psline(0,\n)(6,\n)
}
\uput[r](6,3.5){$\varepsilon_{r+1}$}
\uput[r](6,2){$\vdots$}
\uput[r](6,0.5){$\varepsilon_n$}
\multido{\n=0+1}{7}{
\psline(\n,0)(\n,4)
}  
\uput[u](0.5,4){$\varepsilon_{1}$}
\uput[u](3,4){$\dots$}
\uput[u](5.5,4){$\varepsilon_{r}$}
\end{pspicture}
\caption{$\Lambda(\Gr(r,n))$}\label{fig:Gr1}
\end{figure}

Note that Lemma~\ref{lem:orderideal} asserts in this case that 
the $\Lambda_w$'s are Young diagrams (oriented as  on Figure~\ref{fig:Gr2}).

\bigskip\paragraphe\label{sec:lambdaI}
If $I\in\Part(r,n)$, we set $F(I)={\rm Span}(e_i\,:\,i\in I)$.
Let $I=\{i_1<\cdots<i_r\}$ and $\Omega(I)$ the corresponding Schubert variety, that is the 
closure of $B.F(I)$. Set $\{i_{r+1}<\cdots<i_{n}\}=I^c$.
Set $w_I=(i_1,\cdots,i_{n})\in S_{n}=W$; 
then, $w_I\in W^{P_r}$
and represents $\Omega(I)$. Set $\Lambda_I=\Lambda_{w_I}$; we have:
$$
\Lambda_I=\{\varepsilon_i-\varepsilon_j\,:\, w_I(j)<w_I(i) {\rm \ and\ }j\leq r<i\}.
$$

To obtain $\Lambda_I$ on Figure~\ref{fig:Gr2}, one can proceeds as follows.
Index the columns (resp. rows) of Figure~\ref{fig:Gr1} by $I$ (resp. $I^c$).
Now, a given box belongs to $\Lambda_I$ if and only if the index of its column is less that those of 
its row. 
For example, if $I=\{1,\,4,\,5,\,7,\,8,\,10\}\in\Part(6,10)$, 
$\Lambda_I$ is the set of black boxes on Figure~\ref{fig:Gr2}.

\begin{figure} \centering
  \psset{unit=5mm}
\begin{pspicture}(0,0)(6,5)
\pspolygon[fillstyle=solid,fillcolor=gray](0,0)(0,4)(1,4)(1,2)(3,2)(3,1)(5,1)(5,0)
\multido{\n=0+1}{5}{
\psline(0,\n)(6,\n)
}
\multido{\n=0+1}{7}{
\psline(\n,0)(\n,4)
}  

\uput[r](6,3.5){$2$}
\uput[r](6,2.5){$3$}
\uput[r](6,1.5){$6$}
\uput[r](6,0.5){$9$}

\uput[u](0.5,4){$1$}
\uput[u](1.5,4){$4$}
\uput[u](2.5,4){$5$}
\uput[u](3.5,4){$7$}
\uput[u](4.5,4){$8$}
\uput[u](5.5,4){$10$}
\end{pspicture}
  \caption{An example of $\Lambda_I$}
  \label{fig:Gr2}
\end{figure}

Note that $\Lambda_I$ is the {\bf complement of the transpose of} the
Young diagram of $\lambda^I$ as defined 
in Paragraph~\ref{par:lambdaI}.

\bigskip\paragraphe\label{par:Lambdap}
We now consider the case of a two step flag manifold
$
\Fl_n(r_1,r_2).
$
Here, $\Lambda(\Fl_n(r_1,r_2))$ is the union of three rectangles of size 
$r_1\times (r_2-r_1)$, $(r_2-r_1)\times (n-r_2)$ and $r_1\times (n-r_2)$
(see Figure~\ref{fig:Fl1}). These three rectangles are denoted by
$R_0$, $R_1$ and $R_2$ respectively.

\begin{figure}
  \centering

\psset{unit=0.5cm}
\begin{pspicture}(-3,-1.3)(7,8)
\multido{\n=0+1}{4}{
\psline(0,\n)(6,\n)
}\multido{\n=4+1}{4}{
\psline(0,\n)(2,\n)
}
\uput[r](6,6.5){$\varepsilon_{r_1+1}$}
\uput[r](6,3.5){$\vdots$}
\uput[r](6,0.5){$\varepsilon_n$}

\multido{\n=0+1}{3}{
\psline(\n,0)(\n,7)
} \multido{\n=3+1}{4}{
\psline(\n,0)(\n,3)
}
  
\uput[u](0.5,7){$\varepsilon_{1}$}
\uput[u](1.5,7){$\varepsilon_{r_1}$}
\uput[u](3,7){$\dots$}
\uput[u](5.5,7){$\varepsilon_{r_2}$}

\psline{<->}(-0.3,3)(-0.3,7)\psline{<->}(-0.3,3)(-0.3,0)
\psline{<->}(0,-0.3)(2,-0.3)\psline{<->}(2,-0.3)(6,-0.3)
\uput[l](-0.3,5){$r_2-r_1$}\uput[l](-0.3,1.5){$n-r_2$}
\uput[d](1,-0.3){$r_1$}\uput[d](4,-0.3){$r_2-r_1$}

\psline[linewidth=2pt](0,3)(2,3)(2,0)
\end{pspicture}
  
  \caption{$\Lambda(\Fl_n(r_1,r_2))$}
  \label{fig:Fl1}
\end{figure}

The Schubert varieties are naturally parametrized by the set $\Schub(\Fl_n(r_1,r_2))$ of the
pairs $(I^1,\,I^2)\in\Part(r_1,n)\times\Part(r_2,n)$ such that $I^1\subset I^2$.
Let $(I^1,I^2)\in\Schub(\Fl_n(r_1,r_2))$.
To obtain $\Lambda_p$ on Figure~\ref{fig:Fl1}, one can proceed as follows.
Index the $r_1$ first columns (resp. $r_2-r_1$ first rows) of Figure~\ref{fig:Fl1} by $I^1$ 
(resp. $I^2-I^1$).
Index the following $r_2-r_1$ columns (resp. $n-r_2$ rows) of Figure~\ref{fig:Fl1} by $I^2-I^1$ 
(resp. $[1,n]-I^2$).
Now, a given box belongs to $\Lambda_{(I^1,I^2)}$ if and only if the index of its column is less that those of 
its row. 
For example, if $n=9$, $I^1=\{3,\,7\}$ and $I^2=I^1\cup\{1,\,5,\,6,\,8\}$, 
one obtains $\Lambda_{(I^1,I^2)}$ on Figure~\ref{fig:Fl2}.

\begin{figure}
  \centering
\psset{unit=0.5cm}
\begin{pspicture}(-1,-1)(7.5,7) 
\uput[d](1,0){$\Lambda_{2}$}
\uput[r](2,5){$\Lambda_{0}$}
\uput[d](4,0){$\Lambda_{1}$}

\pspolygon[fillstyle=solid,fillcolor=gray](0,3)(0,6)(1,6)(1,4)(2,4)(2,3)
\pspolygon[fillstyle=solid,fillcolor=gray](0,0)(0,2)(1,2)(1,1)(2,1)(2,0)
\pspolygon[fillstyle=solid,fillcolor=gray](2,0)(2,3)(3,3)(3,1)(6,1)(6,0)

\multido{\n=0+1}{4}{
\psline(0,\n)(6,\n)
}\multido{\n=4+1}{4}{
\psline(0,\n)(2,\n)
}
\multido{\n=0+1}{3}{
\psline(\n,0)(\n,7)
} \multido{\n=3+1}{4}{
\psline(\n,0)(\n,3)
}

\uput[u](0.5,7){$3$}\uput[u](1.5,7){$7$}
\uput[u](2.5,3){$1$}\uput[u](3.5,3){$5$}\uput[u](4.5,3){$6$}\uput[u](5.5,3){$8$}
\uput[l](0,6.5){$1$}\uput[l](0,5.5){$5$}\uput[l](0,4.5){$6$}\uput[l](0,3.5){$8$}
\uput[l](0,2.5){$2$}\uput[l](0,1.5){$4$}\uput[l](0,0.5){$9$}

\psline[linewidth=2pt](0,3)(2,3)(2,0)
\end{pspicture}
  \caption{An example of $\Lambda_{(I^1,I^2)}$ for ${(I^1,I^2)}\in\Schub(\Fl_n(r_1,r_2))$}
  \label{fig:Fl2}
\end{figure}

\begin{remark}
Lemma~\ref{lem:orderideal} means that $\Lambda_{(I^1,I^2)}$ is the union of three Young diagrams 
as on Figure~\ref{fig:Fl2}.
\end{remark}

\bigskip\paragraphe
\label{par:SFl}
We now consider the following 
characteristic function:
$$
\begin{array}{llll}
\chi_{(I^1,I^2)}\,:&[1;n]&\longto&\{0,1,2\}\\
         &  i   &\longmapsto&\left\{
           \begin{array}{rll}
             1&{\rm\ if\ }&i\in I^1,\\
             2&{\rm\ if\ }&i\in I^2-I^1,\\
             0&{\rm\ if\ }&i\not\in I^2.\\
           \end{array}
\right.  
\end{array}
$$
We think about  $\chi_{(I^1,I^2)}$ as a word of length $n$ with letters in 
$\{0,1,2\}$.
If one cancels the letters $2$ in this word, one obtains the characteristic function of 
a part $I_2$ of $[1;n-(r_2-r_1)]$ with $r_1$ elements.
 If one cancels the letters $1$ in this word and then replaces $2$ by $1$, one obtains the 
characteristic function of a part $I_{1}$ of $[1;n-r_1]$ with $r_2-r_1$ elements.
 If one cancels the letters $0$ in this word and then replaces $2$ by $0$, one obtains the 
characteristic function of a part $I_{0}$ of $[1;r_2]$ with $r_1$ elements.
We just defined a map:
\begin{eqnarray}
  \label{eq:2}
\begin{array}{cll}
  \Schub(\Fl_n(r_1,r_2))&\longto&
\Part(r_1,n+r_1-r_2)\times \Part(r_2-r_1,n-r_1)\times \Part(r_1,r_2)\\
   (I^1,I^2)   &\longmapsto&(I_2,I_1,I_0).
\end{array}
\end{eqnarray}

Let $ (I^1,I^2)\in \Schub(\Fl_n(r_1,r_2))$. Set $\Lambda_i=\Lambda_{ (I^1,I^2)}\cap R_i$. 

\begin{prop}
  With above notation, $\Lambda_i$ is the partition associated to the part $I_i$, for 
$i=0,\,1$ and $2$.
\end{prop}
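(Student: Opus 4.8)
The plan is to unwind the two combinatorial descriptions of $\Lambda_{(I^1,I^2)}$ that have just been given, and to check they are literally the same box set. On one side, Figure~\ref{fig:Fl1}/\ref{fig:Fl2} describe $\Lambda_{(I^1,I^2)}$ by indexing the columns and rows of each rectangle $R_i$ by subsets of $[1;n]$ and declaring a box of $R_i$ black precisely when its column index is strictly smaller than its row index. On the other side, the map \eqref{eq:2} produces for each $i$ a subset $I_i$ of an interval, hence (via Paragraph~\ref{par:lambdaI}, or rather its restatement in Paragraph~\ref{sec:lambdaI} via the complement-transpose convention) a Young diagram. So the content of the proposition is: for each $i\in\{0,1,2\}$, the column/row indexing coming from $(I^1,I^2)$ restricted to $R_i$, when read as a partition, gives exactly $\lambda^{I_i}$ (with the orientation/complement conventions already fixed in Paragraphs~\ref{par:SchubGr}--\ref{sec:lambdaI}).

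First I would fix $i$ and identify, from the definition of $\chi_{(I^1,I^2)}$, which letters of the word survive to form $I_i$. For $i=2$: erase the $2$'s, keep $1$'s as $1$'s and $0$'s as $0$'s, so $I_2$ records the positions of $I^1$ among the $n-(r_2-r_1)$ entries of $[1;n]$ that are not in $I^2-I^1$. For $i=1$: erase the $1$'s, turn $2$'s into $1$'s, so $I_1$ records the positions of $I^2-I^1$ among the $n-r_1$ entries not in $I^1$. For $i=0$: erase the $0$'s, turn $2$'s into $0$'s, so $I_0$ records the positions of $I^1$ among the $r_2$ entries of $I^2$. Next I would write down the column- and row-index sets used in Figures~\ref{fig:Fl1}--\ref{fig:Fl2} for the rectangle $R_i$: for $R_2$ the columns are indexed by $I^1$ and the rows by $[1;n]-I^2$; for $R_1$ the columns are indexed by $I^2-I^1$ and the rows by $[1;n]-I^2$; for $R_0$ the columns are indexed by $I^1$ and the rows by $I^2-I^1$. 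In each case the black-box rule ``column index $<$ row index'' is exactly the same rule that, in Paragraph~\ref{sec:lambdaI}, cuts out $\Lambda_J$ for an ordinary Grassmannian from a subset $J$: there one indexes columns by $J$ and rows by $J^c$ and keeps the boxes whose column index is below their row index. So it suffices to check that the pair (column-index set, row-index set) attached to $R_i$ is, after an order-preserving relabelling to a standard interval, precisely the pair $(I_i,\ I_i^{\,c})$ in the relevant $\Part(r,N)$.

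That last bijective bookkeeping is the heart of the matter, and it is the step I expect to be the only real (if mild) obstacle: one must see that deleting a fixed letter-type from the word $\chi_{(I^1,I^2)}$ and then order-preserving-relabelling the remaining positions to $[1;N]$ carries the ``$I^1$-positions versus non-$I^2$-positions'' comparison on the nose to the ``$I_i$ versus $I_i^c$'' comparison defining $\lambda^{I_i}$. Concretely, for $i=2$ one restricts the relation ``$a\in I^1$, $b\in[1;n]-I^2$, $a<b$'' to the subword obtained by deleting $I^2-I^1$; since that deletion is the unique order isomorphism from $[1;n]\setminus(I^2-I^1)$ onto $[1;n-(r_2-r_1)]$ sending $I^1$ to $I_2$ and $[1;n]-I^2$ to $I_2^c$, and order isomorphisms preserve the relation $<$, the black-box set of $R_2$ is carried exactly onto $\Lambda_{I_2}$, i.e.\ the partition associated to $I_2$. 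The cases $i=1$ (delete $I^1$) and $i=0$ (delete $[1;n]-I^2$) are identical word-surgery arguments with the roles of the three letters permuted. I would present this as: ``By Lemma~\ref{lem:orderideal} each $\Lambda_i$ is a Young diagram in $R_i$; its column/row indexing from Figure~\ref{fig:Fl1} agrees, after the order-preserving identification of the retained positions with $[1;N]$, with the column/row indexing from Paragraph~\ref{sec:lambdaI} for the subset $I_i$; hence $\Lambda_i=\Lambda_{I_i}$, which by Paragraph~\ref{sec:lambdaI} is the partition associated to $I_i$.'' No deep input is needed beyond Lemma~\ref{lem:orderideal} (to know we are dealing with honest Young diagrams) and the definitions; everything else is a one-line verification that deleting letters from a word is an order isomorphism on the surviving positions.
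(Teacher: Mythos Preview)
Your proposal is correct and follows exactly the approach the paper takes: the paper's proof is the single sentence ``The proof is direct with the description of $\Lambda_{(I^1,I^2)}$ made in Paragraph~\ref{par:Lambdap},'' and what you have written is precisely that direct verification spelled out in full. You have simply made explicit the order-preserving relabelling and the case-by-case check that the paper leaves to the reader.
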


\begin{proof}
  The proof is direct  with the description of $\Lambda_{ (I^1,I^2)}$ made in Paragraph~\ref{par:Lambdap}.
\end{proof}

\bigskip\paragraphe
We now consider the particular case when 
$n-r_2=r_1$. So consider $\Fl_n(r,n-r)$. In this case $\Lambda(G/P)$ 
is symmetric under the diagonal dashed line on Figure~\ref{fig:GammaSp} below.
Let $\tau$ denote this symmetry.

For $i\in [1;n]$, we set $\overline{i}=n+1-i$. The symmetry $\tau$
corresponds to the involution $\overline{\square}$. More precisely, we have:

\begin{lemma}
\label{lem:sym}
Let $(I^1,I^2)\in\Schub(\Fl_n(r,n-r))$.
Set $J^1=I^1$, $J^2=I^2-I^1$ and $J^3=[1;n]-J^2$.

Consider $(\overline{J^3},\overline{J^2},\overline{J^1})$; 
and $(\overline{J^3},\overline{J^2}\cup  \overline{J^3})\in\Schub(\Fl_n(r,n-r))$.

Then, $\tau(\Lambda_{ (I^1,I^2)})=\Lambda_{(\overline{J^3},\overline{J^2}\cup  \overline{J^3})}$.
\end{lemma}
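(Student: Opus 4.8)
The plan is to reduce everything to the combinatorial description of $\Lambda_{(I^1,I^2)}$ given in Paragraph~\ref{par:Lambdap}, and then check that the involution $\tau$ (reflection in the anti-diagonal dashed line of Figure~\ref{fig:GammaSp}) carries the set of boxes defining $\Lambda_{(I^1,I^2)}$ onto the set of boxes defining $\Lambda_{(\overline{J^3},\overline{J^2}\cup\overline{J^3})}$. Concretely, in the setting $n-r_2=r_1$, i.e. $G/P=\Fl_n(r,n-r)$, the root poset $\Lambda(G/P)$ sits as a union of three rectangles $R_0,R_1,R_2$, and $\tau$ is the symmetry $\varepsilon_i-\varepsilon_j\mapsto \varepsilon_{\overline j}-\varepsilon_{\overline i}$, where $\overline i=n+1-i$. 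A box at row indexed by $a$ and column indexed by $b$ (with $b<a$ the membership condition) gets sent by $\tau$ to the box at row indexed by $\overline b$ and column indexed by $\overline a$, and indeed $b<a\iff \overline a<\overline b$, so the ``column-index less than row-index'' defining condition is preserved under $\tau$. Thus the whole argument is bookkeeping of which labels decorate which rows and columns after applying $\tau$.

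First I would set up the indexing of Figure~\ref{fig:Fl1} in the case $r_1=r$, $r_2=n-r$: the first $r$ columns are indexed by $I^1=J^1$, the next $n-2r$ columns by $I^2-I^1=J^2$, the first $n-2r$ rows by $I^2-I^1=J^2$ and the last $r$ rows by $[1;n]-I^2=[1;n]-J^1-J^2$. Note $J^3=[1;n]-J^2$ contains $J^1$, and $[1;n]-I^2=J^3-J^1$. So the row labels (top to bottom) are $J^2$ then $J^3-J^1$, and the column labels (left to right) are $J^1$ then $J^2$. Applying $\overline{\square}$ and reversing orders (since $\tau$ reverses both the row order and the column order), the image configuration has column labels $\overline{J^3-J^1}$ then $\overline{J^2}$ and row labels $\overline{J^2}$ then $\overline{J^1}$. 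I would then check that this is exactly the indexing prescribed by Paragraph~\ref{par:Lambdap} for the Schubert symbol $(\overline{J^3},\overline{J^2}\cup\overline{J^3})\in\Schub(\Fl_n(r,n-r))$: its first block is $\overline{J^3}$ (size $|J^3|=n-|J^2|-r$... wait, $r$; since $|J^2|=n-2r$ we get $|J^3|=2r$, and this is not size $r$) — so here I must be careful: the first part of the pair is $\overline{J^3}$ which should have $r$ elements. Let me instead verify $|\overline{J^3}|$: $|J^2|=|I^2|-|I^1|=(n-r)-r=n-2r$, so $|J^3|=|[1;n]-J^2|=2r$; that is not $r$. Hence the correct reading is that $(\overline{J^3},\overline{J^2}\cup\overline{J^3})$ has $\overline{J^3}$ of size... $2r$, which only makes sense if we intend the pair's second coordinate, not first — so I would re-parse the statement's notation, presumably $(\overline{J^3},\,\overline{J^2}\cup\overline{J^3})$ means $I^1_{\rm new}=\overline{J^3}$ has the role of the bigger flag step; but in $\Fl_n(r,n-r)$ the first step is $r$ and the second $n-r$. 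Since $|\overline{J^3}|=2r$ and $|\overline{J^2}\cup\overline{J^3}|=|\overline{J^2}|+|\overline{J^3}|$ would exceed $n$ unless they overlap — and $\overline{J^2}\subset\overline{J^3}$ iff $J^2\subset J^3$, which is false since $J^3=[1;n]-J^2$. I would therefore re-read: I expect the intended claim uses $n-r$ and $r$ consistently so that $|\overline{J^3}|=n-r$ (forcing a convention where, say, $J^3$ has $n-r$ elements because one takes a different complement), and the bulk of the proof is simply matching indices once the conventions are pinned down.

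The key steps, in order, are: (1) write $\tau$ explicitly on roots and observe it reverses both row- and column-orders of each rectangle while sending rectangle $R_i$ to $R_{\sigma(i)}$ for the appropriate permutation $\sigma$ of $\{0,1,2\}$ (here $R_0\to R_0$, and $R_1\leftrightarrow R_2$ or $R_1\to R_1$ depending on orientation — I'd read this off Figure~\ref{fig:GammaSp}); (2) recall from Paragraph~\ref{par:Lambdap} that membership of a box in $\Lambda_{(I^1,I^2)}$ is ``column index $<$ row index'' for the given labelings; (3) transport the labelings through $\overline{\square}$ and the order reversals, using $b<a\iff\overline a<\overline b$ to see the membership condition is preserved; (4) identify the transported labeling with the one Paragraph~\ref{par:Lambdap} assigns to $(\overline{J^3},\overline{J^2}\cup\overline{J^3})$, thereby concluding $\tau(\Lambda_{(I^1,I^2)})=\Lambda_{(\overline{J^3},\overline{J^2}\cup\overline{J^3})}$. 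The main obstacle, as the scratch computation above shows, is purely notational: getting the complement conventions and the identification of the pair $(\overline{J^3},\overline{J^2}\cup\overline{J^3})$ as a genuine element of $\Schub(\Fl_n(r,n-r))$ exactly right, so that the cardinalities $r$ and $n-r$ land on the correct coordinates. Once the labelings are correctly matched, the proof is a one-line appeal to the order-reversing bijection, and I would keep it brief as the author does, noting it is ``direct from the description in Paragraph~\ref{par:Lambdap}''.
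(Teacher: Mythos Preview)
Your approach is exactly the paper's: the paper's entire proof is the single sentence ``direct from the description of $\Lambda_{(I^1,I^2)}$ in Paragraph~\ref{par:Lambdap}'', and your steps (1)--(4) simply unpack that sentence. So strategically there is no difference.

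The confusion you ran into is not your fault: you have correctly spotted a typo in the statement. As written, $J^3=[1;n]-J^2$ has cardinality $n-(n-2r)=2r$, so $(\overline{J^3},\overline{J^2}\cup\overline{J^3})$ cannot lie in $\Schub(\Fl_n(r,n-r))$. The intended definition is $J^3=[1;n]-I^2$, so that $(J^1,J^2,J^3)$ is the ordered partition of $[1;n]$ associated to the flag $(I^1\subset I^2)$; this is also what the line ``Consider $(\overline{J^3},\overline{J^2},\overline{J^1})$'' is signaling. With that correction $|J^3|=r$, $|\overline{J^2}\cup\overline{J^3}|=n-r$, and your bookkeeping goes through: under $\tau$ the column labels $(J^1\mid J^2)$ and row labels $(J^2\mid J^3)$ of Paragraph~\ref{par:Lambdap} become column labels $(\overline{J^3}\mid\overline{J^2})$ and row labels $(\overline{J^2}\mid\overline{J^1})$, which is precisely the Paragraph~\ref{par:Lambdap} labeling for $(\overline{J^3},\overline{J^2}\cup\overline{J^3})$, and the membership rule ``column label $<$ row label'' is preserved because $b<a\iff\overline a<\overline b$.

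So: fix the typo, delete the scratch computation, and your four-step outline \emph{is} the proof.
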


\begin{proof}
  The proof is direct with the description of $\Lambda_{ (I^1,I^2)}$ made in Paragraph~\ref{par:Lambdap}.
\end{proof}

\subsection{The case $\Sp_{2n}$}
\label{sec:expleSp}

\paragraphe{\bf Root system.}
\label{par:rsSp}

Let $V$ be a $2n$-dimensional vector space 
and $\base=(e_1,\cdots,e_{2n})$ be a basis of $V$.
Consider the following $n\times n$ matrix $J_n$:
\begin{eqnarray}
  \label{eq:defJn}
J_n=\left(
  \begin{array}{ccc}
    &&1\\
&\revddots\\
1
  \end{array}
\right);
\end{eqnarray}

and the bilinear symplectic form $\omega$ on 
$V$ with matrix
$$
\omega=\left(
  \begin{array}{cc}
   0 &J_n\\
-J_n&0
  \end{array}
\right).
$$ 
Let $G$ be the associated symplectic group.
Set $T=\{\diag(t_1,\cdots,t_n,t_n^{-1},\cdots,t_1^{-1})\,:\,t_i\in\CC^*\}$.
Let $B$ be the Borel subgroup of $G$ consisting of upper triangular matrices of $G$.
For $i\in [1,n]$, let $\varepsilon_i$ denote the character of $T$ which maps 
$\diag(t_1,\cdots,t_n,t_n^{-1},\cdots,t_1^{-1})$ to $t_i$; we have 
$X(T)=\oplus_i\ZZ\varepsilon_i$.
Here, we have:
$$
\begin{array}{l}
  \Phi^+=\{\varepsilon_i\pm\varepsilon_j\,:\,1\leq i<j\leq n\}\cup 
\{2\varepsilon_i\,:\,1\leq i\leq n\},\\
\Delta=\{\alpha_1=\varepsilon_1-\varepsilon_2,\,\alpha_2=\varepsilon_2-\varepsilon_3,\cdots,\,
\alpha_{n-1}=\varepsilon_{n-1}-\varepsilon_n,\,\alpha_n=2\varepsilon_n\}.
\end{array}
$$
If $i\in [1;2n]$, we set $\overline{i}=2n+1-i$.
The Weyl group $W$ of $G$ is a subgroup of the Weyl group $S_{2n}$ of $\SL(V)$:
$$
W=\{w\in S_{2n}\,:\,w(\overline{i})=\overline{w(i)} \ \ \forall i\in [1;2n]\}.
$$
We will denote by $F(r)$ (resp. $\overline{F}(r)$) the span of $e_1,\cdots,e_r$ 
(resp. $e_{\overline{1}},\cdots,e_{\overline{r}}$).
We will denote by $V(r)$ the span of $e_{r+1},\cdots,e_{\overline{r+1}}$.
\\

\paragraphe{\bf Tangent space of isotropic Grassmanians.}
Let $\alpha_r$ be a simple root, $P_r$ be the corresponding maximal
standard parabolic 
subgroup of $G$ and $L_r$ be its Levi subgroup containing $T$.
The homogeneous space $G/P_r$ with base point $P_r$ is the isotropic Grassmannian $\Gr_\omega(r,2n)$ of 
$r$-dimensional subspaces $M$ of $V$ such that $\omega(M,M)=0$ with base point $F(r)$.

Note that $V=F(r)\oplus V(r)\oplus\overline{F}(r)$.
Moreover, $F(r)^{\perp_\omega}=F(r)\oplus V(r)$, and $\omega$ identifies $\overline{F}(r)$ 
with the dual of $F(r)$.
The tangent space $T_{F(r)}\Gr_\omega(r,2n)$ identifies with ${\rm Hom}(F(r),V(r))\oplus S^2F(r)^*$.
The natural action of $L_r$ which is isomorphic to $\GL(F(r))\times\Sp(V(r))$ makes this
identification equivariant.

For convenience we set for $i=1,\cdots,n$, $\varepsilon_{\overline{i}}:=-\varepsilon_i$.
Then, 
$$
\begin{array}{r@{\:}c@{\:}l@{\,}l@{\,}l}
\Phi^-&=&\{\varepsilon_i-\varepsilon_j\,:\, 1\leq j&                           <i\leq\overline{j}\leq 2n\},&{\rm\, and}\\ 
  \Lambda(\Gr_\omega(r,2n))&=&\{\varepsilon_i-\varepsilon_j\,:\, 1\leq j\leq r&<i\leq\overline{j}\leq 2n\}.
\end{array}
$$
We now represent each element of $\Lambda(\Gr_\omega(r,2n))$ by a box on Figure~\ref{fig:GammaSp}.
The box at row $i$ and column $j$ corresponds to $\varepsilon_{r+i}-\varepsilon_j$.

\begin{figure}\psset{unit=5mm}
\centering
\begin{pspicture}(0,0)(4,7)
\multido{\n=3+1}{5}{
\psline(0,\n)(4,\n)
}
\multido{\n=0+1,\i=1+1}{3}{
\psline(0,\n)(\i,\n)
}
\psline(0,0)(0,7)
 
\multido{\i=1+1,\n=0+1}{4}{
\psline(\i,\n)(\i,7)
} 

\uput[u](0.5,7){$\varepsilon_1$}
\uput[u](2,7){$\dots$}
\uput[u](3.5,7){$\varepsilon_r$}
\uput[l](0,6.5){$\varepsilon_{r+1}$}
\uput[l](0,3.5){$\vdots$}
\uput[l](0,0.5){$\varepsilon_{2n}$}

\psline[linestyle=dashed](0,0)(7,7)
\psline[linestyle=dotted](0,0)(7,0)(7,4)(4,4)

\psline[linewidth=2pt](0,4)(4,4)
\end{pspicture}
\caption{Roots of $T_{F(r)}\Gr_\omega(r,2n)$}
\label{fig:GammaSp}
\end{figure}

The boxes corresponding to roots of $S^2F(r)^*$ (resp. $\Hom(F(r),V(r))$) are in the triangular 
(resp. rectangular) part of Figure~\ref{fig:GammaSp}.\\
 
\paragraphe\label{par:wISp}
{\bf Schubert varieties of isotropic Grassmanians.}
If $I\in\Part(r,2n)$ then we set $\overline{I}=\{\overline{i}\,:\,i\in I\}$ and
$$
\Schub(\Gr_\omega(r,2n)):=\{I\in\Part(r,2n)\,:\, I\cap\overline{I}=\emptyset\}.
$$
The subspace  $F(I)$ belongs to $\Gr_\omega(r,2n)$ if and only if 
$I\in \Schub(\Gr_\omega(r,2n))$; so, the Schubert varieties $\Psi(I)$ of $\Gr_\omega(r,2n)$ are indexed  
by $I\in\Sch(\Gr_\omega(r,2n))$.
If $I=\{i_1<\cdots<i_r\}\in \Sch(r,2n)$, we set $i_{\overline{k}}=\overline{i_k}$ and write 
$(I\cup\overline{I})^c=\{i_{r+1}<\cdots<i_{\overline{r+1}}\}$. 
Then, the element of $W^{P_r}$ which corresponds to $\Psi(I)$ is
$w_I=(i_1,\cdots,i_{2n})$.\\

\paragraphe\label{par:p}
We now want to describe $\Lambda_I=\Lambda_{w_I}$.
Consider $(I\subset\overline{I}^c)\in\Schub(\Fl_{2n}(r,2n-r))$.
We draw $\Lambda_{(I,\overline{I}^c)}$ on Figure~\ref{fig:GammaSp} including the dotted part.

\begin{prop}
\label{prop:lambdaISp}
  \begin{enumerate}
  \item The part $\Lambda_{(I,\overline{I}^c)}$ is symmetric relatively to the dashed line.
  \item The part $\Lambda_I$ is the intersection of $\Lambda(\Gr_\omega(r,2n))$ and
$\Lambda_{(I,\overline{I}^c)}$.
  \end{enumerate}
\end{prop}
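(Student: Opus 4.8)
The plan is to reduce everything to the combinatorial description of inversion sets for the two-step flag manifold $\Fl_{2n}(r,2n-r)$ given in Paragraph~\ref{par:Lambdap}, together with the symmetry Lemma~\ref{lem:sym}. For part (i), I would first check that the pair $(I,\overline{I}^c)$ is exactly of the symmetric form to which Lemma~\ref{lem:sym} applies: with the notation there, set $J^1=I$, $J^2=\overline{I}^c-I$ and $J^3=[1;2n]-J^2$. Since $I\cap\overline{I}=\emptyset$ we have $\overline{I}\subset\overline{I}^c$, so $J^2=\overline{I}^c-I\supset\overline{I}$, and a short count shows $\overline{J^2}=I\cup(\text{middle part})$, so that $(\overline{J^3},\overline{J^2}\cup\overline{J^3})=(I,\overline{I}^c)$ again; in other words $(I,\overline{I}^c)$ is a fixed point of the involution of $\Schub(\Fl_{2n}(r,2n-r))$ described in Lemma~\ref{lem:sym}. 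That lemma then gives $\tau(\Lambda_{(I,\overline{I}^c)})=\Lambda_{(I,\overline{I}^c)}$, which is exactly (i).

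For part (ii) I would argue directly from the characterizations of the inversion sets as sets of roots. By Lemma~\ref{lem:partgen} applied inside $G=\Sp_{2n}$, $\Lambda_I=\{\alpha\in\Lambda(\Gr_\omega(r,2n)):-w_I\alpha\in\Phi^+_{\Sp}\}$, where $w_I=(i_1,\dots,i_{2n})\in W^{P_r}\subset S_{2n}$ as in Paragraph~\ref{par:wISp}. Applying the same lemma inside $\SL_{2n}$ gives $\Lambda_{(I,\overline{I}^c)}=\{\alpha\in\Lambda(\Fl_{2n}(r,2n-r)):-w_I\alpha\in\Phi^+_{\SL}\}$, using that $w_I$ is also the minimal representative for the $\SL_{2n}$-coset attached to $(I,\overline{I}^c)$. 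Now one uses the inclusions $\Lambda(\Gr_\omega(r,2n))\subset\Lambda(\Fl_{2n}(r,2n-r))$ (the rectangular part of Figure~\ref{fig:GammaSp}, i.e. the roots $\varepsilon_i-\varepsilon_j$ that are honest $\SL_{2n}$-roots with $j\le r<i\le 2n$) and $\Phi^+_{\SL}\cap(\text{span of }\varepsilon_i-\varepsilon_j)$ equals the long roots of $\Phi^+_{\Sp}$ of that shape. So for $\alpha=\varepsilon_i-\varepsilon_j\in\Lambda(\Gr_\omega(r,2n))$, the condition $-w_I\alpha\in\Phi^+_{\Sp}$ (which for a root of this type just means $w_I(j)<w_I(i)$, there being no short-root subtlety) coincides with $-w_I\alpha\in\Phi^+_{\SL}$. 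Hence $\alpha\in\Lambda_I\iff\alpha\in\Lambda_{(I,\overline{I}^c)}$, i.e. $\Lambda_I=\Lambda(\Gr_\omega(r,2n))\cap\Lambda_{(I,\overline{I}^c)}$, which is (ii).

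The one point requiring care — and the step I expect to be the main obstacle — is verifying that the \emph{same} Weyl group element $w_I=(i_1,\dots,i_{2n})$ simultaneously represents $\Psi(I)$ as an element of $W^{P_r}_{\Sp}$ and represents $\Omega(I,\overline{I}^c)$ as the minimal representative in $S_{2n}$, and that the roots $2\varepsilon_i$ of $S^2F(r)^*$ (the triangular part of Figure~\ref{fig:GammaSp}) play no role in the intersection because they are not roots of $\SL_{2n}$ at all. I would handle this by spelling out $w_I$ explicitly from the ordered list $i_1<\dots<i_r$, then $(I\cup\overline I)^c$ in increasing order, then $\overline{i_r}<\dots<\overline{i_1}$, checking it lies in $W^{P_r}_{\Sp}$ and that, forgetting the symplectic constraint, the induced ordering of $[1;2n]$ is precisely the one attached to the pair $(I,\overline{I}^c)$ in Paragraph~\ref{par:Lambdap}. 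Once this identification of the ambient $w_I$ is nailed down, both parts follow formally, and indeed — as the paper says — part (ii) can alternatively be read off directly from the pictorial rule for $\Lambda_{(I^1,I^2)}$ by intersecting with the rectangle-plus-triangle region of Figure~\ref{fig:GammaSp}.
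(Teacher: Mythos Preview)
Your approach is exactly the paper's: part (i) via Lemma~\ref{lem:sym}, part (ii) via the observation that the permutation $w_I=(i_1,\dots,i_{2n})$ of Paragraph~\ref{par:wISp} is simultaneously the minimal $S_{2n}$-representative for the pair $(I,\overline{I}^c)\in\Schub(\Fl_{2n}(r,2n-r))$. The paper's proof is two sentences; you have simply unpacked them.

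Two small slips to clean up. First, in your verification of (i) the set-theoretic bookkeeping is garbled: with $(I^1,I^2)=(I,\overline I^c)$ one has $J^2=\overline I^c\setminus I=(I\cup\overline I)^c$, which is \emph{disjoint} from $\overline I$ (not containing it), and $J^3=[1;2n]\setminus I^2=\overline I$ (there is a typo in the statement of Lemma~\ref{lem:sym}; $J^3$ must be the complement of $I^2$, not of $J^2$, for the cardinalities to match). The key point is then that $J^2$ is stable under $i\mapsto\overline i$, whence $\overline{J^3}=I$ and $\overline{J^2}\cup\overline{J^3}=J^2\cup I=\overline I^c$, so $(I,\overline I^c)$ is indeed a fixed point of the involution and $\tau(\Lambda_{(I,\overline I^c)})=\Lambda_{(I,\overline I^c)}$ follows. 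Your conclusion is right; only the intermediate containments need correcting.

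Second, your worry about the long roots is misplaced and your dismissal of it is for the wrong reason. The diagonal boxes $(i,j)=(\overline j,j)$ \emph{do} correspond to honest $\SL_{2n}$-roots $\varepsilon_{\overline j}-\varepsilon_j$; it is only upon restriction to the $\Sp$-torus that they become $-2\varepsilon_j$. So $\Lambda(\Gr_\omega(r,2n))$ sits inside $\Lambda(\Fl_{2n}(r,2n-r))$ box-for-box, diagonal included, and your main argument (that for any box $(i,j)$ the condition $-w_I\alpha\in\Phi^+$ reads $w_I(j)<w_I(i)$ on both sides) already handles every box uniformly. There is no separate case to treat.
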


\begin{proof}
The first assertion is a direct consequence of  Lemma~\ref{lem:sym}. 
Consider $W$ as a subgroup of $S_{2n}$ as in Paragraph~\ref{par:rsSp}.
Then, $w_I$ is the element of  $S_{2n}$ corresponding to the Schubert class
${(I,\overline{I}^c)}$ in $\Schub(\Fl_{2n}(r,2n-r))$ as in Paragraph~\ref{par:Lambda}.
The second assertion follows. 
\end{proof}

\subsection{The case $\SO_{2n+1}$}
\label{sec:SO}

\paragraphe{\bf Root system.}
\label{par:rsSO}

Let $V$ be a $2n+1$-dimensional vector space 
and $\base=(e_1,\cdots,e_{2n+1})$ be a basis of $V$.
We denote by $(x_1,\cdots,x_{2n+1})$ the dual basis. 
If $i\in [1;2n+1]$, we set $\overline{i}=2n+2-i$.
Let $G$ be the special orthogonal group associated to the quadratic form 
$$
Q=x_{n+1}^2+\sum_{i=1}^nx_ix_{\overline{i}}.
$$ 

Set $T=\{{\rm diag}(t_1,\cdots,t_n,1,t_n^{-1},\cdots,t_1^{-1})\,:\,t_i\in\CC^*\}$.
Let $B$ the Borel subgroup of $G$ consisting of upper triangular matrices of $G$.
Let $\varepsilon_i$ denote the character of $T$ which maps 
$\diag(t_1,\cdots,t_n,1,t_n^{-1},\cdots,t_1^{-1})$ to $t_i$; we have 
$X(T)=\oplus_{i=1}^n\ZZ\varepsilon_i$.
Here, we have:
$$
\begin{array}{l}
  \Phi^+=\{\varepsilon_i\pm\varepsilon_j\,:\,1\leq i<j\leq n\}\cup 
\{\varepsilon_i\,:\,1\leq i\leq n\},\\
\Delta=\{\alpha_1=\varepsilon_1-\varepsilon_2,\,\alpha_2=\varepsilon_2-\varepsilon_3,\cdots,\,
\alpha_{n-1}=\varepsilon_{n-1}-\varepsilon_n,\,\alpha_n=\varepsilon_n\}.
\end{array}
$$

The Weyl group $W$ of $G$ is a subgroup of the Weyl group $S_{2n+1}$ of $\SL(V)$:
$$
W=\{w\in S_{2n+1}\,:\,w(\overline{i})=\overline{w(i)} \ \ \forall i\in [1;2n+1]\}.
$$
We will denote by $F(r)$ (resp. $\overline{F}(r)$) the span of $e_1,\cdots,e_r$ 
(resp. $e_{\overline{1}},\cdots,e_{\overline{r}}$).
We will denote by $V(r)$ the span of $e_{r+1},\cdots,e_{\overline{r+1}}$.
\\

\paragraphe{\bf Tangent space of orthogonal Grassmanians.}
Let $\alpha_r$ be a simple root, $P_r$ be the corresponding maximal parabolic 
subgroup of $G$ and $L_r$ be its Levi subgroup containing $T$.
For $r\leq n$, we denote by $\Gr_Q(r,2n+1)$ the orthogonal Grassmannian  of 
$r$-dimensional subspaces $M$ of $V$ such that $Q_{|M}=0$.
The homogeneous space $G/P_r$ with base point $P_r$ is 
$\Gr_Q(r,2n+1)$ with base point $F(r)$.

Note that $V=F(r)\oplus V(r)\oplus\overline{F}(r)$.
Moreover, $F(r)^{\perp_Q}=F(r)\oplus V(r)$, and $Q$ identifies $\overline{F}(r)$ 
with the dual of $F(r)$.
The tangent space $T_{F(r)}\Gr_Q(r,2n+1)$ identifies with 
${\rm Hom}(F(r),V(r))\oplus \bigwedge^2F(r)^*$.
The natural action of $L_r$ which is isomorphic to $S(\GL(F(r))\times{\rm O}(V(r)))$ 
makes this identification equivariant.

We set for $i\in [1,n]$, 
$\varepsilon_{\overline{i}}:=-\varepsilon_i$, and
$\varepsilon_{n+1}=0$.
Then, we have:
$$
\begin{array}{r@{\:}c@{\:}l@{\,}l@{\,}l}
\Phi^-&=&\{\varepsilon_i-\varepsilon_j\,:\, j&                      <i<\overline{j}\},\, &{\rm and}\\ 
\Lambda(\Gr_Q(r,2n+1))&=&\{\varepsilon_i-\varepsilon_j\,:\, j\leq r& <i<\overline{j}\}.
\end{array}
$$
We now represent each element of $\Lambda(\Gr_Q(r,2n+1))$ by a box on Figure~\ref{fig:GammaSO}.

\begin{figure}\psset{unit=5mm}
\centering
\begin{pspicture}(0,0)(4,7)
\multido{\n=4+1}{4}{
\psline(0,\n)(4,\n)
}
\multido{\n=1+1,\i=1+1}{3}{
\psline(0,\n)(\i,\n)
}
\psline(0,1)(0,7)
 
\multido{\i=1+1,\n=1+1}{4}{
\psline(\i,\n)(\i,7)
} 

\uput[u](0.5,7){$\varepsilon_1$}
\uput[u](2,7){$\dots$}
\uput[u](3.5,7){$\varepsilon_r$}
\uput[l](0,6.5){$\varepsilon_{r+1}$}
\uput[l](0,3.5){$\vdots$}
\uput[l](0,0.5){$\varepsilon_{2n+1}$}

\psline[linestyle=dashed](0,0)(7,7)
\psline[linestyle=dotted](0,0)(7,0)(7,4)(4,4)

\psline[linewidth=2pt](0,4)(4,4)
\end{pspicture}
\caption{Roots of $T_{F(r)}\Gr_Q(r,2n+1)$}
\label{fig:GammaSO}
\end{figure}

The boxes corresponding to roots of $\bigwedge^2F(r)^*$ (resp. $\Hom(F(r),V(r))$) are in the triangular 
(resp. rectangular) part of Figure~\ref{fig:GammaSO}.\\
 
\paragraphe{\bf Schubert varieties of orthogonal  Grassmanians.}
If $I\in\Part(r,2n+1)$ then we set $\overline{I}=\{\overline{i}\,:\,i\in I\}$ and
$$
\Schub(\Gr_Q(r,2n+1)):=\{I\in\Part(r,2n+1)\,:\, I\cap\overline{I}=\emptyset\}.
$$
The subspace  $F(I)$ belongs to $\Gr_Q(r,2n+1)$ if and only if 
$I\in \Schub(\Gr_Q(r,2n+1))$; so, the Schubert varieties $\Psi(I)$ of $\Gr_Q(r,2n+1)$ are indexed  
by $I\in\Schub(\Gr_Q(r,2n+1))$.
If $I=\{i_1<\cdots<i_r\}\in \Schub(r,2n+1)$, we set $i_{\overline{k}}=\overline{i_k}$ and write 
$(I\cup\overline{I})^c=\{i_{r+1}<\cdots<i_{\overline{r+1}}\}$. 
Then, the element of $W^{P_r}$ which corresponds to $\Psi(I)$ is
$w_I=(i_1,\cdots,i_{2n+1})$.\\

\paragraphe\label{par:pSO}
We now want to describe $\Lambda_I=\Lambda_{w_I}$.
Consider $(I\subset\overline{I}^c)\in
\Schub(\Fl_{2n+1}(r,2n+1-r))$.
We draw $\Lambda_{(I,\overline{I}^c)}$ on Figure~\ref{fig:GammaSO} including the dotted part.
Then, we obtain easily:

\begin{prop}
\label{prop:lambdaSO}
  \begin{enumerate}
  \item The part $\Lambda_{(I,\overline{I}^c)}$ is symmetric relatively to the dashed line.
  \item The part $\Lambda_I$ is the intersection of $\Lambda(\Gr_Q(r,2n+1))$ and
$\Lambda_{(I,\overline{I}^c)}$.
  \end{enumerate}
\end{prop}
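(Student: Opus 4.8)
The plan is to mimic exactly the argument already used for the symplectic case in Proposition~\ref{prop:lambdaISp}, since the statement of Proposition~\ref{prop:lambdaSO} is formally identical and the underlying combinatorics of inversion sets does not see the difference between the $\Sp_{2n}$ and $\SO_{2n+1}$ root systems in any essential way. The only structural distinction is that $V(r)$ now carries a nondegenerate quadratic form rather than a symplectic one, so $L_r\cong S(\GL(F(r))\times{\rm O}(V(r)))$ and the ``triangular'' part of $\Lambda(\Gr_Q(r,2n+1))$ is $\bigwedge^2 F(r)^*$ instead of $S^2 F(r)^*$; this changes which boxes lie on the diagonal of Figure~\ref{fig:GammaSO} (the diagonal boxes, corresponding to the roots $2\varepsilon_i$ in type C, are simply absent in type B), but it does not affect the combinatorial identity being claimed. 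The middle $\varepsilon_{n+1}=0$ row plays no role because no root of $G/P_r$ involves it.

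For the first assertion, I would invoke Lemma~\ref{lem:sym} directly. The pair $(I\subset\overline{I}^c)$ lies in $\Schub(\Fl_{2n+1}(r,2n+1-r))$ precisely because $I\cap\overline{I}=\emptyset$ forces $I\subset\overline{I}^c$ and $|\overline{I}^c|=2n+1-r$. Setting $J^1=I$, $J^2=\overline{I}^c-I$ and $J^3=[1;2n+1]-J^2$, one checks that the bar involution on $[1;2n+1]$ sends this configuration to itself: $\overline{J^3}=\overline{J^2}^c$ and one verifies $(\overline{J^3},\overline{J^2}\cup\overline{J^3})=(I,\overline{I}^c)$ by a short index chase. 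Lemma~\ref{lem:sym} then says $\tau(\Lambda_{(I,\overline{I}^c)})=\Lambda_{(I,\overline{I}^c)}$, which is the claimed symmetry under the dashed diagonal line of Figure~\ref{fig:GammaSO}.

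For the second assertion, the key point is to identify, via $W\hookrightarrow S_{2n+1}$, the minimal representative $w_I\in W^{P_r}$ attached to the orthogonal Schubert variety $\Psi(I)$ with the $S_{2n+1}$-permutation representing the type-A Schubert class $(I,\overline{I}^c)\in\Schub(\Fl_{2n+1}(r,2n+1-r))$. This is exactly the definition made in Paragraph~\ref{par:rsSO}: $w_I=(i_1,\dots,i_{2n+1})$ where the first $r$ entries list $I$ in increasing order and the remaining entries list $(I\cup\overline I)^c$ in increasing order, which is the same recipe as in Paragraph~\ref{sec:lambdaI} applied to the flag $(I\subset\overline{I}^c)$. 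Once this identification is in hand, $\Lambda_{w_I}$ computed inside $\Gr_Q(r,2n+1)$ is, by Lemma~\ref{lem:partgen}, the set of $\alpha\in\Lambda(\Gr_Q(r,2n+1))$ with $-w_I\alpha\in\Phi^+$, and this is visibly the restriction of the type-A inversion set $\Lambda_{(I,\overline{I}^c)}\subset\Lambda(\Fl_{2n+1}(r,2n+1-r))$ to the sub-collection of roots $\Lambda(\Gr_Q(r,2n+1))=\{\varepsilon_i-\varepsilon_j:j\le r<i<\overline j\}$, since the positivity condition $-w_I\alpha\in\Phi^+$ for such $\alpha$ is the same whether read in $\Phi^+(G)$ or in $\Phi^+(\SL(V))$. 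That gives $\Lambda_I=\Lambda(\Gr_Q(r,2n+1))\cap\Lambda_{(I,\overline{I}^c)}$.

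The main obstacle, such as it is, is bookkeeping rather than mathematics: one must be careful that the ``restriction of roots'' comparison between $\Phi^+(\SO_{2n+1})$ and $\Phi^+(\SL_{2n+1})$ is literally an inclusion on the relevant subset $\{\varepsilon_i-\varepsilon_j\}$ (it is, after the identifications $\varepsilon_{\overline i}=-\varepsilon_i$, $\varepsilon_{n+1}=0$), and that no root $\varepsilon_i$ or $\varepsilon_i+\varepsilon_j$ of $G/P_r$ is missed or double-counted when one identifies the triangular region of Figure~\ref{fig:GammaSO} with a sub-triangle of Figure~\ref{fig:Fl1}. Since all of this was already carried out in the symplectic case and the present figures are drawn in exact parallel, the proof is genuinely ``direct with the description of $\Lambda_{(I^1,I^2)}$ made in Paragraph~\ref{par:Lambdap}'', just as stated.
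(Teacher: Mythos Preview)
Your proof is correct and follows essentially the same approach as the paper, which for Proposition~\ref{prop:lambdaSO} gives no separate argument at all (just ``we obtain easily'') and implicitly refers back to the symplectic proof of Proposition~\ref{prop:lambdaISp}: invoke Lemma~\ref{lem:sym} for the symmetry, then identify $w_I\in W\subset S_{2n+1}$ with the minimal representative for $(I,\overline{I}^c)\in\Schub(\Fl_{2n+1}(r,2n+1-r))$ and conclude via Lemma~\ref{lem:partgen}. Your verification that $(\overline{J^3},\overline{J^2}\cup\overline{J^3})=(I,\overline{I}^c)$ is exactly the index chase the paper leaves implicit.

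One small inaccuracy worth correcting: your remark that ``the middle $\varepsilon_{n+1}=0$ row plays no role because no root of $G/P_r$ involves it'' is not right. The row $i=n+1$ in Figure~\ref{fig:GammaSO} \emph{does} contain boxes, namely those for the short roots $-\varepsilon_j=\varepsilon_{n+1}-\varepsilon_j$ with $j\le r$, which lie in the $\Hom(F(r),V(r))$ part of the tangent space (since $e_{n+1}\in V(r)$). This row is treated exactly like any other and the argument goes through unchanged; the comment is simply unnecessary and should be deleted rather than amended.
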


\section{Levi-movability}
\label{sec:levimov}

In this section, we recall the Belkale-Kumar notion of Levi-movability (see~\cite{BK}).
We follow notation of Section~\ref{sec:paramschubert}.

\subsection{Cohomology of $G/P$}

\paragraphe
Let $\sigma_w$ denote the Poincar\'e dual of the homology class of $\Omega(w)$.
We have:
$$
{\rm H^*}(G/P,\ZZ)=\oplus_{w\in W^P} \ZZ\sigma_w.
$$
The dual of the class $\sigma_w$ is denoted by $\sigma_w^\vee$.
Note that $\sigma_e$ is the class of the point.
Let $\sigma_u,\,\sigma_v,\,\sigma_w$ be three Schubert classes (with
$u,v,w\in W^P$). 
If there exists an integer $d$ such that $\sigma_u.\sigma_v.\sigma_w=d\sigma_e$, 
we set $c_{uvw}=d$ and we set $c_{uvw}=0$ otherwise. 
These coefficients are the (symmetrized) structure coefficients 
of the cup product on 
${\rm H}^*(G/P,\ZZ)$ in the Schubert basis in the following sense:
$$
\sigma_u.\sigma_v=\sum_{w\in W^P} c_{uvw}\sigma_w^\vee;
$$
and $c_{uvw}=c_{vuw}=c_{uwv}$.\\

\paragraphe
Let $u,\,v$ and $w$ in $W^P$.
Let us consider the tangent space $T_u$ of 
the $u{-1}BuP/P$'s at the point $P$; and, similarly $T_v$ and $T_w$. 
Using the transversality theorem 
of Kleiman, Belkale-Kumar showed in \cite[Proposition~2]{BK} the 
following important lemma:

\begin{lemma}\label{lem:fondBK}
  The coefficient $c_{uvw}$ is nonzero if and only if there exist $p_u,p_v,p_w\in P$ such that 
the natural map
$$
T_P(G/P)\longto \frac{T_P(G/P)}{p_uT_u}\oplus \frac{T_P(G/P)}{p_vT_v}\oplus \frac{T_P(G/P)}{p_wT_w},
$$
is an isomorphism.
\end{lemma}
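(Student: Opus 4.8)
The plan is to deduce Lemma~\ref{lem:fondBK} from Kleiman's transversality theorem together with the standard fact that the cup product of Schubert classes is computed by intersecting generic translates of Schubert varieties. First I would recall the geometric meaning of the coefficient $c_{uvw}$: writing $\Omega(u)$, $\Omega(v)$, $\Omega(w)$ for Schubert varieties whose codimensions sum to $\dim(G/P)$, the integer $c_{uvw}$ counts the points (with multiplicity $1$, since the intersection is transverse and zero-dimensional) of $g_1\Omega(u)\cap g_2\Omega(v)\cap g_3\Omega(w)$ for generic $(g_1,g_2,g_3)\in G^3$; in particular $c_{uvw}\neq 0$ if and only if this triple intersection is nonempty for generic translates. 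By homogeneity of $G/P$ we may assume the common intersection point is the base point $P$, so after acting by $G$ we are asking whether there exist $g_1,g_2,g_3$ such that $g_i\Omega(u)$ (etc.) all pass through $P$ and meet transversally there.

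The key translation is from transversality of the $g_i$-translated Schubert varieties at $P$ to the isomorphism statement about tangent spaces. The variety $u^{-1}\Omega(u)$ is smooth at $P$ with centered tangent space $T_u\subset T_P(G/P)$ (as set up in Section~\ref{sec:paramschubert}); a translate $g\Omega(u)$ passing through $P$ has, at $P$, the tangent space $g' T_u$ for some $g'$ in the stabilizer $P$ of the base point (the translates of $\Omega(u)$ through $P$ are exactly the $p\cdot(u^{-1}\Omega(u))$ for $p\in P$, up to the action that fixes $P$). So the condition that three generic translates meet transversally at $P$ becomes: there exist $p_u,p_v,p_w\in P$ with
$$
p_uT_u\cap p_vT_v\cap p_wT_w=\{0\}
$$
together with the dimension count $\codim T_u+\codim T_v+\codim T_w=\dim T_P(G/P)$, which is precisely the statement that the natural map
$$
T_P(G/P)\longto \frac{T_P(G/P)}{p_uT_u}\oplus \frac{T_P(G/P)}{p_vT_v}\oplus \frac{T_P(G/P)}{p_wT_w}
$$
is an isomorphism (a linear map between spaces of equal dimension is an isomorphism iff it is injective, and injectivity is exactly the triviality of the intersection of the three subspaces). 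Here I would invoke Lemma~\ref{lem:partgen} to know that $\codim\Omega(w)=|\Lambda_w|=\codim T_w$, so the dimension bookkeeping is automatic once we restrict to the case where the codimensions sum correctly (and $c_{uvw}=0$ by definition otherwise).

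For the forward direction I would apply Kleiman's transversality theorem: since $G$ acts transitively on $G/P$, for generic $(g_1,g_2,g_3)$ the translates $g_1\Omega(u)$, $g_2\Omega(v)$, $g_3\Omega(w)$ intersect transversally (in the smooth locus) wherever they meet; if $c_{uvw}\neq 0$ this intersection is a nonempty finite set, pick a point, move it to $P$ by $G$, and read off the tangent-space isomorphism as above, extracting the $p_u,p_v,p_w\in P$ from the $g_i$. For the converse, if such $p_u,p_v,p_w$ exist then the translates $p_u(u^{-1}\Omega(u))$, $p_v(v^{-1}\Omega(v))$, $p_w(w^{-1}\Omega(w))$ meet transversally at $P$, hence their intersection class is nonzero in $\H^*(G/P,\ZZ)$, giving $c_{uvw}\neq 0$. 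The main obstacle is the careful verification that \emph{every} translate of $\Omega(w)$ passing through $P$ has centered tangent space of the form $p T_w$ with $p\in P$ — i.e. that the stabilizer of the base point acts transitively enough on the relevant translates — and, conversely, that an arbitrary configuration $(p_uT_u,p_vT_v,p_wT_w)$ of subspaces in general position is realized by actual translates of the Schubert varieties (not merely by their tangent spaces); this is where one genuinely needs Kleiman's theorem, applied with the group $P$ (or $G$) acting, rather than a purely linear-algebra argument. I would also take care that transversality "at $P$" of subvarieties that are only smooth at $P$ is the correct notion, restricting attention to a neighborhood of $P$, which is harmless since the expected-dimension count forces the intersection to be zero-dimensional and hence supported at isolated smooth points.
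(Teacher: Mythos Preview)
Your proposal is correct and matches the paper's approach: the paper does not actually prove this lemma but attributes it to Belkale--Kumar \cite[Proposition~2]{BK}, indicating only that it follows from Kleiman's transversality theorem. Your sketch is precisely a fleshing-out of that argument, so there is nothing to contrast.
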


Then, Belkale-Kumar defined Levi-movability:\\

\begin{defin}
The triple $(\sigma_u,\,\sigma_v,\,\sigma_w)$ is said to be {\it Levi-movable} if there exist
 $l_u,l_v,l_w\in L$ such that the natural map
$$
T_P(G/P)\longto \frac{T_P(G/P)}{l_uT_u}\oplus \frac{T_P(G/P)}{l_vT_v}\oplus \frac{T_P(G/P)}{l_wT_w},
$$
is an isomorphism.
\end{defin}

We set:
$$
c_{uvw}^\kbprod=\left\{
  \begin{array}{ll}
c_{uvw}&{\rm \ if\ }  (\sigma_u,\,\sigma_v,\,\sigma_w) {\rm\ is\ Levi-movable;}\\
    0&{\rm\ otherwise.}
  \end{array}
\right .
$$

Note that in \cite{RR}, an equivalent characterization of Levi-movability is given.
We define on the group ${\rm H}^*(G/P,\ZZ)$ a bilinear product $\kbprod$ by the formula:
$$
\sigma_u\kbprod\sigma_v=\sum_{w\in W^P} c_{uvw}^\kbprod\sigma_w^\vee.
$$
By \cite[Definition~18]{BK}, we have:

\begin{theo}
  The product $\kbprod$ is commutative, associative and satisfies Poincar\'e duality.
\end{theo}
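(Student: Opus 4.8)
The plan is to deduce all three properties from a single conceptual reinterpretation of $c_{uvw}^\kbprod$ as an honest intersection number on a quotient of the tangent space, combined with a dimension-count characterization of Levi-movability. First I would recall the key observation of Belkale--Kumar: by Lemma~\ref{lem:fondBK} and a further $\GL$-action argument, a triple $(\sigma_u,\sigma_v,\sigma_w)$ whose codimensions add up to $\dim(G/P)$ is Levi-movable if and only if the Levi $L$ acts with a dense orbit on the relevant product of quotient spaces, equivalently if and only if a certain $\chi_P$-semiinvariant function is nonzero; and crucially that Levi-movability forces $\sum_{\gamma}\langle\lambda_P,\gamma\rangle = 0$ where the sum runs over the roots in $\Lambda_u\cup\Lambda_v\cup\Lambda_w$ (a ``vanishing of the canonical character'' condition). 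This gives the slogan: $c_{uvw}^\kbprod$ is the structure constant of cup product \emph{restricted to the graded piece where the $\lambda_P$-weight vanishes}. So the right framework is a deformation of the cohomology ring.

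The cleanest route, which I would follow, is to realize $(\H^*(G/P,\ZZ),\kbprod)$ as the associated graded of a filtration (or as the $t\to 0$ limit of a one-parameter family of products) on $\H^*(G/P)$ indexed by the $\lambda_P$-weight. Concretely: assign to each Schubert class $\sigma_w$ the integer $\delta(w) := \sum_{\gamma\in\Lambda_w}\langle\lambda_P,\gamma\rangle$, decompose the ordinary product $\sigma_u\cdot\sigma_v=\sum c_{uvw}\sigma_w^\vee$ according to $\delta$, and observe (using Lemma~\ref{lem:orderideal}-type order-ideal combinatorics, or directly Belkale--Kumar's computation) that $\delta$ is superadditive for the cup product, i.e. $c_{uvw}\neq 0 \Rightarrow \delta(u)+\delta(v)+\delta(w^\vee)\ge \delta(e)$ with $\kbprod$ picking out exactly the equality case. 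Once this is set up, commutativity of $\kbprod$ is immediate from $c_{uvw}=c_{vuw}=c_{uwv}$ (the $\delta$-grading is symmetric in the three factors), associativity is inherited from associativity of the cup product by passing to the associated graded of a filtered ring, and Poincaré duality follows because the top graded piece is one-dimensional (spanned by $\sigma_e=[\mathrm{pt}]$) and the pairing $\sigma_u\kbprod\sigma_v \mapsto$ (coefficient of $\sigma_e$) agrees with the ordinary Poincaré pairing on the relevant graded component, hence is perfect.

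The main obstacle is establishing that $\kbprod$ really is the associated graded product of a genuine ring filtration — i.e. that the ``$\delta$-superadditivity'' holds and that discarding the strictly-superadditive terms yields an associative product rather than just a bilinear map. This is exactly the content of Belkale--Kumar's Definition~18 and the preceding propositions, and the honest argument goes through the geometry: one realizes $\H^*(G/P)$ via the $T$-action, uses that the relative tangent weights all lie in $-\Lambda(G/P)$, and shows that the structure constants of a product twisted by the cocharacter $\lambda_P$ extend to $t=0$; this is a Rees-algebra / degeneration argument rather than pure combinatorics. I would cite \cite[Definition~18]{BK} for the fact that the limit ring is well-defined and associative, so that in the present exposition the proof reduces to: (i) identify the filtration degree with $\delta(w)$, (ii) check symmetry of $\delta$ in the three arguments for commutativity, (iii) invoke the general fact that the associated graded of an associative filtered ring is associative, and (iv) observe that the top piece is one-dimensional to get Poincaré duality. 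The verification in step (i) that Levi-movability coincides with the equality case $\delta(u)+\delta(v)+\delta(w^\vee)=\delta(e)$ is where the real work sits, and it rests on the dense-orbit reformulation of Lemma~\ref{lem:fondBK} together with a dimension count for the $L$-action on $T_P(G/P)/l_uT_u \oplus T_P(G/P)/l_vT_v \oplus T_P(G/P)/l_wT_w$.
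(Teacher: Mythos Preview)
The paper does not actually give a proof of this theorem: it simply cites \cite[Definition~18]{BK} and states the result. Your proposal goes further and sketches the underlying Belkale--Kumar argument (realizing $\kbprod$ as the associated graded of a filtration on $\H^*(G/P)$ indexed by the $\lambda_P$-weight, then reading off commutativity, associativity, and Poincar\'e duality from general properties of associated graded rings). That sketch is faithful to the approach in \cite{BK}, so your proposal is correct and in fact more detailed than what the present paper provides.
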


Remark that if $G/P$ is cominuscule, $P$ and $L$-orbits in $T_PG/P$
are equal. In particular, in this case the product $\kbprod$ is the
usual cup product.

\section{Cones associated to groups}
\label{section:Eigencone}

\subsection{The tensor product cone}

In this section, we will define a generalization of the Horn cone for any semisimple group $G$.
We will also recall some results about these cones.
We follow notation of Section~\ref{sec:levimov}.

\bigskip\paragraphe{\bf  The Borel-Weil theorem.} 
\label{par:Lnu}
Let $\nu$ be a character of $B$. Let $\CC_\nu$ denote the field $\CC$ 
endowed with the action of $B$ defined by
$b.\tau=\nu(b)\tau$ for all $\tau\in\CC_\nu$ and $b\in B$. The fiber product 
$G\times_B \CC_{-\nu}$ is a $G$-linearized 
line bundle on $G/B$, denoted  by $\Li_\nu$. 
In fact, the map $ X(B)= X(T)\longto\Pic^G(G/B),\,\nu\longmapsto\Li_\nu$ is an isomorphism.
Moreover, $\Li_\nu$ is generated by its sections if and only if it has nonzero sections
if and only if $\nu$ is dominant; and,
${\rm H}^0(G/B,\Li_\nu)$ is isomorphic to the dual $V_\nu^*$ of the irreducible $G$-module
$V_\nu$ of highest weight $\nu$. 

\bigskip\paragraphe
We set: $X(T)_\QQ=X(T)\otimes \QQ$.
The set of triples $(\lambda,\,\mu,\,\nu)\in (X(T)^+)^3$ such that 
$V_{\lambda}\otimes V_{\mu}\otimes V_{\nu}$ contains  nonzero $G$-invariant
vectors is a finitely generated semigroup.
We will denote by $\lr(G)$ the convex hull in $X(T)_\QQ^3$ of this semigroup: 
it is a closed convex rational polyhedral cone.\\

Set $X=(G/B)^3$. 
Identifying $X(T^3)$ with $X(T)^3$, for any $(\lambda,\,\mu,\,\nu)\in X(T)^3$, we obtain
a $G^3$-linearized line bundle $\Li_{\lambda,\mu,\nu}$ on $X$.
Applying the Borel-Weil theorem, we obtain
$$
\lr(G)=\{(\lambda~,\mu,\nu)\in X(T)^3\otimes \QQ\,:\,\exists n>0\ \ 
{\rm H}^0(X,\Li_{\lambda,\mu,\nu}^{\otimes n})^G\neq\{0\}\}.
$$
Since $G$ is assumed to be semisimple, we have isomorphisms
$X(T^3)_\QQ\simeq\Pic^{G^3}(X)_\QQ\simeq\Pic^G(X)_\QQ$.
With these identifications, 
$\lr(G)$ is the closure of $\ac^G((G/B)^3)$ (see for example \cite[Proposition~10]{GITEigen}).

\bigskip\paragraphe
Let $\alpha$ be a simple root of $G$,
$P_\alpha$ denote the associated maximal standard parabolic subgroup  
and $L_\alpha$  denote its Levi subgroup containing $T$.
Set $W_\alpha=W_{L_\alpha}$.
Consider the one parameter subgroup $\omega_{\alpha^\vee}$ (with usual notation) of the center of  $L_\alpha$.
We now state the main result of \cite{BK}:

\begin{theo}
\label{th:BK}
Here $G$ is assumed to be semisimple.
 Let $(\lambda,\mu,\nu)\in X(T)^3_\QQ$ dominant.
Then,  $(\lambda,\mu,\nu)\in \lr(G)$ if and only if 
\begin{eqnarray}
  \label{eq:ineq}
\langle u\omega_{\alpha^\vee},\lambda\rangle+
\langle v\omega_{\alpha^\vee},\mu\rangle+\langle w\omega_{\alpha^\vee},\nu\rangle\leq 0,
\end{eqnarray}
for all simple root $\alpha$ and all triple $(u,v,w)\in W/W_\alpha$ with $c^\kbprod_{uvw}=1$.
\end{theo}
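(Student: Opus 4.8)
The plan is to deduce Theorem~\ref{th:BK} from the general result of Belkale--Kumar (Theorem~\ref{thi:BK} in the introduction, their main theorem in \cite{BK}) by a careful translation of notation. The statement here is not really a new theorem to be proved from scratch; rather, it is Theorem~\ref{thi:BK} rewritten with the cone $\lr(G)$ in place of $\Gamma(U)$ and with the pairing $\langle u\omega_{\alpha^\vee},\lambda\rangle$ in place of $\omega_\alpha(u^{-1}\xi)$. So the first step is to set up the dictionary: via the Borel--Weil theorem (Paragraph~\ref{par:Lnu}) and the identifications $X(T^3)_\QQ\simeq \Pic^{G^3}(X)_\QQ\simeq\Pic^G(X)_\QQ$ with $X=(G/B)^3$, the cone $\lr(G)$ is the closure of $\ac^G((G/B)^3)$, and a dominant triple $(\lambda,\mu,\nu)$ lies in $\lr(G)$ iff some power of $\Li_{\lambda,\mu,\nu}$ has a nonzero $G$-invariant section. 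On the other side, the eigencone $\Gamma(U)$ of the compact form is identified with the rational points of $\lr(G)$ through the Cartan--Killing form; this is the content of Section~\ref{section:Eigencone} and is standard (cf. \cite{GITEigen}).

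Next I would match the linear inequalities. In Theorem~\ref{thi:BK} the inequality attached to $(u,v,w)$ and $\alpha$ is $\omega_\alpha(u^{-1}\xi)+\omega_\alpha(v^{-1}\zeta)+\omega_\alpha(w^{-1}\eta)\le 0$. Writing $\xi,\zeta,\eta$ for the elements of $\lt^+$ corresponding to $\lambda,\mu,\nu$, and using that $\omega_\alpha(u^{-1}\xi)=\langle u\omega_{\alpha^\vee},\xi\rangle$ under the Killing-form identification of $\lt$ with $\lt^*$ — because $\omega_{\alpha^\vee}$ is precisely the coweight dual to $\omega_\alpha$, i.e.\ the fundamental coweight sitting in the center of $\Lie(L_\alpha)$, and $W$ acts by isometries — one gets exactly \eqref{eq:ineq}. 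I would also note that the indexing by Schubert classes $\sigma_u,\sigma_v,\sigma_w$ on $G/P_\alpha$ with $\sigma_u\kbprod\sigma_v\kbprod\sigma_w=[{\rm pt}]$ is the same as the condition $c^\kbprod_{uvw}=1$: indeed $c^\kbprod_{uvw}$ is by definition the coefficient of $\sigma_e^\vee=[{\rm pt}]^\vee$-style term, and since $\kbprod$ satisfies Poincar\'e duality (the Theorem just before this statement) the product of three classes of complementary dimension is $c^\kbprod_{uvw}[{\rm pt}]$, which equals $[{\rm pt}]$ precisely when $c^\kbprod_{uvw}=1$. Here I should be a little careful to check that $c^\kbprod_{uvw}\in\{0,1\}$ is not being assumed — the condition \eqref{eq:BKcond} is ``$=[{\rm pt}]$'', which forces the coefficient to be $1$, so the two formulations agree tautologically.

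I would then address the two remaining technical points. First, Theorem~\ref{thi:BK}/\cite{BK} is stated for dominant triples lying in $(\lt^+)^3$ and gives membership in $\Gamma(U)$; passing to the rational polyhedral cone $\lr(G)$ requires only that both $\Gamma(U)$ (after Killing-form identification) and $\lr(G)$ are the closures of the same ``locus of triples with invariants'', which is exactly the $\QQ$-version of the statement ``$(V_\lambda\otimes V_\mu\otimes V_\nu)^G\ne 0$ for some dilate'' — and this is the definition of $\lr(G)$ recalled in Paragraph~\ref{par:Lnu} together with Kapranov/Kapovich-type density of the semigroup points. Second, one must know the inequalities genuinely cut out the cone and not just a superset; but this is part of the cited theorem (it is an ``if and only if''), so nothing new is needed.

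The only genuine subtlety — and the step I expect to need the most care — is the normalization of the pairing and the choice of Schubert-class convention, because $\sigma_w$ can be taken either as the Poincar\'e dual of $\overline{BwP/P}$ or of $\overline{B^- wP/P}$, and the paper's own remark after Theorem~\ref{th:Belk} flags that these differ by Poincar\'e duality; I would verify that the convention fixed in Section~\ref{sec:schub} (Poincar\'e dual of the closure of $BwP/P$, so that $\sigma_e$ is the point class and the centered tangent space $T_Pw^{-1}\Omega(w)$ has weights the complement of $\Lambda_w$) is exactly the one under which Lemma~\ref{lem:fondBK} and the definition of Levi-movability are literally correct, and that with this convention the weight $\langle u\omega_{\alpha^\vee},\lambda\rangle$ carries the sign making \eqref{eq:ineq} an upper bound $\le 0$ rather than $\ge 0$. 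Once the sign and convention bookkeeping is pinned down, the theorem follows from \cite{BK} with no further argument.
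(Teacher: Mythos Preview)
Your proposal is appropriate, and in fact it does more work than the paper itself does: the paper gives no proof of Theorem~\ref{th:BK} at all. The sentence immediately preceding it reads ``We now state the main result of \cite{BK}'', and the theorem is simply quoted from Belkale--Kumar with no accompanying argument. So there is no ``paper's own proof'' to compare against; the result is treated as a black box imported from the literature, exactly as Theorem~\ref{thi:BK} is in the introduction.

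Your instinct that Theorem~\ref{th:BK} is nothing but Theorem~\ref{thi:BK} rewritten on the representation-theoretic side (via $\lr(G)$, the pairing $\langle\cdot,\cdot\rangle$, and the coweight $\omega_{\alpha^\vee}$) rather than on the compact side (via $\Gamma(U)$ and the weight $\omega_\alpha$) is correct, and the dictionary you set up---Borel--Weil, the Killing-form identification of $\lt$ with $\lt^*$, and Theorem~\ref{th:KN} linking $\lr(G)$ to the rational points of $\Gamma(U)$---is exactly the one the paper relies on implicitly. The convention check you flag (that $\sigma_e=[{\rm pt}]$ under the paper's Schubert indexing, so that $\sigma_u\kbprod\sigma_v\kbprod\sigma_w=[{\rm pt}]$ is literally $c^\kbprod_{uvw}=1$) is also right. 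None of this translation is carried out explicitly in the paper; it is left to the reader.
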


Let $\alpha$ and $(u,v,w)\in W/W_\alpha$ be as in the theorem. 
The set of  $(\lambda,\mu,\nu)\in \lr(G)$ for which inequality~\eqref{eq:ineq} becomes an
equality is a face  of $\lr(G)$ denoted by $\Face(\alpha,u,v,w)$.
The following statement, proved in \cite{GITEigen}, shows that no inequality~\eqref{eq:ineq} can be omitted 
in Theorem~\ref{th:BK}.

\begin{theo}
\label{th:eigen1}
Let $\alpha$ and $(u,v,w)\in W/W_\alpha$ be as in Theorem~\ref{th:BK}.
Then, $\Face(\alpha,u,v,w)$ is a codimension one face of $\lr(G)$ intersecting
 the strictly dominant chamber.
\end{theo}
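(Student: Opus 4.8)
The plan is to exhibit, for the given data $(\alpha,u,v,w)$ with $c^\kbprod_{uvw}=1$, an explicit point $(\lambda,\mu,\nu)$ in the strictly dominant chamber which lies on the face $\Face(\alpha,u,v,w)$ and at which the Belkale--Kumar inequality~\eqref{eq:ineq} is the \emph{only} active inequality among those of Theorem~\ref{th:BK}. Once such a point is produced, it follows from Theorem~\ref{th:BK} that $\lr(G)$ has full dimension in a neighborhood of the face, that the face is cut out by a single inequality, hence has codimension one, and that it meets the interior of the dominant chamber. The natural source of such a point is a GIT-theoretic construction: consider $X=(G/B)^3$ with a $G$-linearized ample line bundle $\Li=\Li_{\lambda,\mu,\nu}$, and translate the condition ``$(\lambda,\mu,\nu)\in\Face(\alpha,u,v,w)$'' into the statement that a suitable one-parameter subgroup $\tau$ (built from $\omega_{\alpha^\vee}$ and from representatives of $u,v,w$) is \emph{adapted} to a semistable but not stable point, in the sense of the Hilbert--Mumford numerical criterion. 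This is exactly the setup of \cite{GITEigen}, and the Levi-movability condition $c^\kbprod_{uvw}=1$ is designed to guarantee that the corresponding parabolic-induction/Luna-slice analysis produces a well-behaved GIT quotient.

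Concretely I would proceed as follows. First, fix the one-parameter subgroup $\lambda_0=\omega_{\alpha^\vee}$ of the center of $L_\alpha$, so that $P(\lambda_0)=P_\alpha$, and consider the associated Bia{\l}ynicki-Birula-type stratification on the three flag varieties; the triple of Schubert cells indexed by $u,v,w$ spans a sub-locus $C\subset X$ on which $\lambda_0$ (suitably conjugated by $u,v,w$) acts with the same weight on each factor, the weight being $\langle u\omega_{\alpha^\vee},\lambda\rangle+\langle v\omega_{\alpha^\vee},\mu\rangle+\langle w\omega_{\alpha^\vee},\nu\rangle$. Second, use Lemma~\ref{lem:fondBK} and the definition of Levi-movability: because $c^\kbprod_{uvw}=1$, the product of the three centered tangent spaces fills $T_P(G/P_\alpha)$ \emph{and} the fixed-point component for $\lambda_0$ inside $C$ is a single $L_\alpha^3$-orbit (or has a dense one), which is the geometric incarnation of the coefficient being exactly $1$. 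Third, run the GIT machinery of Section~\ref{par:nonstandardGIT} together with the dimension count of Lemma~\ref{lem:modularityflag}: a point in this component is semistable for an appropriate $\Li$ in the relative interior of the face, its orbit closure meets the unique closed orbit in its fiber, and the isotropy/deformation computation shows the GIT quotient is positive-dimensional along the face while the inequality is active, forcing the face to have codimension one. Finally, to place the witnessing $(\lambda,\mu,\nu)$ in the strictly dominant chamber, perturb within the relative interior of $\Face(\alpha,u,v,w)$ using the fact (Theorem~\ref{th:BK}) that the only linear constraint near a generic face point is~\eqref{eq:ineq} itself; a small generic perturbation keeps us on the face and moves us into the open dominant cone.

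The main obstacle is the codimension-one claim, i.e. showing that no \emph{other} inequality of Theorem~\ref{th:BK} is simultaneously active at the witnessing point and that the face is genuinely a facet rather than a lower-dimensional face. This requires the precise input that $c^\kbprod_{uvw}=1$ (not merely nonzero): a Levi-movable triple with coefficient $1$ corresponds to a GIT quotient which is, near the relevant stratum, zero-dimensional transverse to the face — equivalently, the reduction/parabolic-induction step produces an $H$-variety with a dense orbit — so the face picks up exactly one degree of freedom's worth of degeneracy. Making this rigorous means invoking the structure theorems of \cite{GITEigen} on well-covering pairs and the behavior of $\ac^G(X)$ under the map associated to $\tau=(u,v,w)\cdot\lambda_0$, checking that the induced boundary stratum is itself of the expected dimension, and then quoting Proposition~\ref{prop:dimquotconst} and Lemma~\ref{lem:modularityflag} to pin down the dimension of the quotient on the face versus in the interior. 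Everything else — the existence of a semistable witness, the identification of the weight with the left-hand side of~\eqref{eq:ineq}, and the perturbation into the dominant chamber — is routine once this dimension bookkeeping is in place.
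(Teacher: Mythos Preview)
The paper does not contain its own proof of this theorem: it is simply quoted from \cite{GITEigen}. So there is nothing internal to compare against beyond the surrounding statements (Theorem~\ref{th:facedom} and Corollary~\ref{cor:face1}), which already encode the mechanism actually used.

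Your sketch is in the right general framework---GIT on $X=(G/B)^3$, Hilbert--Mumford for the one-parameter subgroup $\omega_{\alpha^\vee}$, and the well-covering pairs of \cite{GITEigen}---but two points are either circular or incorrect. First, your opening strategy (``find a strictly dominant point on the face at which \eqref{eq:ineq} is the only active inequality, then conclude codimension one'') presupposes what you are trying to prove: the existence of such a point is equivalent to the face being a facet meeting the open chamber, so it cannot be the starting input. Second, the claim that ``the fixed-point component for $\lambda_0$ inside $C$ is a single $L_\alpha^3$-orbit (or has a dense one), which is the geometric incarnation of the coefficient being exactly $1$'' is wrong: that component is $C(u,v,w)\simeq (L_\alpha/B_L)^3$ regardless of $(u,v,w)$, and the diagonal $L_\alpha$-action on it essentially never has a dense orbit. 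The coefficient $c^\kbprod_{uvw}=1$ is an intersection number on $G/P_\alpha$, not a statement about orbits in $C(u,v,w)$.

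The actual argument in \cite{GITEigen} runs through the restriction map $\rho_{uvw}$ of Paragraph~\ref{par:defrho} and Theorem~\ref{th:facedom}: one shows that when the pair is well-covering (equivalently $c^\kbprod_{uvw}=1$), the map $\rho_{uvw}$ carries $\Face(\alpha,u,v,w)$ onto $\lr(L_\alpha)$, which has codimension one in $X(T)^3_\QQ$ because $P_\alpha$ is maximal (center of $L_\alpha$ has rank one). Intersecting the strictly dominant chamber then follows because a generic regular dominant point of $\lr(L_\alpha)$ pulls back under $\rho_{uvw}$ to a regular dominant point. The proof of Corollary~\ref{cor:face1} in the paper displays exactly this logic in the converse direction. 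Your third paragraph gestures at ``parabolic induction'' and ``structure theorems on well-covering pairs'', which is the right reference, but the concrete mechanism---$\rho_{uvw}$ identifying the face with a codimension-one cone---is what you should articulate in place of the dense-orbit heuristic.
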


\bigskip\paragraphe\label{par:defrho}
We now want to understand better the faces $\Face(\alpha,u,v,w)$.
Consider the fixed point set $X^{\omega_{\alpha^\vee}}$ of $\omega_{\alpha^\vee}$ acting on $X$.
Then, 
$$C(u,v,w)=
L_\alpha u^{-1}B\times 
L_\alpha v^{-1}B\times L_\alpha w^{-1}B
$$
is an irreducible component of $X^{\omega_{\alpha^\vee}}$.
Note that $B_L=B\cap L_\alpha$ is a Borel subgroup of $L_\alpha$.
If each $w_i$ belongs to $W^P$, we fix an isomorphism
between  $(L_\alpha/B_L)^3$ and $C(u,v,w)$ by 
$$
(l_uB_L,l_vB_L,l_wB_L)\longmapsto
l_uu^{-1}B\times l_vv^{-1}B\times l_ww^{-1}B;
$$
it is well-defined since $u,\,v,\,w\in W^P$.
In particular, the group $\Pic^{L_\alpha^3}(C(u,v,w))$ is isomorphic to $\Pic^{L_\alpha^3}((L_\alpha/B_L)^3)$; 
that is, to $X(T)^3$. With these identifications the restriction morphism 
$\Pic^{G^3}(X)\longto\Pic^{L_\alpha^3}(C(u,v,w))$ is
$$
\begin{array}{cccc}

\rho_{uvw}\,:&X(T)^3&\longto& X(T)^3\\
                   &(\lambda,\mu,\nu)&\longmapsto&(u^{-1}\lambda,v^{-1}\mu,w^{-1}\nu).
\end{array}
$$

The following statement is \cite[Lemma~1]{carquois}:

\begin{theo}
\label{th:facedom}
Let $\alpha$ and $(u,v,w)\in W/W_\alpha$ be such that
$\sigma_{u}.\sigma_{v}.\sigma_{w}\neq 0$.
Then, for any  $(\lambda,\mu,\nu)\in \lr(G)$, 
$$
\langle u\omega_{\alpha^\vee},\lambda\rangle+
\langle v\omega_{\alpha^\vee},\mu\rangle+\langle w\omega_{\alpha^\vee},\nu\rangle\leq 0,
$$
holds.
Let $\Face(\alpha,u,v,w)$ denote the corresponding face of $\lr(G)$.
If $(\lambda,\mu,\nu)\in X(T)^3\otimes\QQ$ is dominant
then  $(\lambda,\mu,\nu)\in \Face(\alpha,u,v,w)$ if and only if
$ \rho_{uvw}(\lambda,\mu,\nu)\in\lr(L_\alpha)$.
\end{theo}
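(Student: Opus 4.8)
The plan is to identify the face $\Face(\alpha,u,v,w)$ with a GIT-theoretic object and then to compare it with the corresponding object on the component $C(u,v,w)$. First I would recall that, since $G$ is semisimple, $\lr(G)$ is the closure of the ample GIT-cone $\ac^G(X)$ for $X=(G/B)^3$, and that by Theorem~\ref{th:eigen1} (or directly from the GIT description of $\lr(G)$) the face $\Face(\alpha,u,v,w)$ is exactly the set of dominant $(\lambda,\mu,\nu)$ such that the one-parameter subgroup determining $(u,v,w)$ computes the semistability numerical function with value zero; concretely, for the one-parameter subgroup $\tau$ of $G^3$ conjugate to $(\omega_{\alpha^\vee},\omega_{\alpha^\vee},\omega_{\alpha^\vee})$ via $(u^{-1},v^{-1},w^{-1})$, a dominant $(\lambda,\mu,\nu)$ lies on $\Face(\alpha,u,v,w)$ iff the corresponding base point $z=(u^{-1}B,v^{-1}B,w^{-1}B)$ of $C(u,v,w)$ is semistable for $\Li_{\lambda,\mu,\nu}$ with $\mu^{\Li}(z,\tau)=0$. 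This is where the hypothesis $\sigma_u\sigma_v\sigma_w\neq 0$ enters: it guarantees (via Lemma~\ref{lem:fondBK}, Kleiman transversality) that $z$ is not forced to be unstable, so $\Face(\alpha,u,v,w)$ is a genuine nonempty face.

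Next I would invoke the standard principle (due to Luna--Kempf, and used throughout \cite{GITEigen,GITEigen2,carquois}) that semistable points admitting a destabilizing-with-value-zero one-parameter subgroup $\tau$ are, up to the $G$-action, semistable for the induced action of the centralizer $G^\tau$ on the fixed-point component $X^\tau$, with the restricted polarization. Here $G^{\tau}$ is conjugate to $L_\alpha^3$ and $X^{\tau}$ contains $C(u,v,w)\cong (L_\alpha/B_L)^3$ as the relevant component; the restriction of $\Li_{\lambda,\mu,\nu}$ to $C(u,v,w)$ is, under the identification preceding the statement, exactly $\Li_{\rho_{uvw}(\lambda,\mu,\nu)}$ on $(L_\alpha/B_L)^3$. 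Therefore $(\lambda,\mu,\nu)\in\Face(\alpha,u,v,w)$ iff some positive tensor power of $\Li_{\rho_{uvw}(\lambda,\mu,\nu)}$ has a nonzero $L_\alpha^3$-invariant (equivalently diagonal-$L_\alpha$-invariant) section, i.e.\ iff $\rho_{uvw}(\lambda,\mu,\nu)\in\overline{\ac^{L_\alpha}((L_\alpha/B_L)^3)}=\lr(L_\alpha)$, using again the Borel--Weil identification for the Levi.

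I expect the main obstacle to be the careful bookkeeping in the two implications of this equivalence, and in particular the ``only if'' direction: one must pass from a $G$-semistable point of $X$ lying on the face to an $L_\alpha$-semistable point of the Levi flag variety, which requires knowing that the destabilizing one-parameter subgroup can be taken to be exactly $\tau$ (not merely conjugate to it within the relevant component) and that invariant sections survive restriction to the fixed-point component and extend back. This is precisely the content of the adapted one-parameter subgroup / slice arguments of \cite{GITEigen}, so I would either cite \cite[Lemma~1]{carquois} — which is stated to be exactly this theorem — essentially verbatim, or reproduce its short proof: reduce to the statement about GIT-quotients of the component $C(u,v,w)$ and then translate via $\rho_{uvw}$ and Borel--Weil. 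The converse ``if'' direction is easier: given an $L_\alpha$-invariant section downstairs, one lifts it to a $G$-invariant section of a power of $\Li_{\lambda,\mu,\nu}$ over the attracting set of $C(u,v,w)$ (Bia{\l}ynicki-Birula), showing $(\lambda,\mu,\nu)\in\lr(G)$ with the face equality forced by the weight-zero condition built into $\rho_{uvw}$.
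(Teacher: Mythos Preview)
Your proposal is aligned with the paper's treatment: the paper does not supply its own proof of this theorem but simply attributes it as \cite[Lemma~1]{carquois}, and you correctly identify this and sketch the GIT argument behind that lemma (Hilbert--Mumford on $X=(G/B)^3$, reduction to the fixed-point component $C(u,v,w)$ for the centralizer $L_\alpha$, and translation via $\rho_{uvw}$ and Borel--Weil). One small caution: your appeal to Theorem~\ref{th:eigen1} is misplaced, since that result assumes $c^{\kbprod}_{uvw}=1$ rather than merely $\sigma_u\sigma_v\sigma_w\neq 0$; the inequality part of the present statement should instead come directly from the numerical criterion applied at a general point of the Schubert-cell intersection, as in \cite{GITEigen,carquois}.
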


The following criterion to decide if $c_{uvw}^\kbprod=1$ or not
will play a central role in the sequence of this article.

\begin{coro}
\label{cor:face1}
  Let $\alpha$ and $(u,v,w)\in W/W_\alpha$ be as in Theorem~\ref{th:facedom}.
Then, $c_{uvw}^\kbprod=1$ if and only if
$\Face(\alpha,u,v,w)$ intersects the interior of the dominant chamber of $X(T^3)_\QQ$.
\end{coro}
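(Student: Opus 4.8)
The plan is to deduce this corollary directly from Theorem~\ref{th:facedom}, which characterizes $\Face(\alpha,u,v,w)$ via the eigencone $\lr(L_\alpha)$ of the Levi. First I would recall the following facts about the faces $\Face(\alpha,u,v,w)$: by Theorem~\ref{th:eigen1} the face has codimension one in $\lr(G)$ and meets the strictly dominant chamber precisely when $c_{uvw}^\kbprod=1$; more to the point, I want a statement not depending on knowing the answer in advance, so I will instead argue via dimension counting. The key dimension input is that $\lr(G)$ is full-dimensional in $X(T)^3_\QQ$ (it has nonempty interior, since, e.g., it contains the strictly dominant chamber nontrivially), and similarly $\lr(L_\alpha)$ is full-dimensional in $X(T)^3_\QQ$ when viewed as a cone for the semisimple part of $L_\alpha$ — more precisely, $\rho_{uvw}(\lr(G))$ intersected with the constraint is related to $\lr(L_\alpha)$ by Theorem~\ref{th:facedom}.

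The main step is the dimension comparison. By Theorem~\ref{th:facedom}, for dominant $(\lambda,\mu,\nu)$ we have $(\lambda,\mu,\nu)\in\Face(\alpha,u,v,w)$ if and only if $\rho_{uvw}(\lambda,\mu,\nu)\in\lr(L_\alpha)$. Now $\rho_{uvw}$ is a linear automorphism of $X(T)^3_\QQ$ (it is invertible: $(\lambda,\mu,\nu)\mapsto(u\lambda,v\mu,w\nu)$ is its inverse). So $\Face(\alpha,u,v,w)$, intersected with the dominant cone, is the $\rho_{uvw}$-preimage of $\lr(L_\alpha)$. The crucial observation is the relation between $\dim\lr(L_\alpha)$ and the coefficient $c_{uvw}^\kbprod$: combining the definitions from Section~\ref{sec:levimov}, $c_{uvw}^\kbprod$ equals $1$ exactly when the restricted GIT quotient $C(u,v,w)^{\rm ss}(\bar\Li)\quot L_\alpha$ is a point for $\bar\Li$ in the relative interior of the appropriate GIT cone — i.e., when the "Levi-moved" triple of Schubert cells meets transversally at a single point. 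I would relate this to Lemma~\ref{lem:modularityflag}: the dimension of the quotient equals $\mathrm{mod}(C(u,v,w),L_\alpha)$, and $c_{uvw}^\kbprod=1$ precisely when this quotient is zero-dimensional while the total codimension condition $\sigma_u.\sigma_v.\sigma_w\neq 0$ (actually $=c_{uvw}\sigma_e$) holds. On the other hand, $\Face(\alpha,u,v,w)$ meets the interior of the dominant chamber if and only if $\rho_{uvw}^{-1}(\lr(L_\alpha))$ meets the strictly dominant chamber, which happens if and only if $\lr(L_\alpha)$ contains a strictly dominant point of $X(T)^3_\QQ$ — equivalently, $\lr(L_\alpha)$ is full-dimensional, equivalently the generic $L_\alpha$-orbit on $C(u,v,w)$ for a strictly-dominant linearization has a quotient of the "expected" dimension forcing $c_{uvw}^\kbprod=1$.

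Concretely, the forward direction: if $c_{uvw}^\kbprod=1$, then by Theorem~\ref{th:BK} the inequality~\eqref{eq:ineq} is one of the facet inequalities cutting out $\lr(G)$, and by Theorem~\ref{th:eigen1} its face meets the strictly dominant chamber. Conversely, if $\Face(\alpha,u,v,w)$ meets the interior of the dominant chamber, pick a strictly dominant $(\lambda,\mu,\nu)$ on it; by Theorem~\ref{th:facedom}, $\rho_{uvw}(\lambda,\mu,\nu)\in\lr(L_\alpha)$, and since $\rho_{uvw}$ preserves strict dominance relative to $L_\alpha$ only after a base change, I would argue that $\lr(L_\alpha)$ then has a point in the relative interior of its dominant chamber, so $\dim\lr(L_\alpha)$ is maximal; translating via Lemma~\ref{lem:modularityflag} and the definition of Levi-movability, this forces the Levi-movable structure constant $c_{uvw}^\kbprod$ to be exactly $1$ (it is nonzero by the dimension of the quotient being $0$, and equal to $1$ by Fulton-conjecture-type saturation applied inside $L_\alpha$, as in the proof of the algorithm). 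The main obstacle I anticipate is the converse direction: making precise the equivalence between "$\lr(L_\alpha)$ full-dimensional / meets its strict dominant chamber" and "$c_{uvw}^\kbprod=1$" rather than merely "$c_{uvw}^\kbprod\neq 0$" — this requires carefully importing the saturation/Fulton-type argument (as used in the proof of the algorithm in Section~\ref{sec:algogen}) into the Levi subgroup $L_\alpha$, using that $C(u,v,w)\cong(L_\alpha/B_L)^3$ and that the relevant line bundle is the pullback under $\rho_{uvw}$ of an ample one.
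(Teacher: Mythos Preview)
Your forward direction is fine and matches the paper: Theorem~\ref{th:eigen1} directly gives that $c_{uvw}^\kbprod=1$ implies the face meets the strictly dominant chamber.

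For the converse, however, you are working far too hard and the GIT/Fulton-conjecture route you sketch does not obviously close. The paper's argument is a two-line dimension count that you have overlooked. Since $P_\alpha$ is a \emph{maximal} parabolic, the center of $L_\alpha$ has rank one; hence $\lr(L_\alpha)$ has codimension exactly one in $X(T)^3_\QQ$ (the single linear constraint coming from that one-dimensional center). Because $\rho_{uvw}$ is a linear automorphism, Theorem~\ref{th:facedom} then forces $\Face(\alpha,u,v,w)$ to have codimension one as soon as it meets the interior of the dominant chamber. A codimension-one face of $\lr(G)$ meeting the open dominant chamber is a genuine facet not coming from a chamber wall, so its supporting inequality must appear in the list of Theorem~\ref{th:BK}; that list is indexed precisely by triples with $c_{uvw}^\kbprod=1$, and the linear form determines $(u,v,w)$, so $c_{uvw}^\kbprod=1$.

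Your proposed converse has concrete problems beyond being circuitous. First, the statement ``$\lr(L_\alpha)$ is full-dimensional'' is not the relevant condition and is in any case independent of $(u,v,w)$; what matters is that $\lr(L_\alpha)$ has codimension one, which is what makes the face a facet. Second, you conflate the dominant chamber of $G$ with that of $L_\alpha$ when passing through $\rho_{uvw}$. Third, the attempt to extract $c_{uvw}^\kbprod=1$ (rather than merely $\neq 0$) from Lemma~\ref{lem:modularityflag} and a Fulton-type argument does not go through as stated: $c_{uvw}^\kbprod$ is a Schubert intersection number on $G/P_\alpha$, not the dimension of a space of $L_\alpha$-invariants on $(L_\alpha/B_L)^3$, so the Fulton conjecture does not apply to it directly. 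The paper bypasses all of this by appealing to the completeness of the facet list in Theorem~\ref{th:BK}.
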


\begin{proof}
The direct implication is a consequence of Theorem~\ref{th:eigen1}.
Conversely,
the cone $\lr(L_\alpha)$ 
has codimension one (the rank of 
the center of $L_\alpha$) in $X(T)^3_\QQ$.
So, since $\Face(\alpha,u,v,w)$ intersects the interior of the dominant chamber of $X(T^3)_\QQ$,
Theorem~\ref{th:facedom} implies that $\Face(\alpha,u,v,w)$ has codimension one.
So, the corresponding inequality has to appear in Theorem~\ref{th:BK}. 
This implies that $c_{uvw}^\kbprod=1$.
\end{proof}

\subsection{The eigencone}
\label{sec:eigencone}

Let us fix a maximal compact subgroup $U$ of $G$ in such a way that $T\cap U$ is a Cartan subgroup of $U$.
Let $\lu$ and $\lt$ denote the Lie algebras of $U$ and $T$.
Let $\lt^+$ be the Weyl chamber of $\lt$ corresponding to $B$.
Let $\sqrt{-1}$ denote the usual complex number.
It is well known that $\sqrt{-1}\lt^+$ is contained in $\lu$ and that the map:
$$
\begin{array}{ccc}
  \lt^+&\longto&\lu/U\\
\xi&\longmapsto&U.(\sqrt{-1}\xi)
\end{array}
$$
is an homeomorphism.
Consider the set
$$
\Gamma(U):=\{(\xi,\zeta,\eta)\in(\lh^+)^3\,:\,
U.(\sqrt{-1}\xi)+U.(\sqrt{-1}\zeta)+U.(\sqrt{-1}\eta)\ni 0\}.
$$

Let $\lu^*$ (resp. $\lt^*$) denote the dual (resp. complex dual) of $\lu$ (resp. $\lt$).
Let $\lt^{*+}$ denote the dominant chamber of $\lt^*$ corresponding to $B$.
By taking the tangent map at the identity, one can embed $X(T)^+$ in $\lt^{*+}$.
Note that, this embedding induces a rational structure on the complex vector space $\lt^*$.
In particular, we can embed $\lr(G)$ in $(\lt^{*+})^3$:
let $\tilde\lr(G)$ denote the so obtained part of $\lt^{*+}$.

Now, using the Cartan-Killing form, we identify $\lt^+$ and $\lt^{*+}$.
In particular, we can embed $\Gamma(U)$ in $(\lt^{*+})^3$; the so obtained cone is
denoted by $\tilde\Gamma(U)$.

\begin{theo}
\label{th:KN}
The set $\Gamma(U)$ is a closed convex  polyhedral cone. 
Moreover, $\tilde\lr(G)$ is the set of  the rational points of $\tilde\Gamma(U)$.
\end{theo}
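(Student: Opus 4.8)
The plan is to deduce Theorem~\ref{th:KN} from the GIT description of $\lr(G)$ established in the previous subsection, together with the classical relationship between moment maps and GIT quotients (the Kempf--Ness / Kirwan correspondence). First I would recall that, by the discussion in Paragraph~\ref{par:Lnu}, $\lr(G)$ is the closure of $\ac^G((G/B)^3)$, so it is by construction a closed convex rational polyhedral cone in $X(T)^3_\QQ$; embedding via the tangent map at the identity and transporting through the Cartan--Killing identification yields that $\tilde\lr(G)$ is a rational polyhedral cone in $(\lt^{*+})^3$. The genuine content of the theorem is therefore the two assertions: (a) $\Gamma(U)$ coincides, on the rational points, with $\tilde\lr(G)$, which forces $\Gamma(U)$ to be closed, convex and polyhedral; and (b) $\Gamma(U)$ is actually closed (not merely the closure of its rational points), so that knowing the rational points determines it.

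For step (a) the key is the standard translation between the existence of a zero of the sum of three coadjoint orbits and the nonvanishing of invariants. Concretely, for $(\lambda,\mu,\nu)\in (X(T)^+)^3$, one has $(V_\lambda\otimes V_\mu\otimes V_\nu)^G\neq 0$ if and only if $0$ lies in the image of the moment map for the $U$-action on $\PP(V_\lambda)\times\PP(V_\mu)\times\PP(V_\nu)$, and the image of this moment map is exactly $U.(\sqrt{-1}\lambda)+U.(\sqrt{-1}\mu)+U.(\sqrt{-1}\nu)$ up to the Cartan--Killing identification and rescaling; this is Mumford's convexity theorem together with the Kempf--Ness theorem identifying $X^{\rm ss}(\Li)\neq\emptyset$ with $0$ being in the moment image. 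I would cite the relevant convexity/Kempf--Ness references (e.g.\ the sources already pointing toward \cite{GITEigen}) rather than reproving them. Applying this for every positive multiple $n(\lambda,\mu,\nu)$ and using homogeneity of both sides, one gets: a rational point $(\xi,\zeta,\eta)\in(\lt^{*+})^3$ lies in $\tilde\Gamma(U)$ if and only if some integral multiple lies in the semigroup generating $\lr(G)$, i.e.\ if and only if it lies in $\tilde\lr(G)$. This establishes that the rational points of $\tilde\Gamma(U)$ are exactly the rational points of the rational polyhedral cone $\tilde\lr(G)$.

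For step (b), the closedness of $\Gamma(U)$, I would argue that $\Gamma(U)$ is the image of the compact set $\{(x,y,z)\in\lu^3 : x+y+z=0,\ \|x\|^2+\|y\|^2+\|z\|^2=1\}$ (intersected with the relevant orbit-type data) under the continuous, proper-on-rays projection sending $x\in\lu$ to the unique point of $\lt^+$ in its $U$-orbit, together with scaling; more simply, $\Gamma(U)$ is evidently stable under $\RR_{\geq 0}$-scaling and is closed because it is the set of $(\xi,\zeta,\eta)$ for which the (continuous, properly parametrized) function measuring the minimal distance from $0$ to $U.(\sqrt{-1}\xi)+U.(\sqrt{-1}\zeta)+U.(\sqrt{-1}\eta)$ vanishes, and this distance function is continuous in $(\xi,\zeta,\eta)$ by compactness of $U$. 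Convexity then follows either from the GIT identification on the dense set of rational points (a closed set agreeing with a convex cone on a dense subset, and being itself scaling-stable and closed, must equal the closure of that convex cone, hence be convex) or directly from Kirwan's convexity theorem. Combining: $\tilde\Gamma(U)$ is closed, contains the dense set of rational points of $\tilde\lr(G)$, and is contained in the closed cone $\tilde\lr(G)$ (since every rational point of $\tilde\Gamma(U)$ lies in $\tilde\lr(G)$ and $\tilde\Gamma(U)$ is the closure of its rational points once we know it is polyhedral), so $\tilde\Gamma(U)=\tilde\lr(G)$ and in particular $\Gamma(U)$ is a closed convex polyhedral cone whose rational points are those of $\tilde\lr(G)$.

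The main obstacle is making the logical order airtight: one wants to conclude polyhedrality of $\Gamma(U)$, but the cleanest route to polyhedrality goes through the identification with $\tilde\lr(G)$, which a priori is only known on rational points. The resolution — and the step I expect to require the most care — is to prove independently (via compactness of $U$ and continuity of the moment-map distance function, i.e.\ Kirwan's theorem) that $\Gamma(U)$ is closed and convex \emph{before} invoking the rational-point comparison; then a closed convex cone agreeing with a rational polyhedral cone on a dense (namely rational) subset must coincide with it, yielding polyhedrality a posteriori. The rest is citation of Mumford convexity, Kempf--Ness, and the already-quoted results relating $\ac^G$, $\lr(G)$ and semistability.
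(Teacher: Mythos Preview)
The paper does not prove Theorem~\ref{th:KN}: it is stated without proof as a known result being recalled (the label ``KN'' points to the classical Kempf--Ness/Kirwan circle of ideas). So there is no ``paper's own proof'' to compare against; your proposal is an attempt to supply an argument where the paper simply cites.

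That said, your sketch is broadly the standard one and is on the right track: identify rational points of $\tilde\Gamma(U)$ with those of $\tilde\lr(G)$ via Kempf--Ness (nonemptiness of $X^{\rm ss}(\Li)$ $\Leftrightarrow$ $0$ lies in the moment image), then upgrade to a full equality of cones. The one place where your logic is not quite airtight is the passage from ``rational points agree'' to ``the cones coincide.'' You correctly flag this, but the fix you propose (closed and convex, agreeing on rational points, hence equal) still needs the rational points to be dense in $\tilde\Gamma(U)$, which is not automatic from closedness and convexity alone. The clean way around this is to invoke directly that the moment cone of a Hamiltonian $U$-action on a projective variety is a rational polyhedral cone (Kirwan convexity plus the fact that the vertices of the moment polytope are images of $T$-fixed points, hence lie in the weight lattice); this gives polyhedrality of $\tilde\Gamma(U)$ outright, after which equality with $\tilde\lr(G)$ follows immediately from the rational-point comparison. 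Alternatively, one can cite directly the results of Mumford--Ness or Brion--Kirwan identifying the moment cone with the ample GIT-cone over the reals, which is essentially what the paper is taking for granted.
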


\section{About the cohomology of $\Gr_\omega(r,2n)$}
\label{sec:cohomSp}

\bigskip
\paragraphe
This section is concerned by coefficient structures of the cohomology of ordinary and isotropic Grassmannians. 
To avoid any confusion, those concerning ordinary and isotropic Grassmanians will be denoted by $c$ and $d$ 
respectively. 
Note that, since ordinary Grassmannian is cominuscule, $c^\kbprod=c$.\\

In Paragraph \ref{par:p}, we defined combinatorially an injective map

\begin{eqnarray}
  \label{eq:SS1}
\begin{diagram}[height=17pt]
  \Schub(\Gr_\omega(r,2n))&\rInto&\Schub(\Fl_{2n}(r,2n-r)).\\
I&\longmapsto&(I,{\bar I}^c)\\
\end{diagram}
\end{eqnarray}

Set $\Schub(\Gr(r,n))=\Part(r,n)$.
 In Paragraph \ref{par:SFl}, we defined combinatorially an injective map 

 \begin{eqnarray}
   \label{eq:SS2}
\begin{diagram}[height=17pt]
  \Schub(\Fl_{2n}(r,2n-r))&\rInto&\Schub(\Gr(r,2n-r))\times
 \Schub(\Gr(2(n-r),2n-r))\times  \Schub(\Gr(r,2r)).\\
(I^1,I^2)&\longmapsto&(I_0,I_1,I_2)\\
\end{diagram}
 \end{eqnarray}

 By composing these two injective maps and then forgetting $I_1$, we obtain an injective  map 
\begin{eqnarray}
   \label{eq:SS3}
\begin{diagram}[height=17pt]
  \Schub(\Gr_\omega(r,2n))&\rInto&\Schub(\Gr(r,2n-r))\times \Schub(\Gr(r,2r)).
  \\
I&\longmapsto&(I_0,I_2)\\
\end{diagram}
\end{eqnarray}

The aim of this section is to prove that this immersion is relevant relatively to
the Belkale-Kumar product. We will also use the following particular
case of the construction in  Paragraph \ref{par:p}:
\begin{eqnarray}
   \label{eq:SS4}
\begin{diagram}[height=17pt]
  \Schub(\Gr_\omega(r,2r))&\rInto&\Schub(\Gr(r,2r)).\\
I&\longmapsto& I=I_2\\
\end{diagram}
\end{eqnarray}

\bigskip
\paragraphe
The following result is due to Belkale-Kumar:

\begin{prop}
\label{prop:nonzeroIG}
Let $I$, $J$, $K\in\Schub(\Gr_\omega(r,2n))$ such that 
$|\Lambda_I|+|\Lambda_J|+|\Lambda_K|=\dim\Gr_\omega(r,2n)$.
Notations $I_0, J_0, K_0, I_2, J_2$ and $K_2$ refer to the map
\eqref{eq:SS3}.
Note also that $I_2,\,J_2$ and $K_2$ belong to  $\Schub(\Gr_\omega(r,2r))$.

The following are equivalent:
\begin{enumerate}
\item $d^\kbprod_{IJK}\neq 0$; 
\item $|\Lambda_{I_0}|+|\Lambda_{J_0}|+|\Lambda_{K_0}|=2r(n-r)$ and
$d_{IJK}\neq 0$;
\item  
$d_{I_2J_2K_2}\neq 0$ and 
$c_{I_0\,J_0\,K_0}\neq 0$.
\end{enumerate}
\end{prop}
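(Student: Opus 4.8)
The plan is to analyze the tangent space $T_{F(r)}\Gr_\omega(r,2n)$ and its decomposition under the Levi $L_r \cong \GL(F(r))\times\Sp(V(r))$, then relate Levi-movability on $\Gr_\omega(r,2n)$ to Levi-movability on the two ordinary Grassmannians appearing in \eqref{eq:SS3}. Recall from Section~\ref{sec:expleSp} that $\Lambda(\Gr_\omega(r,2n))$ splits as the triangular part (roots of $S^2F(r)^*$) and the rectangular part (roots of $\Hom(F(r),V(r))$), and from Proposition~\ref{prop:lambdaISp} that $\Lambda_I$ is the intersection of $\Lambda(\Gr_\omega(r,2n))$ with the symmetric $\Lambda_{(I,\bar I^c)}$. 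Under the maps \eqref{eq:SS2}–\eqref{eq:SS3}, the rectangle $R_2$ (of size $r\times r$ in the flag picture, or rather the relevant $r\times(2r-r)$ block) carries the data of $I_2$, which sits in $\Schub(\Gr_\omega(r,2r))$ and records the triangular part together with its mirror image; the rectangle $R_0$ of size $r\times(2n-2r)$ carries $I_0 \in \Schub(\Gr(r,2n-r))$ and records the $\Hom(F(r),V(r))$ part; the discarded $I_1$ corresponds to $R_1$, which lies inside the $\Sp(V(r))$ directions and plays no role because $\Gr_\omega(r,2n)$ only sees $S^2F(r)^*$ there, not all of the flag variety of $V(r)$.

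The key steps, in order: (1) First I would recall the dimension count: $\dim\Gr_\omega(r,2n) = r(2n-2r) + \binom{r+1}{2}$, which splits as $2r(n-r)$ (the $\Hom$ part) plus $r(2r+1)/2 = \dim\Gr_\omega(r,2r)$ (the $S^2$ part, viewed via \eqref{eq:SS4}). This immediately gives the equivalence of the two halves of condition~(ii) and condition~(iii): the codimensions $|\Lambda_I|$ add up across the two blocks, so $|\Lambda_{I_0}|+|\Lambda_{J_0}|+|\Lambda_{K_0}| = 2r(n-r)$ forces $|\Lambda_{I_2}|+|\Lambda_{J_2}|+|\Lambda_{K_2}| = \dim\Gr_\omega(r,2r)$, and under this "top-degree on each block" condition, the ordinary-Grassmannian coefficient $c_{I_0J_0K_0}$ equals the usual Littlewood–Richardson number, while $d_{I_2J_2K_2}$ is the isotropic one. (2) Next I would prove (i)$\Leftrightarrow$(ii): using Lemma~\ref{lem:fondBK} and the definition of Levi-movability, a triple is Levi-movable on $\Gr_\omega(r,2n)$ iff there are $l_I, l_J, l_K \in L_r$ making the map to $\bigoplus T_P/l_\bullet T_\bullet$ an isomorphism. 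Since $L_r = \GL(F(r))\times\Sp(V(r))$ acts block-diagonally on $T_P = \Hom(F(r),V(r)) \oplus S^2F(r)^*$, the isomorphism condition decouples into a condition on the $\Hom$ block and a condition on the $S^2$ block. The $\Hom$ block with the $\GL(F(r))\times\GL(V(r))$-action is exactly the tangent model for $\Gr(r,2n-2r)$ — hence the condition there is $c_{I_0J_0K_0}\neq 0$ together with the top-degree condition $|\Lambda_{I_0}|+\dots = 2r(n-r)$; the $S^2$ block, with $\GL(F(r))$-action, is the tangent model for $\Gr_\omega(r,2r)$ — giving $d_{I_2J_2K_2}\neq 0$. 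Here I would use the general principle (already implicit in \cite{BK} and in Proposition~2 of \emph{loc. cit.}) that $d^\kbprod_{IJK}\neq 0$ iff each Levi-factor acts with a dense orbit on the relevant product of quotients; separating the product into its $\GL$ and $\Sp$ factors gives (iii). (3) Finally I would record that, since $\Gr(r,2n-2r)$ is cominuscule, $c^\kbprod_{I_0J_0K_0} = c_{I_0J_0K_0}$, so no distinction between $\kbprod$-product and cup product is needed there; the only genuinely "Belkale–Kumar" coefficient surviving is $d_{I_2J_2K_2}$ on the smaller isotropic Grassmannian, exactly as claimed in (iii).

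\textbf{Main obstacle.} The hard part will be step~(2): making precise the claim that Levi-movability on $\Gr_\omega(r,2n)$ splits as the conjunction of two independent Levi-movability conditions on the $\Hom$ and $S^2$ blocks. The subtlety is that the Levi element acting on the $S^2F(r)^*$ block is the \emph{same} $\GL(F(r))$ as the one acting on $F(r)$ inside the $\Hom$ block, so one must check that the $\GL(F(r))$-part of the orbit condition is automatically compatible with whatever $\GL(V(r))$ or $\Sp(V(r))$ is doing; this is where one needs that the relevant Schubert cells $T_I$ are themselves "block-compatible," i.e. $T_I \cap \Hom(F(r),V(r))$ and $T_I \cap S^2F(r)^*$ behave independently under $L_r$, which follows from Proposition~\ref{prop:lambdaISp}(ii) (the defining property that $\Lambda_I$ is a plain intersection with $\Lambda(\Gr_\omega(r,2n))$) but has to be spelled out carefully. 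I would also need to verify that the transversality/density statements transfer correctly under \eqref{eq:SS3}, i.e. that forgetting $I_1$ genuinely loses no information — which is exactly the assertion that $\Sp(V(r))$ restricted to the $S^2F(r)^*$ directions contributes nothing beyond its $\GL(F(r))$-image, since $S^2F(r)^*$ is an $L_r$-module on which $\Sp(V(r))$ acts trivially.
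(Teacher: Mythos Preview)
Your plan correctly handles the easy directions and matches the paper's argument there: the paper also decomposes $T_{P_r}\Gr_\omega(r,2n)$ into its two irreducible $L_r$-summands, observes that the centered tangent space $T_I$ splits accordingly, and deduces (i)$\Rightarrow$(iii) by projecting a single Levi triple onto each block. For (i)$\Leftrightarrow$(ii) the paper simply cites \cite[Proposition~2.4]{RR}, which is the general ``degree on each isotypic component'' criterion you are rederiving.

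The gap is in (iii)$\Rightarrow$(i). You correctly identify one obstacle---the $\GL(F(r))$ factor is shared between the two blocks---and this one can indeed be handled by genericity, since both transversality conditions are open in $\GL(F(r))^3$. But your proposed resolution via Proposition~\ref{prop:lambdaISp}(ii) does not address the real difficulty, and there is a second obstacle you do not name: the hypothesis $c_{I_0J_0K_0}\neq 0$ gives transversality on $\Hom(F(r),V(r))$ using the Levi $\GL(F(r))\times\GL(V(r))$ of the ordinary Grassmannian $\Gr(r,2n-r)$, whereas Levi-movability on $\Gr_\omega(r,2n)$ only allows you elements of $\GL(F(r))\times\Sp(V(r))$. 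Passing from $\GL(V(r))$-transversality to $\Sp(V(r))$-transversality on the $\Hom$ block is not formal and is precisely what the paper calls ``the difficult part of \cite[Theorem~30]{BK:typeBC}''; the paper does not reprove it but cites it. Nothing in Proposition~\ref{prop:lambdaISp}, which is a purely combinatorial statement about root sets, supplies this. (Incidentally, your ``$\Gr(r,2n-2r)$'' should read $\Gr(r,2n-r)$.)
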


\begin{remark}
  The first assertion concerns a structure coefficient of
  $(H^*(\Gr_\omega(r,2n),\kbprod)$, the second one concerns
  $H^*(\Gr_\omega(r,2n)$ and the last one concerns structure coefficients of
  $H^*(\Gr_\omega(r,2r)$
and $H^*(\Gr(r,2n-r)$.
\end{remark}

\begin{proof}
  This is essentially \cite[Theorem~30]{BK:typeBC}. We include a brief discussion for completeness.

The equivalence between the two first assertions is \cite[Proposition~2.4]{RR}.
We use notation of Section~\ref{sec:expleSp} for $\Sp_{2n}$.
Consider the decomposition of $T_{P_r}\Gr_\omega(r,2n)$ as sum of irreducible $L_r$-modules.
The centered tangent space of $\Omega_I(\Gr_\omega(r,2n))$ decomposes as the sum of those 
of $\Omega_{I_0}(\Gr(r,2n-r))$ and those of 
 $\Omega_{I_2}(\Gr_\omega(r,2r))$.
Since $(I,J,K)$ is Levi-movable, one immediately deduces that
$(I_2,J_2,K_2)$ and $(I_0,J_0,K_0)$ are. 
In particular, Lemma~\ref{lem:fondBK} implies that $d_{I_2J_2K_2}\neq 0$ and 
$c_{I_0\,J_0\,K_0}\neq 0$.

The fact that the last assertion implies the second one is the difficult part of 
\cite[Theorem~30]{BK:typeBC}.
\end{proof}

\bigskip\paragraphe
Here comes our main result about cohomology of $\Gr_\omega(r,2n)$; it allows to characterize 
the condition $d^\kbprod_{IJK}=1$ in terms of the Littlewood-Richardson coefficients.

\begin{theo}
\label{th:ppalIG} 
 Let $I$, $J$, $K\in\Schub(\Gr_\omega(r,2n))$ such that 
$|\Lambda_{I}|+|\Lambda_{J}|+|\Lambda_{K}|=\dim\Gr_\omega(r,2n)$.
Notations $I_0, J_0, K_0, I_2, J_2$ and $K_2$ refer to the map
\eqref{eq:SS3}.
The following are equivalent:
\begin{enumerate}
\item $d^\kbprod_{IJK}=1$;\label{ass:IGBK}
\item \label{ass:IG}
$d_{I_2J_2K_2}=1$ and
$c_{I_0\,J_0\,K_0}=1$;
\item  \label{ass:IGLR}
$c_{I_2J_2K_2}=1$ and
$c_{I_0\,J_0\,K_0}=1$.
\end{enumerate}
\end{theo}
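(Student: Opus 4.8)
The plan is to prove the chain of implications \eqref{ass:IGBK} $\Rightarrow$ \eqref{ass:IG} $\Rightarrow$ \eqref{ass:IGLR} $\Rightarrow$ \eqref{ass:IGBK}, using Proposition~\ref{prop:nonzeroIG} to handle nonvanishing for free and concentrating the work on the ``equals one'' refinement. The key observation is that the two factors $d_{I_2J_2K_2}$ and $c_{I_0J_0K_0}$ attached to $I$ (and similarly $J$, $K$) decouple the isotropic structure coefficient into a ``triangular part'' living on $\Gr_\omega(r,2r)$ and a ``rectangular part'' living on $\Gr(r,2n-r)$, exactly as in the tangent-space decomposition recalled in the proof of Proposition~\ref{prop:nonzeroIG}.

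First I would treat \eqref{ass:IGLR} $\Leftrightarrow$ \eqref{ass:IG}, i.e. that for $I_2,J_2,K_2\in\Schub(\Gr_\omega(r,2r))$ one has $d_{I_2J_2K_2}=1$ if and only if $c_{I_2J_2K_2}=1$. Here $\Gr_\omega(r,2r)$ is the maximal isotropic Grassmannian, which is cominuscule; hence by the remark closing Section~\ref{sec:levimov} the Belkale--Kumar product coincides with the ordinary cup product, so $d^\kbprod=d$. Via the inclusion \eqref{eq:SS4}, $\Lambda_{I_2}$ sits inside the full triangular region of Figure~\ref{fig:GammaSp} as a subdiagram of the corresponding ordinary Schubert class on $\Gr(r,2r)$; the statement $d_{I_2J_2K_2}=1\iff c_{I_2J_2K_2}=1$ is then a known comparison between minuscule/cominuscule intersection numbers and ordinary Littlewood--Richardson numbers (this is essentially the content used in \cite{BK:typeBC}), which I would cite or re-derive from the fact that for cominuscule $G/P$ the cup product structure constants are a specialization of LR-coefficients. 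Granting this, \eqref{ass:IG} and \eqref{ass:IGLR} are literally equivalent.

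Next, the core step is \eqref{ass:IGBK} $\Leftrightarrow$ \eqref{ass:IG}. For the forward direction, assume $d^\kbprod_{IJK}=1$. By Proposition~\ref{prop:nonzeroIG}, $d_{I_2J_2K_2}\neq 0$ and $c_{I_0J_0K_0}\neq 0$, and the implication \eqref{ass:IGBK of that proposition}$\Rightarrow$\eqref{ass:IGLR of that proposition} also gives $|\Lambda_{I_0}|+|\Lambda_{J_0}|+|\Lambda_{K_0}|=2r(n-r)$, so both factors are honest point-class coefficients on their respective Grassmannians. Since $H^*(\Gr_\omega(r,2r),\kbprod)$ is the ordinary cohomology and the rectangular coefficients are ordinary LR-numbers, I would argue via Lemma~\ref{lem:fondBK} and the product decomposition of the (centered) tangent space that the Levi-movable triples realizing $d^\kbprod_{IJK}$ are in bijection, under the group decomposition $L_r\cong\GL(F(r))\times\Sp(V(r))$, with pairs consisting of a Levi-movable triple for $(I_2,J_2,K_2)$ and one for $(I_0,J_0,K_0)$; hence the count factors as $d^\kbprod_{IJK}=d_{I_2J_2K_2}\cdot c_{I_0J_0K_0}$ in the boundary situation, so $d^\kbprod_{IJK}=1$ forces each factor to be $1$. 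For the converse, if $d_{I_2J_2K_2}=1$ and $c_{I_0J_0K_0}=1$, the same factorization gives $d^\kbprod_{IJK}=1$ once we know $d^\kbprod_{IJK}\neq 0$, which is exactly the hard implication \eqref{ass:IGLR}$\Rightarrow$\eqref{ass:IGBK} of Proposition~\ref{prop:nonzeroIG} (the difficult part of \cite[Theorem~30]{BK:typeBC}).

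The main obstacle I expect is making the multiplicativity $d^\kbprod_{IJK}=d_{I_2J_2K_2}\cdot c_{I_0J_0K_0}$ rigorous at the level of counts, not just of nonvanishing. Proposition~\ref{prop:nonzeroIG} as stated only records equivalences of the ``$\neq 0$'' conditions, so I cannot simply quote it; I must upgrade the argument. The clean way is to invoke the characterization of Levi-movability from \cite{RR} together with the observation that, for a triple on the boundary (the codimension condition $|\Lambda_I|+|\Lambda_J|+|\Lambda_K|=\dim\Gr_\omega(r,2n)$ plus the split codimension condition), the relevant intersection in $T_{P_r}\Gr_\omega(r,2n)$ is transverse to the $L_r$-action and splits along $T_{P_r}\Gr_\omega(r,2n)=\Hom(F(r),V(r))\oplus S^2F(r)^*$ as an $L_r$-variety; the number of $L_r$-orbit configurations making the Belkale--Kumar map an isomorphism is then the product of the two independent counts on the factors. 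Combined with Theorem~\ref{th:KTsat}-style interpretations, or more directly with the interpretation of $d^\kbprod_{IJK}=1$ as $\Face(\alpha_r,w_I,w_J,w_K)$ meeting the interior of the dominant chamber (Corollary~\ref{cor:face1}), this product formula becomes a statement about the eigencone of $\Sp_{2n}$ restricting to those of $\Sp_{2r}$ and $\GL_r$, which is the natural place to close the loop.
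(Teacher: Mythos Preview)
Your central step is the claimed multiplicativity
\[
d^{\kbprod}_{IJK}=d_{I_2J_2K_2}\cdot c_{I_0J_0K_0},
\]
and this is false. The paper itself records a counterexample immediately after the theorem: for $r=3$, $n=5$ and $I=J=K=\{3,7,10\}$ one has $d^{\kbprod}_{IJK}=2$ while $d_{I_2J_2K_2}=2$ and $c_{I_0J_0K_0}=2$, so the product is $4\neq 2$. Your heuristic that the Levi-movable configurations in $T_{P_r}\Gr_\omega(r,2n)$ split bijectively along the $L_r$-module decomposition $\Hom(F(r),V(r))\oplus S^2F(r)^*$ is exactly what fails: the $\GL(F(r))$ factor acts on both summands simultaneously, so the two counts are not independent. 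This breaks both directions of your \ref{ass:IGBK}$\Leftrightarrow$\ref{ass:IG} argument.

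The paper avoids any such product formula entirely. The equivalence is proved face-theoretically via Corollary~\ref{cor:face1}: $d^{\kbprod}_{IJK}=1$ is characterized by the face $\Face(r,I,J,K)\subset\lr(\Sp_{2n})$ meeting the open dominant chamber, and similarly for the two smaller coefficients. For \ref{ass:IGBK}$\Rightarrow$\ref{ass:IG} one takes a regular point on $\Face(r,I,J,K)$, realizes it by Hermitian matrices via Theorem~\ref{th:KN}, and then reads off regular points on the faces $\Face_2\subset\lr(\Sp_{2r})$ and $\Face_0\subset\lr(\GL_{2n-r})$ from the block decomposition of those matrices. For \ref{ass:IGLR}$\Rightarrow$\ref{ass:IGBK} one passes through $\Fl_{2n}(r,2n-r)$, uses Richmond's recursion to get a Belkale--Kumar coefficient equal to $1$ there, obtains a codimension-two face of $\lr(\SL_{2n})$ with regular points, and descends to $\lr(\Sp_{2n})$ using the $\theta$-invariance and \cite[Theorem~1]{BK:typeBC}. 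Finally, your treatment of \ref{ass:IG}$\Leftrightarrow$\ref{ass:IGLR} as ``known'' is also too quick: the paper proves \ref{ass:IGLR}$\Rightarrow$\ref{ass:IG} by a geometric argument specific to the Lagrangian case, namely that if the intersection of three $\Sp_{2n}$-translated Schubert varieties in $\Gr(n,2n)$ is a single transverse point $F$ (Sottile's theorem), then $F^{\perp_\omega}$ lies in the same intersection, forcing $F=F^{\perp_\omega}\in\Gr_\omega(n,2n)$.
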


\begin{proof}
We first prove that assertion~\ref{ass:IGBK} implies assertion~\ref{ass:IG}.
Proposition~\ref{prop:nonzeroIG} implies that $d_{I_2J_2K_2}\neq 0$ and 
$c_{I_0\,J_0\,K_0}\neq 0$.
Now, by Corollary~\ref{cor:face1}, it is sufficient to prove that the two faces $\Face_2$
and $\Face_0$ of  $\lr(\Sp_{2r})$ and $\lr(\GL_{2n-r})$ corresponding 
to these coefficients intersect the strictly dominant chambers.
We are going to prove this by constructing explicitly matrices whose
the corresponding spectrum yield points of $\Face_2$
and $\Face_0$. The starting point is that the assumption
$d^\kbprod_{IJK}=1$ yields matrices whose spectrum have certain properties.

We first make more explicit the description of the face $\Face(r,I,J,K)$ 
of $\lr(\Sp_{2n})$ associated to  $d^\kbprod_{IJK}=1$  as in
Theorem~\ref{th:eigen1}.
By Theorem \ref{th:eigen1}, there exists a point $(\lambda,\mu,\nu)$
  in  $\Face(r,I,J,K)$ such that $\lambda$, $\mu$ and $\nu$ are
  strictly dominant.
Let us use notation of Section~\ref{sec:expleSp} for the data associated to the group $\Sp_{2n}$.
We write $\lambda=\sum\lambda_i\varepsilon_i\in X(T)$, we
recall that $\overline{i}=2n+1-i$ and we set $\lambda_{\overline
  i}=-\lambda_i$ for $i\in [1,n]$.
We use similar notation for $\mu$ and $\nu$. 
A direct computation shows the linear equation of $\Face(r,I,J,K)$ is
\begin{eqnarray}
  \label{eq:phiI}
  \sum_{i\in I}\lambda_i+\sum_{j\in J}\mu_j+\sum_{k\in K}\nu_k=0.
\end{eqnarray}

Let us consider the isomorphism $\rho_{IJK}$ of $X(T)^3$ defined in
Paragraph \ref{par:defrho}.
By Theorem \ref{th:facedom},  $\rho_{IJK}(\Face(r,I,J,K))\subset \lr(L_r)$. 
Using Theorem \ref{th:KN}, we now identify $\lr(L_r)$ with
$\Gamma(L_r\cap U_{2n}(\CC))$.
We now want to describe a point in $\Gamma(L_r\cap U_{2n}(\CC))$
corresponding to $\rho_{IJK}(\lambda,\mu,\nu)$.

The elements of $Lie(L_r)$  have the following form:
\begin{eqnarray}
  \label{eq:matLr}
A=\left(
  \begin{array}{c|c|c}        
    A_1&0&0\\
\hline
0&A_2&0\\
\hline
0&0&-(J_r\,{}^tA_1\, J_r)
  \end{array}
\right),
\end{eqnarray}
where $A_1\in Lie(\GL_{r})$ and $A_2\in Lie(\Sp_{2(n-r)})$ and $J_r$ is defined by
formula \eqref{eq:defJn} (with $r$ in place of $n$).
Let $\lt^+_r$ be the dominant (relatively to  $B\cap L_r$) chamber of the Cartan subalgebra $\lt$
of $L_r$. 
Note that $\lt^+_r$ is the set of diagonal real matrices
$\diag(\alpha_1,\cdots,\alpha_n,-\alpha_n,\cdots,-\alpha_1)$ such that 
$\alpha_1\geq\cdots\geq \alpha_r$ and $\alpha_{r+1}\geq\cdots\geq \alpha_n$.
 Recall that we have an homeomorphism 
$\pi\,:\, \lt^+_r \longto Lie(L_r\cap U_{2n}(\CC))/ (L_r\cap U_{2n}(\CC))$.
Let $A\in Lie(L_r\cap U_{2n})$ as in formula \eqref{eq:matLr}.
By juxtaposition of the spectrums of $\sqrt{-1}A_1,\,\sqrt{-1}A_2$ and
$-\sqrt{-1} (J_r\,{}^tA_1\, J_r)$ 
(each one in non-increasing order), we obtain a point $\xi(A)$ in
$\lt^+_r\subset\RR^{2n}$. Note that $\pi(\xi(A))=(L_r\cap U_{2n}(\CC)).A$.

Fix $A,\,B$ and $C$ in $Lie(L_r\cap U_{2n})$ such that 
$$
  A+B+C=0,
$$
and 
$(\xi(A),\xi(B),\xi(C))$ is the point of $\lr(L_r)$ corresponding  to 
$\rho_{IJK}(\lambda,\mu,\nu)$.
The matrices $A$, $B$ and $C$ are as in formula \eqref{eq:matLr} for
some 
;
$A_1,B_1,C_1\in u_r(\CC)$ and $A_2,B_2,C_2\in
u_{2(n-r)}(\CC)\cap{\rm Lie}(\Sp_{2(n-r)})$.

\bigskip
Consider now the three following matrices of $\Sp_{2r}\cap U_{2r}$:
$$
\bar A=\left(
  \begin{array}{c|c}
    A_1&0\\
\hline
0&-(J_r\,{}^tA_1\, J_r)
  \end{array}
\right),\,
\bar B=\left(
  \begin{array}{c|c}
    B_1&0\\
\hline
0&-(J_r\,{}^tB_1\, J_r)
  \end{array}
\right),$$

and,
$$
\bar C=\left(
  \begin{array}{c|c}
    C_1&0\\
\hline
0&-(J_r\,{}^tC_1\, J_r)
  \end{array}
\right).
$$
Obviously, $\bar A+\bar B+\bar C=0$ and the spectrum of these matrices
yield a point of $\Gamma(\Sp_{2r}\cap U_{2r})$.
We claim that the corresponding point (by Theorem \ref{th:KN}) is
regular and belongs to $\Face_2$.

Let  $\alpha=(\alpha_1,\cdots,\alpha_n,-\alpha_n,\cdots,-\alpha_1)$ be the
spectrum of $\sqrt{-1}A$; it satisfies  
$\alpha_1>\cdots> \alpha_r$ and $\alpha_{r+1}>\cdots> \alpha_n$.
Recall that $w_{I}\in S_{2n}$.
Moreover, $w_I\alpha$ corresponds to $\lambda$ and so is strictly
dominant.
Consider now, $w_{I_2}\in S_{2r}$.
Since $\lambda$ is dominant, so is its restriction $\bar\lambda$ to $T\cap Sp(2r)$.
So, the coordinates of 
$w_{I_2}(\alpha_1,\cdots,\alpha_r,-\alpha_r,\cdots,\-\alpha_1)$ form a
decreasing sequence.
This implies that
$$
\sum_{i\in I_2}(\bar\lambda)_i=\sum_{i=1}^r\alpha_i=tr(A_1).
$$
Similarly, we have
$$
\sum_{i\in J_2}(\bar\mu)_i=tr(B_1) {\rm\ and\ }
\sum_{i\in K_2}(\bar\nu)_i=tr(C_1),
$$
with obvious notation.
Now, the relation $\bar A+\bar B+\bar C=0$ implies that 
$$
\sum_{i\in I_2}(\bar\lambda)_i+\sum_{i\in J_2}(\bar\mu)_i+\sum_{i\in K_2}(\bar\nu)_i=0.
$$
So, $(\bar\lambda,\bar\mu,\bar\nu)$ is a regular point in $\Face_2$
and Corollary \ref{cor:face1} implies that $d_{I_2J_2K_2}=1$.

In a similar way,
$$
\left(
  \begin{array}{c|c}
    A_1&0\\
\hline
0&A_2
  \end{array}
\right)+\left(
  \begin{array}{c|c}
    B_1&0\\
\hline
0&B_2
  \end{array}
\right)+\left(
  \begin{array}{c|c}
    C_1&0\\
\hline
0&C_2
  \end{array}
\right)=0,
$$
provides a regular point in $\Face_0$.
So, Corollary \ref{cor:face1} implies that $c_{I_0J_0K_0}=1$.\\

We now prove that  assertion~\ref{ass:IG} implies assertion~\ref{ass:IGLR}.
This implication is only concerned about $\Gr(r,2r)$ and $\Gr_\omega(r,2r)$: we may assume that $r=n$.
 Let us assume that $d_{IJK}=d_{IJK}^\kbprod=1$.
By \cite[Corollary~11]{BK:typeBC}, the following product in ${\rm H}^*(\Gr(n,2n))$ is nonzero:
$$
\sigma_I(\Gr(n,2n)).\sigma_{J}(\Gr(n,2n)).\sigma_{K}(\Gr(n,2n))\neq 0.
$$
Now, by Corollary~\ref{cor:face1} it is sufficient to prove that the face
$\Face^A$ of $\lr(\SL_{2n})$ corresponding to $(I,J,K)$ contains regular points.
Let $\Face^C$ be the  face of $\lr(\Sp_{2n})$ corresponding to $d_{IJK}=1$.
By Theorems~\ref{th:eigen1} and \ref{th:KN}, there exist $A,B,C\in u_n(\CC)$ such that
$$
\left(
  \begin{array}{c|c}
    A&0\\
\hline
0&-J_n{}^tAJ_n
  \end{array}
\right)+\left(
  \begin{array}{c|c}
    B&0\\
\hline
0&-J_n{}^tBJ_n
  \end{array}
\right)+\left(
  \begin{array}{c|c}
    C&0\\
\hline
0&-J_n{}^tCJ_n
  \end{array}
\right)=0,
$$
and the spectrum $(\alpha,\beta,\gamma)$ of these three matrices give a regular point 
in $\Face^C$. Since
$$
\tr(A)+\tr(B)+\tr(C)=\sum_{I}\alpha_i+\sum_{J}\beta_i+\sum_{K}\gamma_i=0,
$$
we just obtained a regular point in $\Face^A$.\\
 
Still assuming that $r=n$, we now want to prove that
assertion~\ref{ass:IGLR} implies assertion~\ref{ass:IG}.
Consider the inclusion of $\Gr_\omega(n,2n)$ in $\Gr(n,2n)$.
Let $\Omega_I(\Gr(n,2n))$, $\Omega_{J}(\Gr(n,2n))$ and $\Omega_{K}(\Gr(n,2n))$ be the three Schubert
varieties of $\Gr(n,2n)$ corresponding to $I$, $J$ and $K$ and the standard flag in the basis of 
Paragraph~\ref{par:rsSp}.
Since $c_{IJK}=1$, \cite[Theorem~2]{Sottile:transv} implies that for general $g$, $g'$ and $g''$ in 
$\Sp_{2n}$  the intersection 
$g\Omega_I(\Gr(n,2n))\cap g'\Omega_{J}(\Gr(n,2n))\cap g''\Omega_{K}(\Gr(n,2n))$ is transverse and reduced 
to  one point $F$.
Let us consider the orthogonal $F^{\perp_\omega}$ of $F$ for $\omega$.
Since $g\in \Sp_{2n}$,  $F^{\perp_\omega}$ belongs to $g\Omega_I(\Gr(n,2n))$; and finally to
the intersection. We deduce that $F=F^{\perp_\omega}$ belongs to $\Gr_\omega(n,2n)$.
So, the intersection 
$g\Omega_I(\Gr_\omega(n,2n))\cap g'\Omega_{J}(\Gr_\omega(n,2n))\cap g''\Omega_{K}(\Gr_\omega(n,2n))$ 
is reduced  to one point $F$ for general  $g$, $g'$ and $g''$ in 
$\Sp_{2n}$. 
We deduce that $d_{IJK}=1$.\\

It remains to prove that assertion~\ref{ass:IGLR} implies assertion~\ref{ass:IGBK}.
By the preceding argue, assertion~\ref{ass:IG} holds. 
Since $\Gr_\omega(r,2r)$ is cominuscule, we may assume that $r<n$.
Now, Proposition~\ref{prop:nonzeroIG} implies that $d_{IJK}\neq 0$. 
It remains to prove that the corresponding face $\Face(r,I,J,K)$ of $\lr(\Sp_{2n})$ contains regular
points.
Let us consider the three Schubert classes $\sigma_{(I,{\bar I}^c)} (\Fl_{2n}(r,2n-r))$, $\sigma_{(J,{\bar J}^c)}(\Fl_{2n}(r,2n-r))$ and 
$\sigma_{(K,{\bar K}^c)}(\Fl_{2n}(r,2n-r))$ of $H^*(\Fl_{2n}(r,2n-r))$.
Since $c_{I_2J_2K_2}\neq 0$ and $c_{I_0\,J_0\,K_0}\neq 0$, the triple
$((I,{\bar I}^c),(J,{\bar J}^c),(K,{\bar K}^c))\in \Schub(\Fl_{2n}(r,2n-r)) $ is Levi-movable.
Let $d$ be the  positive integer such that
$$\sigma_{(I,{\bar I}^c) }(\Fl_{2n}(r,2n-r))\kbprod\sigma_{(J,{\bar J}^c)}(\Fl_{2n}(r,2n-r))
\kbprod\sigma_{(K,{\bar K}^c)}(\Fl_{2n}(r,2n-r))=d[{\rm pt}].$$

By \cite{Richmond:recursion} (see also \cite{Rich:mult} or \cite{multi}), $d$ is the product of 
$c_{I_0\,J_0\,K_0}$ and another Littlewood-Richardson coefficient $c$.
The fact that $c_{I_0\,J_0\,K_0}=1$ allows to apply Theorem~\ref{th:prod} to $c$:
$c=c_{I_2J_2K_2}.c_{I_0\,J_0\,K_0}$. We deduce that $d=1$.

By \cite{GITEigen2}, by saturating the two inequalities $\varphi_{IJK}$ and 
$\varphi_{\overline{I}^c\,\overline{J}^c\,\overline{K}^c}$, one obtains a face $\Face$ of $\lr(\SL_{2n})$ 
intersecting the strictly dominant chamber and of codimension two.

Let $T^A$ be the diagonal maximal torus of $\SL_{2n}$.
Let $\theta$ be the $\ZZ$-linear involution of $X(T^A)$ mapping $\varepsilon_i$ on $-\varepsilon_{2n+1-i}$,
with notation of Paragraph~\ref{par:defrootSL}.
Since $\theta$ corresponds to duality for representations, $\lr(\SL_{2n})$ is stable by 
the automorphism $(\theta,\theta,\theta)$ of $X(T^A)^3\otimes \QQ$.
Note that the character group of the maximal torus of $\Sp_{2n}$ defined in Paragraph~\ref{par:rsSp}
identifies by restriction with the set of $\theta$-fixed points in $X(T^A)$.
Moreover,  by \cite[Theorem~1]{BK:typeBC},  $\lr(\Sp_{2n})$ is precisely the set of  points in $\lr(\Sl_{2n})$
fixed by $(\theta,\theta,\theta)$.

Since $\varphi_{IJK}\circ(\theta,\theta,\theta)=\varphi_{\overline{I}^c\,\overline{J}^c\,\overline{K}^c}$, $\Face$ is stable 
by $(\theta,\theta,\theta)$. By convexity $\Face$ contains regular $\theta$-fixed points.
We deduce using \cite[Theorem~1]{BK:typeBC}, that $\Face(r,I,J,K)$ contains regular points.
\end{proof}

\subsection{Examples}

We now give some examples performed with the Anders~Buch's quantum calculator \cite{qcalc}.\\

\paragraphe
Several multiplicative formulas for structure constants of $\kbprod$ are known 
(see~\cite{Rich:mult,Richmond:recursion,multi,KnPu:product}).
The formula $d^\kbprod_{IJK}=d_{I_2J_2K_2}.c_{I_0\,J_0\,K_0}$ could explain
 Theorem~\ref{th:ppalIG}. Unfortunately, this last formula is not satisfied:

if $r=3$, $n=5$ and $I=J=K=\{3,\,7,\,10\}$ then $d^\kbprod_{IJK}=2$,
$d_{I_2J_2K_2}=2$ and
$c_{I_0\,J_0\,K_0}=2$.\\

\paragraphe
We now consider  $\IG(n,2n)$ and observe relations between 
$d_{IJK}$  and $c_{IJK}$ for $I,J,K\in\Sch(\IG(n,2n))\subset \Part(n,2n)=\Sch(\Gr(n,2n))$.
Since $\IG(n,2n)$ and $\Gr(n,2n)$ are cominuscule, the Belkale-Kumar product and the ordinary
one coincide here.
Let $\delta_I$ denote the number of diagonal elements in $\Lambda_I(\IG(n,2n))$.
Theorem~\ref{th:ppalIG} shows that
$$
d_{IJK}=1 \iff c_{IJK}=1.
$$
Assume that $d_{IJK}=1$. 
The fact that $c_{IJK}$ is nonzero implies that the sum of the codimensions of the three
corresponding Schubert varieties of $\Gr(n,2n)$ equals the dimension of $\Gr(n,2n)$.
One can easily check that this means that $\delta_I+\delta_J+\delta_K=n$.
The following example shows that this is not true if $d_{IJK}$ is only assumed to be nonzero:

Set $n=4$,  $I=\{1,\,2,\,4,\,6\}$ and $J=K=\{4,\,6,\,7,\,8\}$. 
Then $d_{IJK}=2$ and $\delta_I+\delta_J+\delta_K=3+1+1=5$.
In particular, $c_{IJK}=0$.

\bigskip\paragraphe
For $I,J,K$ in $\Sch(\IG(n,2n))$ such that $c_{IJK}=1$, we obviously have 
$\delta_I+\delta_J+\delta_K=n$. The following example shows that this is not true if
$c_{IJK}$ is only assumed to be nonzero.

Set $n=4$,  $I=J=\{2,\,4,\,6,\,8\}$ and $K=\{3,\,4,\,7,\,8\}$.  
Then $c_{IJK}=2$ and $\delta_I+\delta_J+\delta_K=6$. 
In particular, $d_{IJK}=0.$

\bigskip\paragraphe
We now assume that $\delta_I+\delta_J+\delta_K=n$ and 
$|\Lambda_I(\IG(n,2n))|+|\Lambda_J(\IG(n,2n))|+|\Lambda_K(\IG(n,2n))|=\frac{n(n+1)}{2}$.
The Belkale-Kumar-Sottile theorem (see~\cite[Theorem~2]{Sottile:transv}) implies that 
$$
\begin{array}{l}
  c_{IJK}\geq d_{IJK}{\rm\ and}\\
  c_{IJK}-d_{IJK} {\rm\ is\ even}.
\end{array}
$$
We already noticed that $c_{IJK}$ and $d_{IJK}$ can be different for dimension reasons. 
The following example shows that they can be different for other reasons.

Set $n=5$, $I=J=\{2,\,4,\,6,\,8,\,10\}$ and $K=\{3,\,6,\,7,\,9,\,10\}$.  Then
$$
d_{IJK}=4 {\rm\ \ and\ \ }c_{IJK}=6.
$$ 

\section{About the cohomology of $\Gr_Q(r,2n+1)$}
\label{sec:cohomSO}

This section is concerned by coefficient structures of the cohomology of ordinary 
and orthogonal Grassmanians. 
To avoid any confusion, those concerning ordinary and isotropic Grassmanians 
will be denoted with $c$ and $e$ respectively. 

With notation of Section \ref{sec:SO}, consider the injective maps

\begin{eqnarray}
  \label{eq:SS6}
\begin{diagram}[height=17pt]
  \Schub(\Gr_Q(r,2n+1))&\rInto&\Schub(\Fl_{2n+1}(r,2n+1-r)).\\
I&\longmapsto&(I,{\bar I}^c),\\
\end{diagram}
\end{eqnarray}

and

 \begin{eqnarray}
   \label{eq:SS7}
\begin{diagram}[height=17pt]
  \Schub(\Fl_{2n+1}(r,2n+1-r))&\rInto&\Schub(\Gr(r,2n+1-r))\times
 \Schub(\Gr(2(n-r)+1,2n+1-r))\times  \Schub(\Gr(r,2r)).\\
(I^1,I^2)&\longmapsto&(I_0,I_1,I_2)\\
\end{diagram}
 \end{eqnarray}

 By composing these two injective maps and then forgetting $I_1$, we obtain an injective  map 
\begin{eqnarray}
   \label{eq:SS8}
\begin{diagram}[height=17pt]
  \Schub(\Gr_Q(r,2n+1))&\rInto&\Schub(\Gr(r,2n+1-r))\times \Schub(\Gr(r,2r)).
  \\
I&\longmapsto&(I_0,I_2)\\
\end{diagram}
\end{eqnarray}

The aim of this section is to prove that this immersion is relevant relatively to
the Belkale-Kumar product. We will also use the following particular
case of the construction in  Paragraph \ref{par:p}:
\begin{eqnarray}
   \label{eq:SS9}
\begin{diagram}[height=17pt]
  \Schub(\Gr_\omega(r,2r))&\rInto&\Schub(\Gr(r,2r)).\\
I&\longmapsto& I=I_2\\
\end{diagram}
\end{eqnarray}

\bigskip\paragraphe
The following is \cite[Theorem~41]{BK:typeBC} :

\begin{prop}
\label{prop:nonzeroOG}
Let $I$, $J$, $K\in\Schub(\Gr_Q(r,2n+1))$ such that 
$|\Lambda_{I}|+|\Lambda_{J}|+|\Lambda_{K}|=\dim\Gr_Q(r,2n+1)$.
The following are equivalent:
\begin{enumerate}
\item $e^\kbprod_{IJK}\neq 0$;
\item $|\Lambda_{I_0}|+|\Lambda_{J_0}|+|\Lambda_{K_0}|=r(2n+1-2r)$ and
$e_{IJK}\neq 0$;
\item $e_{I_2J_2K_2}\neq 0$ and 
$c_{I_0\,J_0\,K_0}\neq 0$.
\end{enumerate}
\end{prop}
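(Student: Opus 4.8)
The plan is to transpose, essentially verbatim, the proof of Proposition~\ref{prop:nonzeroIG} from type $C$ to type $B$: the statement is \cite[Theorem~41]{BK:typeBC}, so I would include only the short discussion needed to see how the three assertions line up with the results quoted earlier. The structural input is the decomposition, as a module over the Levi subgroup $L_r\simeq S(\GL(F(r))\times{\rm O}(V(r)))$, of the tangent space
\[
T_{F(r)}\Gr_Q(r,2n+1)=\Hom(F(r),V(r))\oplus\textstyle\bigwedge^2F(r)^*
\]
into its two irreducible summands: the first of dimension $r(2n+1-2r)$, carrying the classes $\Lambda_{I_0}$, and the second (the tangent space at the base point of the maximal isotropic Grassmannian whose Schubert classes are indexed by $\Schub(\Gr_\omega(r,2r))$ as in \eqref{eq:SS9}) carrying the classes $\Lambda_{I_2}$. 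Under the correspondence of Section~\ref{sec:paramschubert}, for each of $I,J,K$ the centered tangent space of the associated Schubert variety of $\Gr_Q(r,2n+1)$ decomposes accordingly as the direct sum of the centered tangent space of $\Omega_{I_0}(\Gr(r,2n+1-r))$ and of that of $\Omega_{I_2}$ in the maximal isotropic Grassmannian.

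For the equivalence of (i) and (ii) I would invoke \cite[Proposition~2.4]{RR}: $e^\kbprod_{IJK}\neq 0$ if and only if $e_{IJK}\neq 0$ and, for every irreducible $L_r$-summand of $T_{F(r)}\Gr_Q(r,2n+1)$, the codimensions of the three centered tangent spaces, intersected with that summand, sum to its dimension. Since there are exactly two summands and the total dimension hypothesis $|\Lambda_I|+|\Lambda_J|+|\Lambda_K|=\dim\Gr_Q(r,2n+1)$ is in force, the condition on the $\bigwedge^2F(r)^*$-summand is automatic once the one on $\Hom(F(r),V(r))$ holds, and the latter reads exactly $|\Lambda_{I_0}|+|\Lambda_{J_0}|+|\Lambda_{K_0}|=r(2n+1-2r)$. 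This gives (i)$\Leftrightarrow$(ii).

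For (ii)$\Rightarrow$(iii) I would argue as in Proposition~\ref{prop:nonzeroIG}: by Lemma~\ref{lem:fondBK} and Levi-movability there is a single element $\ell\in L_r$ making the three translated centered tangent spaces meet transversally in $T_{F(r)}\Gr_Q(r,2n+1)$; restricting this element (and its triple intersection) to each of the two $L_r$-summands and applying Lemma~\ref{lem:fondBK} again in each factor yields $c_{I_0\,J_0\,K_0}\neq 0$ in $H^*(\Gr(r,2n+1-r))$ and $e_{I_2J_2K_2}\neq 0$ in the cohomology of the maximal isotropic Grassmannian.

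The main obstacle is the reverse implication (iii)$\Rightarrow$(ii), which is the substantive content of \cite[Theorem~41]{BK:typeBC}. Here one is handed triples of Schubert classes whose products are a nonzero multiple of the point class in each of the two factor Grassmannians, and one must manufacture a single element of $L_r$ that simultaneously makes the mixed term $\Hom(F(r),V(r))$ and the quadratic term $\bigwedge^2F(r)^*$ transverse; a naive block-diagonal choice does not achieve this, and the argument in \cite{BK:typeBC} proceeds by an explicit analysis of the $L_r$-action together with a degeneration/induction. I would either reproduce that argument in the odd-orthogonal setting or, since the statement is word-for-word \cite[Theorem~41]{BK:typeBC}, simply cite it and record the two easy implications above for completeness --- exactly the level of detail used for Proposition~\ref{prop:nonzeroIG}.
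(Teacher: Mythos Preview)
The paper gives no proof for this proposition at all: it is simply introduced by the sentence ``The following is \cite[Theorem~41]{BK:typeBC}'' and left at that. Your proposal does strictly more, transposing to type $B$ the short discussion the paper includes for the type $C$ analogue (Proposition~\ref{prop:nonzeroIG}); the structure you outline --- (i)$\Leftrightarrow$(ii) via \cite[Proposition~2.4]{RR}, the easy direction (i)$\Rightarrow$(iii) by restricting Levi-movability to each $L_r$-summand, and the hard direction (iii)$\Rightarrow$(ii) deferred to \cite{BK:typeBC} --- is exactly the template used there. One small slip: in your (ii)$\Rightarrow$(iii) paragraph you speak of ``a single element $\ell\in L_r$'', whereas Lemma~\ref{lem:fondBK} and the definition of Levi-movability involve a triple $(l_u,l_v,l_w)$; this does not affect the argument.
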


\bigskip\paragraphe
Here comes our main result about cohomology of $\Gr_Q(r,2n+1)$; it allows to characterize 
the condition $e^\kbprod_{IJK}=1$ in terms of the Littlewood-Richardson coefficients.

\begin{theo}
\label{th:ppalSO} 
 Let $I$, $J$, $K\in\Schub(\Gr_Q(r,2n+1))$ such that 
$|\Lambda_{I}|+|\Lambda_{J}|+|\Lambda_{K}|=\dim\Gr_Q(r,2n+1)$.
The following are equivalent:
\begin{enumerate}
\item $e^\kbprod_{IJK}=1$;\label{ass:OGBK}
\item \label{ass:OG}
$e_{I_2J_2K_2}=1$ and
$c_{I_0\,J_0\,K_0}=1$;
\item  \label{ass:OGLR}
$c_{I_2p'_2K_2}=1$ and
$c_{I_0\,J_0\,K_0}=1$.
\end{enumerate}
\end{theo}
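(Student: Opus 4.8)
The plan is to transpose, essentially line by line, the strategy used for Theorem~\ref{th:ppalIG}: replace $\Sp_{2n}$ by $\SO_{2n+1}$, the maximal parabolic $P_r$ by the one whose Levi is $L_r\cong S(\GL(F(r))\times\mathrm O(V(r)))$ with $\dim V(r)=2(n-r)+1$, the summand $S^2F(r)^*$ of the tangent space by the summand $\bigwedge^2F(r)^*$ of $T_{F(r)}\Gr_Q(r,2n+1)$, and Proposition~\ref{prop:nonzeroIG} by Proposition~\ref{prop:nonzeroOG}. I would prove the three assertions equivalent through the cycle $(\ref{ass:OGBK})\Rightarrow(\ref{ass:OG})$, then $(\ref{ass:OG})\Leftrightarrow(\ref{ass:OGLR})$, then $(\ref{ass:OGLR})\Rightarrow(\ref{ass:OGBK})$.

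For $(\ref{ass:OGBK})\Rightarrow(\ref{ass:OG})$: Proposition~\ref{prop:nonzeroOG} already gives $e_{I_2J_2K_2}\neq0$ and $c_{I_0J_0K_0}\neq0$, so by Corollary~\ref{cor:face1} it suffices to show that the faces $\Face_2$ and $\Face_0$ of $\lr$ of the two smaller groups (the isotropic group in $2r$ variables of \eqref{eq:SS9}, and $\GL_{2n+1-r}$) meet the strictly dominant chambers. Using Theorem~\ref{th:eigen1} pick a strictly dominant $(\lambda,\mu,\nu)$ in the face $\Face(r,I,J,K)$ of $\lr(\SO_{2n+1})$; a direct computation with the conventions $\varepsilon_{\overline i}=-\varepsilon_i$, $\varepsilon_{n+1}=0$ shows that the linear equation of this face is $\sum_{i\in I}\lambda_i+\sum_{j\in J}\mu_j+\sum_{k\in K}\nu_k=0$. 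Transport $(\lambda,\mu,\nu)$ by $\rho_{IJK}$ into $\lr(L_r)\simeq\Gamma(L_r\cap U)$ (Theorems~\ref{th:facedom} and \ref{th:KN}) and realise it by matrices $A,B,C\in\Lie(L_r\cap U)$ with $A+B+C=0$, each block-diagonal with a $\GL_r$-block ($A_1,B_1,C_1$), an orthogonal block ($A_2,B_2,C_2$), and the contragredient $\GL_r$-block. Truncating to the $\GL_r$-block together with its contragredient block (and the single central zero slot) produces three elements of $\Lie$ of the small isotropic group with sum zero whose spectra, ordered as prescribed by $w_{I_2},w_{J_2},w_{K_2}$, give a regular point of $\Face_2$; truncating instead to $\mathrm{diag}(A_1,A_2)$, $\mathrm{diag}(B_1,B_2)$, $\mathrm{diag}(C_1,C_2)$ gives a regular point of $\Face_0$. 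Corollary~\ref{cor:face1} then yields $e_{I_2J_2K_2}=1$ and $c_{I_0J_0K_0}=1$.

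For $(\ref{ass:OG})\Leftrightarrow(\ref{ass:OGLR})$: since $c_{I_0J_0K_0}$ occurs on both sides, only the equivalence $e_{I_2J_2K_2}=1\iff c_{I_2J_2K_2}=1$ for the isotropic Grassmannian of \eqref{eq:SS9} remains, and it is proved exactly as the corresponding step of Theorem~\ref{th:ppalIG}: in one direction, $c_{I_2J_2K_2}=1$ together with Sottile's transversality theorem \cite{Sottile:transv} forces general translates, by elements of the small group, of the three Schubert varieties of $\Gr(r,2r)$ to meet transversally in a single isotropic point; in the other direction one realises a regular point of the face by saturating the trace inequality via Theorems~\ref{th:eigen1} and \ref{th:KN}. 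For $(\ref{ass:OGLR})\Rightarrow(\ref{ass:OGBK})$: by the previous paragraph $(\ref{ass:OG})$ holds, so $e_{I_2J_2K_2}\neq0$ and $c_{I_0J_0K_0}\neq0$, whence $e^\kbprod_{IJK}\neq0$ by Proposition~\ref{prop:nonzeroOG}; it remains to see that $\Face(r,I,J,K)\subset\lr(\SO_{2n+1})$ contains regular points. The triple $((I,\overline I^c),(J,\overline J^c),(K,\overline K^c))$ in $\Schub(\Fl_{2n+1}(r,2n+1-r))$ is Levi-movable, and by Richmond's recursion for structure constants of two-step flag manifolds (\cite{Richmond:recursion,Rich:mult,multi}) together with Theorem~\ref{th:prod} its $\kbprod$-structure constant equals $c_{I_0J_0K_0}\cdot c_{I_2J_2K_2}=1$; by \cite{GITEigen2}, saturating the two inequalities $\varphi_{IJK}$ and $\varphi_{\overline{I}^c\,\overline{J}^c\,\overline{K}^c}$ produces a codimension-two face $\Face$ of $\lr(\SL_{2n+1})$ meeting the strictly dominant chamber. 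Since $\lr(\SO_{2n+1})$ is the set of points of $\lr(\SL_{2n+1})$ fixed by the duality involution $(\theta,\theta,\theta)$ (\cite[Theorem~1]{BK:typeBC}) and $\varphi_{IJK}\circ(\theta,\theta,\theta)=\varphi_{\overline{I}^c\,\overline{J}^c\,\overline{K}^c}$, the face $\Face$ is $(\theta,\theta,\theta)$-stable and by convexity contains regular $\theta$-fixed points, which lie in $\Face(r,I,J,K)$; hence $e^\kbprod_{IJK}=1$.

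The main obstacle, exactly as for Theorem~\ref{th:ppalIG}, is the first implication: one must verify that the truncated matrices genuinely lie in the Lie algebra of the intended small isotropic group and, more delicately, that the spectra read off there are still \emph{strictly} dominant --- equivalently, that the orderings prescribed by $w_{I_2},w_{J_2},w_{K_2}$ (respectively $w_{I_0},\dots$) are compatible with merely restricting $\lambda,\mu,\nu$. This is precisely where the explicit combinatorial description of $\Lambda_I$ furnished by \eqref{eq:SS8} and Proposition~\ref{prop:lambdaSO} is needed, and where type $B$ differs slightly from type $C$: the odd central slot $\varepsilon_{n+1}=0$ of $\SO_{2n+1}$ has to be carried along without spoiling regularity. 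A secondary point is to pin down the identification of the ``$I_2$''-part with the Grassmannian of \eqref{eq:SS9} so that \cite{Sottile:transv} and Theorem~\ref{th:prod} apply verbatim.
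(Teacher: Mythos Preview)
Your proposal is correct and follows exactly the route the paper intends: the paper's own proof of Theorem~\ref{th:ppalSO} is the single sentence ``The proof which is similar to those of Theorem~\ref{th:ppalIG} is left to the reader,'' and what you have written is precisely that transposition, with the same cycle of implications, the same use of Proposition~\ref{prop:nonzeroOG} in place of Proposition~\ref{prop:nonzeroIG}, the same matrix truncation argument through Theorems~\ref{th:eigen1}, \ref{th:facedom}, \ref{th:KN} and Corollary~\ref{cor:face1}, the same appeal to \cite{Sottile:transv} for the equivalence on the $I_2$-part, and the same endgame via \cite{Richmond:recursion}, Theorem~\ref{th:prod}, \cite{GITEigen2} and the $(\theta,\theta,\theta)$-stability from \cite{BK:typeBC}. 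The two caveats you flag (carrying the central slot $\varepsilon_{n+1}=0$ through the regularity check, and pinning down the identification behind \eqref{eq:SS9}) are exactly the bookkeeping differences between types $B$ and $C$ and do not affect the argument.
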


\begin{proof}
  The proof which is similar to those of Theorem~\ref{th:ppalIG} is left to the reader. 
\end{proof}

 \bibliographystyle{amsalpha}
\bibliography{algo}

\providecommand{\bysame}{\leavevmode\hbox to3em{\hrulefill}\thinspace}
\providecommand{\MR}{\relax\ifhmode\unskip\space\fi MR }
\providecommand{\MRhref}[2]{%
  \href{http://www.ams.org/mathscinet-getitem?mr=#1}{#2}
}
\providecommand{\href}[2]{#2}
\begin{thebibliography}{MWZ99}

\bibitem[Bel01]{Belk:P1}
Prakash Belkale, \emph{Local systems on {$\Bbb P\sp 1-S$} for {$S$} a finite
  set}, Compositio Math. \textbf{129} (2001), no.~1, 67--86.

\bibitem[Bel06]{Bel:saturation}
\bysame, \emph{Geometric proofs of {H}orn and saturation conjectures}, J.
  Algebraic Geom. \textbf{15} (2006), no.~1, 133--173.

\bibitem[Bel07]{belkale:geomHorn}
\bysame, \emph{Geometric proof of a conjecture of {F}ulton}, Adv. Math.
  \textbf{216} (2007), no.~1, 346--357.

\bibitem[BK06]{BK}
Prakash Belkale and Shrawan Kumar, \emph{Eigenvalue problem and a new product
  in cohomology of flag varieties}, Invent. Math. \textbf{166} (2006), no.~1,
  185--228.

\bibitem[BK10]{BK:typeBC}
\bysame, \emph{Eigencone, saturation and horn problems for symplectic and odd
  orthogonal groups}, J. Algebraic Geom. \textbf{19} (2010), no.~2, 199--242.

\bibitem[BKR10]{BKR}
Prakash Belkale, Shrawan Kumar, and Nicolas Ressayre, \emph{A generalization of
  fulton's conjecture for arbitrary groups}, Preprint (2010), 1--20.

\bibitem[Bou02]{Bou}
Nicolas Bourbaki, \emph{Lie groups and {L}ie algebras. {C}hapters 4--6},
  Elements of Mathematics (Berlin), Springer-Verlag, Berlin, 2002, Translated
  from the 1968 French original by Andrew Pressley.

\bibitem[Bri86]{Br:BGHfini}
M.~Brion, \emph{Quelques propri\'et\'es des espaces homog\`enes sph\'eriques},
  Manuscripta Math. \textbf{55} (1986), no.~2, 191--198.

\bibitem[Buc]{qcalc}
Anders Buch, \emph{Quantum calculator --- a software maple package},
  {A}vailable at {\tt www.math.rutgers.edu/~asbuch/qcalc}.

\bibitem[DH98]{DH}
Igor~V. Dolgachev and Yi~Hu, \emph{Variation of geometric invariant theory
  quotients}, Inst. Hautes \'Etudes Sci. Publ. Math. \textbf{87} (1998), 5--56,
  With an appendix by Nicolas Ressayre.

\bibitem[DW02]{DW:algo}
Harm Derksen and Jerzy Weyman, \emph{On the canonical decomposition of quiver
  representations}, Compositio Math. \textbf{133} (2002), no.~3, 245--265.

\bibitem[DW10]{DW:comb}
Harm Derksen and Jerzy Weyman, \emph{The combinatorics of quiver
  representations}, Ann. Inst. Fourier (to appear) (2010), 1--62.

\bibitem[Ful00]{Fulton:survey}
William Fulton, \emph{Eigenvalues, invariant factors, highest weights, and
  {S}chubert calculus}, Bull. Amer. Math. Soc. (N.S.) \textbf{37} (2000),
  no.~3, 209--249.

\bibitem[Hor62]{Horn:conj}
Alfred Horn, \emph{Eigenvalues of sums of {H}ermitian matrices}, Pacific J.
  Math. \textbf{12} (1962), 225--241.

\bibitem[Kac82]{Kac:carquois2}
V.~G. Kac, \emph{Infinite root systems, representations of graphs and invariant
  theory. {II}}, J. Algebra \textbf{78} (1982), no.~1, 141--162.

\bibitem[Kly98]{Kly:stable}
Alexander~A. Klyachko, \emph{Stable bundles, representation theory and
  {H}ermitian operators}, Selecta Math. (N.S.) \textbf{4} (1998), no.~3,
  419--445.

\bibitem[Kos61]{Kostant:harmform1}
Bertram Kostant, \emph{Lie algebra cohomology and the generalized
  {B}orel-{W}eil theorem}, Ann. of Math. (2) \textbf{74} (1961), 329--387.

\bibitem[KP10]{KnPu:product}
Allen Knutson and Kevin Purbhoo, \emph{Product and puzzle formulae for {${\rm
  GL}_n$} belkale-kumar coefficients}.

\bibitem[KT99]{KT:saturation}
Allen Knutson and Terence Tao, \emph{The honeycomb model of {${\rm GL}\sb
  n({\bf C})$} tensor products. {I}. {P}roof of the saturation conjecture}, J.
  Amer. Math. Soc. \textbf{12} (1999), no.~4, 1055--1090.

\bibitem[KTT09]{KTT:factorLR}
Ronald~C. King, Christophe Tollu, and Fr{\'e}d{\'e}ric Toumazet,
  \emph{Factorisation of {L}ittlewood-{R}ichardson coefficients}, J. Combin.
  Theory Ser. A \textbf{116} (2009), no.~2, 314--333.

\bibitem[KTW04]{KTW}
Allen Knutson, Terence Tao, and Christopher Woodward, \emph{The honeycomb model
  of {${\rm GL}\sb n(\Bbb C)$} tensor products. {II}. {P}uzzles determine
  facets of the {L}ittlewood-{R}ichardson cone}, J. Amer. Math. Soc.
  \textbf{17} (2004), no.~1, 19--48.

\bibitem[Les47]{Lesieur}
L{\'e}once Lesieur, \emph{Les probl\`emes d'intersection sur une vari\'et\'e de
  {G}rassmann}, C. R. Acad. Sci. Paris \textbf{225} (1947), 916--917.

\bibitem[MFK94]{GIT}
D.~Mumford, J.~Fogarty, and F.~Kirwan, \emph{Geometric invariant theory}, 3d
  ed., Springer Verlag, New York, 1994.

\bibitem[MWZ99]{MWZ}
Peter Magyar, Jerzy Weyman, and Andrei Zelevinsky, \emph{Multiple flag
  varieties of finite type}, Adv. Math. \textbf{141} (1999), no.~1, 97--118.

\bibitem[Pop07]{Popov:gentrans}
Vladimir~L. Popov, \emph{Generically multiple transitive algebraic group
  actions}, Algebraic groups and homogeneous spaces, Tata Inst. Fund. Res.
  Stud. Math., Tata Inst. Fund. Res., Mumbai, 2007, pp.~481--523.

\bibitem[Res08a]{multi}
Nicolas Ressayre, \emph{Multiplicative formulas in {C}ohomology of {$G/P$} and
  in quiver representations}, Preprint (2008), no.~arXiv:0812.2122, 1--20.

\bibitem[Res08b]{fulton}
\bysame, \emph{A short geometric proof of a conjecture of {F}ulton}, Preprint
  (2008), no.~arXiv:0901.3633, 1--7.

\bibitem[Res10a]{GITEigen}
\bysame, \emph{Geometric invariant theory and generalized eigenvalue problem},
  Invent. Math. \textbf{180} (2010), 389--441.

\bibitem[Res10b]{GITEigen2}
\bysame, \emph{Geometric invariant theory and generalized eigenvalue problem
  {II}}, Ann. Inst. Fourier (to appear) (2010), no.~arXiv:0903.1187, 1--25.

\bibitem[Res10c]{carquois}
\bysame, \emph{{GIT}-cones and quivers}, Math. Z. \textbf{To appear} (2010),
  no.~arXiv:0903.1202, 1--15.

\bibitem[Res10d]{ReductionRule}
\bysame, \emph{Reductions for branching rules}, Incoming, 2010.

\bibitem[Ric08]{Rich:mult}
Edward Richmond, \emph{A multiplicative formula for structure constants in the
  cohomology of flag varieties}, 2008.

\bibitem[Ric09]{Richmond:recursion}
Edward Richmond, \emph{A partial horn recursion in the cohomology of flag
  varieties}, Journal of Algebraic Combinatorics (2009), 1--15.

\bibitem[Rot10]{Roth:red}
Mike Roth, \emph{Reduction rules for littlewood-richardson coefficients}.

\bibitem[RR10]{RR}
Nicolas Ressayre and Edward Richmond, \emph{A functorial property of
  {B}elkale-{K}umar's product}, Proc. Amer. Math. Soc. \textbf{To appear}
  (2010), no.~arXiv:, 1--14.

\bibitem[Sot10]{Sottile:transv}
Frank Sottile, \emph{General isotropic flags are general (for grassmannian
  schubert calculus)}, J. Algebraic Geom. \textbf{1} (2010), no.~1, 1--3.

\end{thebibliography}

\begin{center}
  -\hspace{1em}$\diamondsuit$\hspace{1em}-
\end{center}

\end{document}